\documentclass[12pt]{amsart}
\usepackage{amssymb}
\usepackage[mathscr]{eucal}
\usepackage{epsf}
\usepackage{epsfig}
\usepackage{color}
\DeclareGraphicsExtensions{.pstex,.eps,.epsi,.ps}


\vfuzz2pt 

 \newtheorem{thm}{Theorem}[section]
 \newtheorem{cor}[thm]{Corollary}
 \newtheorem{lem}[thm]{Lemma}
 \newtheorem{prop}[thm]{Proposition}
 \theoremstyle{definition}
 
 \theoremstyle{remark}
 \newtheorem{rem}[thm]{Remark}
 \newtheorem{prob}[thm]{Problem}


\newcommand{\Real}{\mathbb{R}}

\newcommand{\ver}{\mathcal{V}}
\newcommand{\hor}{\mathcal{H}}
\newcommand{\ric}{\text{Rc}}
\newcommand{\vol}{\text{vol}}

\newcommand{\tr}{\textbf{tr}}
\newcommand{\hess}{\nabla^2}

\newcommand{\E}{\mathcal E}
\newcommand{\ham}{\mathbf H}

\newcommand{\x}{\mathbf x}
\newcommand{\R}{\mathbf R}
\newcommand{\p}{\mathbf p}
\newcommand{\diver}{\mathbf{div}}
\newcommand{\lag}{\mathbf L}
\newcommand{\F}{\mathbf F}
\newcommand{\cF}{\mathcal F}
\newcommand{\m}{\mathfrak m}
\newcommand{\f}{\mathbf f}

\newcommand{\cP}{\mathcal P}
\newcommand{\K}{\mathbf K}
\newcommand{\leg}{\mathbb L}
\newcommand{\ve}{\mathbf v}
\newcommand{\w}{\mathbf w}

\newcommand{\marginnote}[1]
{
}

\begin{document}

\title[Displacement interpolations]{Displacement interpolations from a Hamiltonian point of view}
\author{Paul W.Y. Lee}
\email{wylee@math.cuhk.edu.hk}
\address{Room 216, Lady Shaw Building, The Chinese University of Hong Kong, Shatin, Hong Kong}

\date{\today}

\maketitle

\begin{abstract}
One of the most well-known results in the theory of optimal transportation is the equivalence between the convexity of the entropy functional with respect to the Riemannian Wasserstein metric and the Ricci curvature lower bound of the underlying Riemannian manifold. There are also generalizations of this result to the Finsler manifolds and manifolds with a Ricci flow  background. In this paper, we study displacement interpolations from the point of view of Hamiltonian systems and give a unifying approach to the above mentioned results.
\end{abstract}


\section{Introduction}

Due to its connections with numerous areas in mathematics, the theory of optimal transportation has gained much popularity in recent years. In this paper, we will focus on the, so called, displacement convexity in the theory of optimal transportation and its connections with Ricci curvature lower bounds and the Ricci flow.

Let $M$ be a manifold. Recall that the optimal transportation problem corresponding to the cost function $c:M\times M\to\Real$ is the following minimization problem:
\begin{prob}\label{prob:OT}
Let $\mu$ and $\nu$ be two Borel probability measures on $M$. Minimize the following total cost
\[
\int_Mc(x,\varphi(x))d\mu(x),
\]
among all Borel maps $\varphi$ which pushes $\mu$ forward to $\nu$: $\varphi_*\mu=\nu$. Minimizers of this problem are called optimal maps.
\end{prob}

In this paper, we are interested in cost functions defined by minimizing action functionals. More precisely, let $\lag:\Real\times TM\to\Real$ be a smooth function, called Lagrangian. The function $\lag$ defines a family of cost functions $c_T:M\times M\to\Real$ given by
\begin{equation}\label{prob:cost}
c_T(x,y)=\inf\limits_{\gamma(0)=x,\gamma(T)=y}\int_0^T \lag(s,\gamma(s),\dot\gamma(s))ds,
\end{equation}
where the infimum is taken over all $C^1$ curves $\gamma:[0,T]\to M$ joining
$x$ and $y$: $\gamma(0)=x$ and $\gamma(T)=y$.

Assume that the Lagrangian $\lag$ is fibrewise strictly convex, super-linear, and the corresponding Hamiltonian flow is complete. Then the infimum in (\ref{prob:cost}) is achieved. Moreover, if we also assume that the measure $\mu$ is absolutely continuous with respect to the Lebesgue class, then the optimal transportation problem corresponding to the cost function $c_T$ has a minimizer $\varphi_T$ which is unique $\mu$-almost-everywhere \cite{Br1,Mc2,BeBu,FaFi,Fi}.

As the time $T$ varies, the optimal transportation problem defines a one parameter family of optimal maps $\varphi_T$. This, in turn, defines a one parameter family of probability measures $\mu_t:=(\varphi_t)_*\mu$. These curves in the space of Borel probability measures, first introduced in \cite{Mc1,Mc2}, are called
McCann's displacement interpolations. In \cite{Br2}, displacement interpolations are called generalized geodesics and they were used to obtain a far reaching extension of the optimal transportation problem.

The connection between the optimal transportation problem and the Ricci curvature lies in the convexity of the classical entropy functional along displacement interpolations. More precisely, the entropy functional $\mathcal E_1:\mathcal P_{ac}\to\Real$ is defined on the space $\mathcal P_{ac}$ of Borel probability measures which are absolutely continuous with respect to the Lebesgue class by
\begin{equation}\label{ent1}
\mathcal E_1(\mu)=\int_M\rho(x)\log\rho(x) d\vol(x)
\end{equation}
where $\mu=\rho\,\vol$ and $\vol$ is the Riemannian volume form of $g$.

Let $g$ be a Riemannian metric on the manifold $M$ and let $\lag$ be the Lagrangian defined, in local coordinates, by
\begin{equation}\label{kinetic}
\lag(x,v)=\frac{1}{2}\sum_{i,j=1}^ng_{ij}v_iv_j
\end{equation}
where $g_{ij}=g(\partial_{x_i},\partial_{x_j})$.

It was shown in \cite{OtVi,CoMcSc1,StVo} that the functional $\E_1$ is convex along any displacement interpolation defined by the cost function (\ref{prob:cost}) with Lagrangian (\ref{kinetic}) if and only if the Ricci curvature defined by the Riemannian metric $g$ is non-negative.

Instead of the entropy functional $\E_1$, one can also consider the relative entropy $\E_2$ defined by
\begin{equation}\label{ent2}
\mathcal E_2(\mu)=\int_M\rho(x)\log\rho(x) e^{-\mathfrak U(x)}d\mu(x),
\end{equation}
where $\mathfrak U:M\to\Real$ is a smooth function. It was shown in \cite{OtVi,CoMcSc2,St} that the entropy $\E_2$ is convex along any displacement interpolation defined by the cost function (\ref{prob:cost}) with Lagrangian (\ref{kinetic}) if and only if the Bakry-Emery tensor $\ric+\hess \mathfrak U$ is non-negative.

The above results were also generalized to the Finsler case by \cite{Oh}. In this case, (\ref{kinetic}) is replaced by Lagrangians $\lag=\lag(t,x,v)$ which are homogeneous of degree two in the $v$-variable. There are also various generalizations of the above results to manifolds with a Ricci flow background \cite{McTo,To,Lo,Bre}.

The Euler-Lagrange equation of the action functional (\ref{prob:cost}) is given by a Hamiltonian system on the cotangent bundle $T^*M$ of the manifold $M$. In this paper, we study the displacement interpolations from the point of view of this Hamiltonian structure. By using the curvature of Hamiltonian systems introduced in \cite{AgGa}, we prove a differential inequality (Theorem \ref{main}) which recover, unify, and generalize the above mentioned results on the Ricci curvature and the Ricci flow.

The structure of this paper is as follows. In section \ref{MR}, the backgrounds and the statements of the main results in this paper are introduced. In section \ref{COHS}, we recall the definitions and properties of curvatures for Hamiltonian systems. In section \ref{OPAHJE}, we recall several notions in the optimal transportation theory and its connections with Hamiltonian systems and the Hamilton-Jacobi equation. In section \ref{NMH} and \ref{NMHWERM}, we compute the curvature of some Hamiltonian systems studied in this paper. The rest of the sections are devoted to the proofs of the results.

\section*{Acknowledgment}

After a major revision of this paper, the author received the following message from Professor Juan-Carlos Alvarez Paiva ``Agrachev's construction of curvature for convex Hamiltonian systems is completely equivalent (although not obviously so) to the constructions of Griffone \cite{Gr} and Foulon \cite{Fo}, while the analysis of the geometry of curves of Lagrangian subspaces and its relation to curvature is due to S. Ahdout \cite{Ah}.'' I would like to thank him for his comments.

\smallskip

\section{Main Results}\label{MR}

In this section, we give the background and statements of the main results. For this, let us first discuss the minimization problem in (\ref{prob:cost}). For simplicity, we assume that the manifold $M$ is compact. In order to ensure the existence of minimizers in (\ref{prob:cost}), we make the following assumptions on the Lagrangian $\lag$ throughout this paper without mentioning.
\begin{itemize}
\item $\lag$ is fibrewise uniformly convex (i.e. $v\mapsto \lag(t,x,v)$ has positive definite Hessian for each time $t$ in the interval $[0,T]$ and each point $x$ in $M$),
\item $\lag$ is super-linear (i.e. there is a Riemannian metric $|\cdot|$ and positive constants $C_1$ and $C_2$ such that $\lag(t,x,v)\geq C_1|v|-C_2$ for all time $t$ in $[0,T]$ and all tangent vector $(x,v)$ in $TM$).
\end{itemize}
Under the above assumptions, the minimizers of (\ref{prob:cost}) exist. Moreover, under additional assumption, they can be described as follows.

Let $\ham:\Real\times T^*M\to\Real$ be the Hamiltonian defined by the Legendre transform of $\lag$
\[
\ham(t,x,p)=\sup_{v\in T_xM}\left[p(v)-\lag(t,x,v)\right].
\]

Let $\vec\ham$ be the Hamiltonian vector field of $\ham$ defined on the cotangent bundle $T^*M$ by
\[
\vec\ham =\sum_{i=1}^n\left(\ham_{p_i}\partial_{x_i}-\ham_{x_i}\partial_{p_i}\right).
\]
We also make the following assumption throughout this paper without mentioning.
\begin{itemize}
\item the time dependent flow $\Phi_{t,s}$ of $\vec\ham$ defined by $\frac{d}{dt}\Phi_{t,s}=\vec\ham(\Phi_{t,s})$ and $\Phi_{s,s}(x,p)=(x,p)$ is complete.
\end{itemize}

We also set $\Phi_t=\Phi_{t,0}$. Under the above assumptions, the minimizers of (\ref{prob:cost}) are given by the projections $t\mapsto\pi(\Phi_t(x,p))$ of trajectories $t\mapsto\Phi_t(x,p)$ to the base $M$. Here $\pi:T^*M\to M$ is the natural projection. Moreover, if we assume that the measure $\mu$ is absolutely continuous with respect to the Lebesgue class, then Problem \ref{prob:OT} with cost given by (\ref{prob:cost}) also has a solution. More precisely, we have the following well-known result.

\begin{thm}\label{OTexist}\cite{Br1,Mc2,BeBu}
Assume that $M$ is a closed manifold and the measure $\mu$ is absolutely continuous with respect to the Lebesgue class. Then there is a solution $\varphi_T$, which is unique $\mu$-almost-everywhere, to Problem \ref{prob:OT} with cost $c_T$ given by (\ref{prob:cost}). Moreover, there is a locally semi-concave function $\f$ such that
\[
\varphi_T(x)=\pi(\Phi_T(d\f_x)).
\]
\end{thm}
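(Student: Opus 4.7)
The plan is to follow the Kantorovich duality strategy of Brenier and McCann, adapted to Lagrangian cost functions as in \cite{BeBu,FaFi}. Since $M$ is compact and $\lag$ is fibrewise uniformly convex and superlinear, the cost $c_T$ is finite, continuous, and bounded on $M\times M$. Standard Kantorovich duality then yields that the primal problem in Problem~\ref{prob:OT} has the same value as
\[
\sup\left\{\int_M \f\,d\mu+\int_M \g\,d\nu\ :\ \f(x)+\g(y)\le c_T(x,y)\right\},
\]
and maximizers exist. By iterating the $c$-transform, one may assume without loss of generality that $\f$ is $c$-concave, i.e.\ $\f(x)=\inf_y[c_T(x,y)-\g(y)]$.

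The next step is to establish local semi-concavity of $c_T(\cdot,y)$, uniformly as $y$ ranges over a compact set. Fix $(x_0,y_0)$, let $\gamma_0$ be a minimizing trajectory from $x_0$ to $y_0$ with initial momentum $p_0$, and for $x$ near $x_0$ construct a comparison curve $\gamma_x$ that deviates from $\gamma_0$ only on a small interval $[0,\delta]$ and joins $x$ to $y_0$ in time $T$. Fibrewise uniform convexity of $\lag$ and smoothness of the Hamiltonian flow yield a Taylor-type upper bound
\[
c_T(x,y_0)\le c_T(x_0,y_0)+\langle p_0,x_0-x\rangle+C\,|x-x_0|^2,
\]
with $C$ depending only on a compact neighbourhood of $(x_0,y_0)$. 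Taking infima over $y$ transfers this local semi-concavity estimate to $\f$.

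By Alexandrov's theorem, a locally semi-concave function is differentiable on a set of full Lebesgue measure, hence $\mu$-almost everywhere under the absolute continuity hypothesis on $\mu$. At a differentiability point $x$ of $\f$, choose $y$ with $\f(x)+\g(y)=c_T(x,y)$. Then $x'\mapsto c_T(x',y)-\f(x')$ attains its minimum at $x$, which forces $d\f_x=d_x c_T(x,y)$. The envelope formula for the value function of a Lagrangian action identifies $d_x c_T(x,y)$ with the initial momentum of the unique minimizing trajectory from $x$ to $y$, up to the sign convention chosen in the $c$-transform. Consequently $y=\pi(\Phi_T(d\f_x))$, which defines $\varphi_T$ unambiguously at each differentiability point of $\f$, gives measurability via the Borel regularity of $\f$, and yields uniqueness $\mu$-a.e.\ automatically. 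The push-forward property follows from the fact that any optimal transport plan concentrates on the $c$-superdifferential of $\f$.

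The main obstacle is the uniform semi-concavity estimate: near the cut locus of the Lagrangian flow, minimizers from $x$ to $y$ may fail to be unique and $y\mapsto c_T(x,y)$ need not be smooth, so the comparison construction cannot depend on smoothness of the cost itself. The remedy, which I would adopt, is to bound initial momenta a priori using superlinearity, ensuring the relevant trajectories lie in a compact subset of $T^*M$, and to carry out the perturbation purely at the level of the Hamiltonian flow, which \emph{is} smooth. Once this estimate is in place, the rest of the proof is the now-standard dual-variable machinery.
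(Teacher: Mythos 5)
Theorem~\ref{OTexist} is cited in the paper (to Brenier, McCann, Bernard--Buffoni) and not reproved there, so there is no internal argument to compare against; your sketch is a rendering of the standard argument from those sources, and its overall structure (Kantorovich duality, $c$-transform, local semi-concavity of $c_T(\cdot,y)$, a.e.\ differentiability of the potential, identification of the map via the Hamiltonian flow) is the right one.

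One sign issue needs to be resolved rather than waved at. The first-variation computation along a minimizer $\gamma$ from $x$ to $y$ gives $d_x c_T(x,y)=-p_0$, where $p_0=\lag_v(0,x,\dot\gamma(0))$ is the initial momentum; this is a definite fact, not something absorbed by ``the sign convention chosen in the $c$-transform.'' With your choice $\f(x)=\inf_y[c_T(x,y)-\g(y)]$ you correctly obtain $d\f_x=d_xc_T(x,y)$, hence $p_0=-d\f_x$ and $\varphi_T(x)=\pi(\Phi_T(-d\f_x))$, which is not the formula in the statement. To produce $\varphi_T(x)=\pi(\Phi_T(d\f_x))$ as written, you must instead take $\f$ to be the \emph{negative} of your Kantorovich potential (equivalently, use the transform $\f(x)=\sup_y[\g(y)-c_T(x,y)]$), under which $\f$ is a supremum of uniformly semi-\emph{convex} functions. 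The statement's ``locally semi-concave'' is in tension with its own formula under the usual convention, so in any case you should fix one convention and trace the sign through $d\f_x\to p_0\to\Phi_T$ explicitly; as it stands the sketch derives a formula with the wrong sign and then asserts agreement. Two smaller points: Alexandrov's theorem is unnecessary, since first-order a.e.\ differentiability already follows from local Lipschitz continuity (a consequence of semi-concavity/semi-convexity) and Rademacher; and the passage from ``the optimal plan concentrates on the $c$-subdifferential of $\f$, which is $\mu$-a.e.\ single-valued'' to the conclusion that $(\mathrm{id}\times\varphi_T)_\#\mu$ is the unique optimal plan and $\varphi_T$ the $\mu$-a.e.\ unique optimal map deserves a sentence of its own, since it is exactly where both existence and uniqueness are concluded.
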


The function $\f$ in Theorem \ref{OTexist} is the potential of the optimal map $\varphi_T$. For each such potential $\f$, one can define $\varphi_t$, $0<t<T$, by
\[
\varphi_t(x)=\pi(\Phi_t(d\f_x)).
\]
Then $\varphi_t$ is the optimal map between the measures $\mu$ and
\begin{equation}\label{disint}
\mu_t:=(\varphi_t)_*\mu
\end{equation}
for the cost $c_t$.
The family of measures $\mu_t$ defined in (\ref{disint}) is called a displacement interpolation corresponding to the cost (\ref{prob:cost}). It was first introduced by \cite{Mc1,Mc2} in the Euclidean setting and it turns out to be the key to various connections of the optimal transportation problem with differential geometry. For instance, in the Riemannian case, the Ricci curvature of a manifold being non-negative is equivalent to the convexity of the entropy functional (\ref{ent1}) along displacement interpolations corresponding to the cost function given by the square of the Riemannian distance.

Let $\cP_{ac}$ be the space of all Borel probability measures which are absolutely continuous with respect to the Lebesgue class. In this paper, we mainly focus on functionals of the following form. Let $\m_t$ be a family of volume forms on $M$ which vary smoothly in $t$. We will denote the measure induced by $\m_t$ using the same symbol. Let us fix a function $\cF:\Real\to\Real$. The measures $\m_t$ and the function $\cF$ define a functional $\F$ on the space $[0,T]\times \cP_{ac}$ by
\begin{equation}\label{fnal}
\F(t,\mu)=\int_M\cF(\rho(t,x))d\mu(x),
\end{equation}
where $\mu=\rho(t,\cdot)\m_t$.

We show that the monotonicity and convexity properties of the functional defined in (\ref{fnal}) are related to the curvature $\R$ and the volume distortion $v$ of the Hamiltonian system $\vec\ham$. Here we give a brief introduction to these two concepts. A detail discussion can be found in Section \ref{COHS} and \ref{POT}.

Recall that the cotangent bundle $T^*M$ of the manifold $M$ is equipped with a symplectic form $\omega=\sum_{i=1}^ndp_i\wedge dx_i$ and the flow of the time-dependent Hamiltonian vector field $\vec\ham$ is denoted by $\Phi_{t,s}$. Let $\pi:T^*M\to M$ be the natural projection and let $\ver$ be the vertical bundle defined as the kernel of the map $d\pi$. For each point $(x,p)$ in the cotangent bundle $T^*M$ of the manifold $M$, one can define a moving frame $E^{t,s}:=(e_1^{t,s},...,e_n^{t,s})^T$ and $F^{t,s}:=(f_1^{t,s},...,f_n^{t,s})^T$, called a canonical frame, of the symplectic vector space $T_{(x,p)}T^*M$ such that the following conditions are satisfied for each $i$ and $j$:
\begin{itemize}
\item $e_i^{t,t}$ is contained in the vertical space $\ver_{(x,p)}$ for each time $t$,
\item $\dot e_i^{t,s}=f_i^{t,s}$,
\item $\ddot e_i^{t,s}$ is also contained in $\ver_{(x,p)}$,
\item the canonical frame $E^{t,s}, F^{t,s}$ is a Darboux basis for each $(t,s)$ (i.e. $\omega(f_i^{t,s},e_j^{t,s})=\delta_{ij}$, $\omega(f_i^{t,s},f_j^{t,s})=\omega(e_i^{t,s},e_j^{t,s})=0$).
\end{itemize}
Here and throughout this paper, we use the following notational convention: dot always denotes the derivative with respect to time $t$, $s$, or $\tau$. For instance $\dot V^t=\frac{d}{dt} V^t$. In the case of the canonical frame, both $E^{t,s}$ and $F^{t,s}$ contain two time parameters $t$ and $s$. In this case, dot denotes the derivative with respect to the first parameter. For instance $\dot e_i^{t,s}=\frac{d}{dt} e_i^{t,s}$ and $\ddot e_i^{t,s}=\frac{d^2}{dt^2}e_i^{t,s}$.

The curvature operator $\R^t_{(x,p)}:\ver_{(x,p)}\to\ver_{(x,p)}$ of the Hamiltonian system $\vec\ham$ is the linear operator defined by
\[
\R^t_{(x,p)}(e_i^{t,t})=-\ddot e_i^{t,t}=-\dot f_i^{t,t}.
\]
Finally, the volume distortion $v:\Real\times T^*M\to\Real$ compares the volume forms $\m_t$ with the frame $F^{t,t}$. More precisely,
\[
v(t,x,p):=\log\left(\pi^*\m_t(f_1^{t,t},...,f_n^{t,t})\right).
\]

Curvature of Hamiltonian systems was first introduced and studied in \cite{AgGa}. It serves as a Hamiltonian analogue of the curvature operator in Riemannian geometry. More precisely, let us fix a Riemannian metric $g$ and let $\ham$ be the kinetic energy given in a local coordinate chart by
\begin{equation}\label{kin}
\ham(x,p)=\sum_{i,j=1}^n\frac{1}{2}g^{ij}(x)p_ip_j
\end{equation}
where $g^{ij}$ is the inverse matrix of $g_{ij}=g(\partial_{x_i},\partial_{x_j})$. In this case, the curvature operator $\R^t$ is independent of time $t$ and it is, under some identifications using the Riemannian metric $g$, the curvature operator. Suppose that $\m_t=e^{-\mathfrak U}\vol$, where  $\mathfrak U:M\to\Real$ is a smooth function and $\vol$ is the Riemannian volume form of $g$. Then the volume distortion is simply $v=-\pi^*\mathfrak U$.

Next, we consider a particular case of the functional $\F$ with $\cF(r)=\log(r)$. In this case, the functional $\F$ is a time-dependent version of the relative entropy (\ref{ent2}) and we obtain the following theorem which unify several results in the literature.

\begin{thm}\label{main}
Assume that $\cF(r)=\log(r)$ and let $b$ be any function of time $t$. Then the functional $\F$ satisfies the following inequality:
\begin{equation}\label{conclude}
\begin{split}
&\frac{d^2}{dt^2}\F(t,\mu_t)+b(t)\frac{d}{dt}\F(t,\mu_t)+\frac{nb(t)^2}{4}\geq \int_M\Big(\tr(\R^t_{\Phi_t(\x,d\f)})\\ &-\frac{d^2}{dt^2}v(t,\Phi_t(\x,d\f)) -b(t)\frac{d}{dt}v(t,\Phi_t(\x,d\f))\Big) d\mu(\x)
\end{split}
\end{equation}
for each smooth displacement interpolation $\mu_t$ corresponding to the cost (\ref{prob:cost}).
\end{thm}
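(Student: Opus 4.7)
My plan is to reduce the second-order inequality for $\F(t,\mu_t)$ to a pointwise matrix Riccati estimate along each Hamiltonian trajectory. First, by the change-of-variables formula, if $J_t(x)$ is the Jacobian defined by $(\varphi_t^*\m_t)_x=J_t(x)(\m_0)_x$, then the pushforward identity $\rho_t(\varphi_t(x))\,J_t(x)=\rho_0(x)$ gives
\begin{equation*}
\F(t,\mu_t)=\int_M\log\rho_0(x)\,d\mu(x)-\int_M\log J_t(x)\,d\mu(x),
\end{equation*}
so all time derivatives of $\F(t,\mu_t)$ reduce to time derivatives of $-\int\log J_t\,d\mu$.

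Second, I would factor $\log J_t$ using the canonical frame. Setting $\Psi_t=\Phi_t\circ d\f$, the image $d\Psi_t(T_xM)$ is a Lagrangian subspace of $T_{\Phi_t(d\f_x)}T^*M$, which expands in the canonical frame $(E^{t,t},F^{t,t})$ as $d\Psi_t V_i=\sum_j B_{ji}(t,x)f_j^{t,t}+\sum_j A_{ji}(t,x)e_j^{t,t}$, with the Lagrangian condition forcing $B^TA=A^TB$ and the canonical-frame evolution yielding $A=\dot B$ together with the matrix Jacobi equation $\ddot B+\R^t B=0$. Since the vertical part of $d\Psi_t V_i$ is annihilated by $d\pi$, evaluating $\m_t$ against $d\varphi_t V_i=\sum_j B_{ji}\,d\pi(f_j^{t,t})$ (with $V_1,\ldots,V_n$ normalised so that $\m_0(V_1,\ldots,V_n)=1$) and using the definition of $v$ as the logarithm of the $\pi^*\m_t$-value on $F^{t,t}$ yields the splitting
\begin{equation*}
\log J_t(x)=\log\det B(t,x)+v(t,\Phi_t(d\f_x)).
\end{equation*}

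Third, set $U=\dot B\,B^{-1}$; the Lagrangian condition makes $U$ symmetric, and the Jacobi equation becomes the Riccati equation $\dot U+U^2+\R^t=0$. Taking traces, $(\log\det B)'=\tr U$ and $(\log\det B)''=-\tr\R^t-\tr(U^2)$. By symmetry of $U$, the Cauchy--Schwarz inequality gives $\tr(U^2)\ge(\tr U)^2/n$, and completing the square via $\tfrac{1}{n}\bigl(\tr U-\tfrac{nb(t)}{2}\bigr)^2\ge 0$ produces the pointwise bound
\begin{equation*}
(\log\det B)''+b(t)\,(\log\det B)'-\tfrac{nb(t)^2}{4}\le -\tr\R^t.
\end{equation*}
Combining with the factorisation of $\log J_t$, integrating against the probability measure $d\mu$, and converting back to derivatives of $\F$ via the first paragraph delivers the inequality in the statement.

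The principal obstacle is the second step: rigorously identifying the split of $\log J_t$ into the curvature-driven piece $\log\det B$ and the measure-driven piece $v$, and verifying that the horizontal matrix $B(t,x)$ is governed by precisely the matrix Jacobi equation tied to the curvature operator $\R^t$ of Section \ref{COHS}. This requires careful bookkeeping within the two-parameter canonical frame and with the evolution of the Lagrangian subspace $d\Psi_t(T_xM)$ under $\Phi_t$. Once that identification is in hand, everything else is a standard Riccati/Cauchy--Schwarz argument, refined by the presence of the free function $b(t)$.
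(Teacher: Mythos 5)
Your argument reproduces the paper's proof in all essential respects: the Jacobian factorization of $\log J_t$ via $\det B$ and the volume distortion $v$, the matrix Riccati equation derived from the canonical-frame Jacobi equation, and the completion of the square against $b(t)$ are precisely the ingredients the paper assembles in Section~\ref{OPAHJE}, Lemma~\ref{Fpp}, and the subsequent proof of Theorem~\ref{main}. One sign typo to correct: the canonical-frame relation should read $A=-\dot B$ rather than $A=\dot B$ (compare $\dot B=-A$ after (\ref{S})), but this does not propagate because your Jacobi equation $\ddot B+\R^tB=0$ and Riccati equation for $U=\dot B\,B^{-1}$ are stated correctly, and $U$ is similar to the paper's $S=-B^{-1}A$, so $\tr U=\tr S$ and $\tr(U^2)=\tr(S^2)$.
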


In Theorem \ref{main}, we assume that $\mu_t$ is a smooth displacement interpolation (see Section \ref{OPAHJE} for the definition). This allows us to focus on the ideas of the proof and avoids technical difficulties that arise due to the lack of regularity of the potential $\f$. These technicalities can be dealt with along the lines in \cite{CoMcSc1,To} and most of them follows from the general results in \cite{Vi2}. The details will be reported elsewhere.

By specializing Theorem \ref{main} to the Riemannian case, we recover several well-known results. More precisely, let $\ham$ be the kinetic energy defined by (\ref{kin}). Let $b\equiv 0$ and $\m_t=e^{-\mathfrak U}\vol$. Then the functional $\F$ defined in (\ref{fnal}) coincides with the relative entropy $\E_2$ defined in (\ref{ent2}) and we recover the following result which appeared in \cite{OtVi,CoMcSc2,StVo,St} (see section \ref{POT} for some partial results for more general Hamiltonian systems and also section \ref{finsler} for a discussion on the Finsler case).

\begin{cor}\label{cor1}
Let $g$ be a Riemannian metric with Ricci curvature $\ric$. Then the Bakry-Emery tensor $\ric+\hess \mathfrak U$ satisfies the following condition
\begin{equation}\label{BE}
\ric+\hess \mathfrak U\geq Kg
\end{equation}
if and only if
\begin{equation}\label{CD}
\frac{d^2}{dt^2}\E_2(\mu_t)\geq \frac{2K}{T} C_T(\mu,\nu)
\end{equation}
for each smooth displacement interpolation $\mu_t$ corresponding to the cost (\ref{prob:cost}) with Lagrangian defined by (\ref{kinetic}). Here $\hess\mathfrak U$ denotes the Hessian of the function $\mathfrak U$ with respect to the given Riemannian metric $g$. $C_T(\mu,\nu)$ is defined by the optimal transportation problem
\[
C_T(\mu,\nu):=\inf_{\varphi_*\mu=\nu}\int_Mc_T(x,\varphi(x))d\mu(x),
\]
where the infimum is taken over all Borel maps $\varphi$ which pushes $\mu$ forward to $\nu$.
\end{cor}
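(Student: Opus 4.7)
The plan is to specialize Theorem~\ref{main} to the Riemannian kinetic Hamiltonian (\ref{kin}) with $b\equiv 0$ and $\m_t = e^{-\mathfrak U}\vol$, identify the integrand on the right-hand side of (\ref{conclude}) with the Bakry-Emery tensor evaluated along the transport geodesics, and then pass between the integrated and the pointwise forms of the inequality in the two directions. The two identifications I need are as follows. A computation of the canonical frame for (\ref{kin}), carried out in Sections~\ref{NMH}--\ref{NMHWERM}, shows that, under the metric identification of $\ver_{(x,p)}$ with $T_xM$, the operator $\R^t_{(x,p)}$ is time-independent and coincides with $w\mapsto R(w,p^\sharp)p^\sharp$, where $p^\sharp$ denotes the vector metric-dual to $p$. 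Consequently
\[
\tr\!\left(\R^t_{\Phi_t(x,d\f)}\right) = \ric(\dot\gamma_x(t), \dot\gamma_x(t)),
\]
where $\gamma_x(t) = \pi(\Phi_t(d\f_x))$ is the minimizing geodesic. Since $F^{t,t}$ projects to an orthonormal frame on $M$ and $\m_t$ is time-independent, $v(t,x,p) = -\mathfrak U(x)$ and, using that $\gamma_x$ is a geodesic,
\[
\tfrac{d^2}{dt^2} v(t,\Phi_t(x,d\f)) = -\hess\mathfrak U(\dot\gamma_x, \dot\gamma_x).
\]
With $b\equiv 0$, the right-hand side of (\ref{conclude}) therefore collapses to $\int_M (\ric + \hess\mathfrak U)(\dot\gamma_x,\dot\gamma_x)\,d\mu(x)$.

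For the direction (\ref{BE}) $\Rightarrow$ (\ref{CD}), I use that the minimizers of (\ref{kinetic}) are constant-speed geodesics, so $c_T(x,\varphi_T(x)) = \tfrac{T}{2}|\dot\gamma_x(t)|^2$ for every $t$ and hence
\[
\int_M |\dot\gamma_x|^2\,d\mu = \tfrac{2}{T} C_T(\mu,\nu).
\]
Assuming (\ref{BE}) and substituting into the specialization of (\ref{conclude}) yields (\ref{CD}) immediately.

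The converse is the main difficulty. Assuming (\ref{CD}) for every smooth displacement interpolation, the task is to recover (\ref{BE}) pointwise. Given $x_0\in M$ and $v_0\in T_{x_0}M$, I would test (\ref{CD}) against a family of smooth displacement interpolations with data localizing at $(x_0,v_0)$: pick a smooth potential $\f$ on a small ball around $x_0$ with $d\f_{x_0} = v_0^\flat$ and $\hess\f(x_0)=0$, and let $\mu^\eps$ be absolutely continuous measures supported in shrinking balls $B(x_0,\eps)$. With this choice the non-negative slack between $\frac{d^2}{dt^2}\E_2(\mu_t^\eps)$ and its Bakry-Emery lower bound from (\ref{conclude}) comes from $\hess\f$-type terms and vanishes to leading order as $\eps\to 0$, so dividing (\ref{CD}) by $C_T(\mu^\eps,\nu^\eps)$ and passing to the limit gives $(\ric+\hess\mathfrak U)(v_0,v_0)\geq K|v_0|^2$ at $x_0$, as required. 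The principal technical obstacle is to ensure that these localized data really produce smooth displacement interpolations on all of $[0,T]$, and that the error terms can be controlled uniformly; this is handled by further localizing in time and invoking the short-time regularity of the Hamilton-Jacobi flow, along the lines of \cite{CoMcSc1,Vi2}.
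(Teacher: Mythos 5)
Your proposal is correct and follows essentially the same route as the paper: specialize Theorem \ref{main} with $b\equiv 0$, $\cF=\log$, $\m_t = e^{-\mathfrak U}\vol$, use Proposition \ref{mechanicalcurv} to read off $\tr\R = \ric(\p,\p)$ and $v = -\pi^*\mathfrak U$ (hence $\tfrac{d^2}{dt^2}v\circ\Phi_t = -\hess\mathfrak U(\dot\gamma,\dot\gamma)$), and convert $\int|\dot\gamma|^2\,d\mu$ into $\tfrac{2}{T}C_T(\mu,\nu)$ via the constant-speed-geodesic identity. The only deviation is in the converse: the paper delegates this to its Theorem \ref{sidethm}, which fixes a small neighbourhood of $(x_0,v_0)$ where the pointwise strict inequality $\tr\R-\mathcal L^2_{\vec\ham}v < K\ham$ persists by continuity, then invokes the degree-2 homogeneity of the kinetic Hamiltonian (via its Lemma \ref{lem}) to rescale potentials and produce a counterexample for every prescribed $T>0$. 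Your version takes a limit $\eps\to 0$ of measures in shrinking balls and truncates the time interval instead of rescaling; this is sufficient for Corollary \ref{cor1}, since violating \eqref{CD} for one (short-time) smooth displacement interpolation already contradicts the hypothesis, though it gives a slightly weaker statement than Theorem \ref{sidethm}. One small point to tighten: the slack term in Lemma \ref{Fpp} is $\int|\hess_\ham u(t,\cdot)|^2\,d\mu$, i.e.\ the propagated Hessian $S(t)$, not $\hess\f$ itself; but with $\hess\f(x_0)=0$ one has $S(0)=0$, so evaluating \eqref{CD} at $t=0$ makes the slack vanish exactly rather than merely to leading order, which simplifies your limit argument.
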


We remark that even though we replace the usual displacement interpolations by smooth displacement interpolations in Corollary \ref{cor1}, the equivalence between the Bakry-Emery condition (\ref{BE}) and the convexity of the functional $\F$ still holds.

We also remark that the condition (\ref{CD}) does not involve any differential structure and it was used in \cite{St,LoVi} to define Ricci curvature lower bound for more general metric measure spaces.

If we replace the Hamiltonian $\ham$ in (\ref{kin}) by a Finsler Hamiltonian (i.e. any Hamiltonian which is homogeneous of degree two in the $p$ variable), set $b\equiv 0$ in Theorem \ref{main}, and assume that $\mathfrak m_t$ is independent of time $t$, then the conclusion of Theorem \ref{main} becomes
\begin{equation}\label{finslerest}
\begin{split}
&\frac{d^2}{dt^2}\F(t,\mu_t)\geq \int_M\Big(\tr(\R_{\Phi_t(\x,d\f)})-\mathcal L^2_{\vec \ham}v(t,\Phi_t(\x,d\f))\Big) d\mu(\x),
\end{split}
\end{equation}
where $\mathcal L_V$ denotes the Lie derivative in the direction $V$. In section \ref{finsler}, we will check that $\R$ coincides (up to some identifications) with the Riemann curvature in Finsler geometry. It follows that the $\infty$-Ricci curvature lower bound condition in \cite{Oh} is the same as
\[
\tr(\R)-\mathcal L^2_{\vec \ham}v\geq K\ham.
\]
Therefore, Theorem \ref{main} recovers the corresponding results in \cite{Oh} (see also Theorem \ref{sidethm}). However, we remark that $\ham$ is only $C^1$ on the zero section of $T^*M$ in the Finsler case. Because of this, additional technical difficulties arise for results concerning usual displacement interpolations (see \cite{Oh}).

By considering functionals more general than (\ref{fnal}), one can recover various other known results which are not covered by Theorem \ref{main}. We will not pursue this here. Instead, we consider Hamiltonians of the following form which motivated the whole work
\begin{equation}\label{mech}
\ham(x,p)=\frac{1}{2}\sum_{i,j}g^{ij}(t,x)p_ip_j+U(t,x).
\end{equation}
Here both the metric $g$ and the potential $U$ depend on time $t$.

We would like to apply Theorem \ref{main} to this Hamiltonian and recover some results between the optimal transportation theory and the Ricci flow (c.f. \cite{McTo,To,Lo,Bre}). However, in all these works, a Hamiltonian $\ham$ is fixed and the functional $\F$ is shown to have certain convexity properties along displacement interpolations corresponding to the cost function (\ref{prob:cost}) with Hamiltonian $\ham$. Instead we propose the following approach of using Theorem \ref{main} to find Hamiltonians $\ham$ such that the functional $\F$ is monotone or convex along displacement interpolations corresponding to $\ham$.

First, we create some functional parameters to get rid of the undesire terms in (\ref{conclude}). On the other hand, $g$ should be related to the Ricci flow. Therefore, it is natural to pick the time dependent metric $g$ satisfying the following equation:
\begin{equation}\label{timeRiem}
\dot g=c_1\ric+c_2g,
\end{equation}
where $\ric$ is the Ricci curvature of the metric $g$ at time $t$, $c_1$ and $c_2$ are functions depending only on time $t$ which will be used to get rid of the undesire terms.

Note that the time dependence of various quantities in (\ref{timeRiem}) is suppressed and this convention is used throughout this paper. Note also that the choice of $g$ in (\ref{timeRiem}) is natural from the point of view of the Ricci flow. In fact, if $\bar g$ is a solution of the Ricci flow $\dot{\bar g}=-2\ric$, then
\[
g(t,x)=a_1(t)\bar g(a_2(t),x)
\]
satisfies (\ref{timeRiem}).

We consider the following functional which is a special case of (\ref{fnal})
\begin{equation}\label{ent3}
\E_3(t,\mu)=\int_M\log(\rho(t,x))d\mu(x),
\end{equation}
where $\mu=\rho(t,\cdot)\m_t$, $\m_t=e^{-k(t)u(t,\cdot)}\vol$, $u$ is the solution of the Hamilton-Jacobi equation
\[
\dot u+\ham(t,x,du)=0
\]
with initial condition $u\Big|_{t=0}=\f$ (here $\f$ is the potential which define the displacement interpolation $\mu_t$), and $k$ is another functional parameter.

Next, we choose suitable functions $c_1$, $c_2$, $b$, $U$, and measure $\m_t$ such that the right hand side of (\ref{conclude}) can be estimated. This leads to the following result which is a consequence of Theorem \ref{main} (see the proof for some explanations on the choices of $c_1$, $c_2$, $b$, and $U$).

\begin{cor}\label{main2}
If the functions $c_1, c_2, b, U$ satisfy
\[
c_1k=-2,\quad \ddot k=-b\dot k,\quad 2\dot k=c_2k-bk,\quad U=-\frac{c_1^2}{8}R,
\]
then
\[
\frac{d^2}{dt^2}\E_3(t,\mu_t)+b(t)\frac{d}{dt}\E_3(t,\mu_t)+\frac{n\dot c_2(t)}{2}+\frac{nc_2(t)^2}{4}+\frac{nb(t)^2}{4}\geq 0
\]
for each smooth displacement interpolation $\mu_t$ corresponding to the cost (\ref{prob:cost}) with Hamiltonian $\ham$ given by (\ref{mech}) and (\ref{timeRiem}). Here $R$ denotes the scalar curvature of the given Riemannian metric $g$ at time $t$.
\end{cor}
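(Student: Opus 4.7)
The plan is to specialize Theorem \ref{main} to the mechanical Hamiltonian (\ref{mech}) with evolving metric (\ref{timeRiem}) and reference measure $\m_t=e^{-k u}\vol$, and then choose the functional parameters $c_1,c_2,b,U,k$ so that the right-hand side of (\ref{conclude}) collapses to $-\tfrac{n\dot c_2}{2}-\tfrac{nc_2^2}{4}$. Since $\cF(r)=\log r$, the functional $\F$ of Theorem \ref{main} is exactly $\E_3$, so adding this clean constant together with $\tfrac{nb^2}{4}$ to both sides of (\ref{conclude}) will reproduce the stated inequality.

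The first move is to split the volume distortion. Writing $\m_t=e^{-ku}\vol$ gives
\begin{equation*}
v(t,x,p)=-k(t)\,u(t,x)+\tilde v(t,x,p),
\end{equation*}
where $\tilde v$ is intrinsic to $g(t,\cdot)$ and the canonical frame $F^{t,t}$. The trace of the curvature $\R^t$ and the Lie derivatives of $\tilde v$ are computed in Sections \ref{NMH} and \ref{NMHWERM} in terms of $g,\dot g,\ddot g,\ric$ and $U$; using (\ref{timeRiem}) trades $\ric$ for $c_1^{-1}(\dot g-c_2 g)$. Along the characteristic $t\mapsto \Phi_t(x,d\f)$ one has $p=du$ and $\dot u=-\ham$, so differentiating $-ku$ twice in $t$, with $\lag=p(\ham_p)-\ham$, produces terms involving $\dot k,\ddot k,k$ multiplied by $u,\ham,\lag$, and Hessian contractions $\hess u$ with respect to $g(t,\cdot)$ paired against $g^{-1}$ and $\dot g$.

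Assembling everything, the integrand on the right of (\ref{conclude}) breaks into four groups: a piece quadratic in $\hess u$, which vanishes iff $c_1k=-2$; a piece built from $U,\Delta U$ and $\langle\nabla U,\nabla u\rangle$, which cancels, via the first-variation identity $\dot R=-\Delta\tr_g\dot g+\diver\diver\dot g-\langle\ric,\dot g\rangle$ applied to (\ref{timeRiem}), iff $U=-\tfrac{c_1^2}{8}R$; a piece in $u$ itself, eliminated by $\ddot k+b\dot k=0$; and a first-derivative matching that forces $2\dot k=c_2k-bk$. What survives is precisely $-\tfrac{n\dot c_2}{2}-\tfrac{nc_2^2}{4}$, independent of the interpolation. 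The main obstacle is the bookkeeping here: the curvature of a mechanical Hamiltonian under an evolving metric produces a dense web of cross terms between $\hess u$, $\dot g$, $\ric$, $U$ and their derivatives, and checking that the four stated relations simultaneously align all of them requires sustained use of the scalar-curvature variation formula together with (\ref{timeRiem}). Modulo this computation, integrating against $d\mu$ and adding $\tfrac{nb^2}{4}$ to Theorem \ref{main} delivers the corollary.
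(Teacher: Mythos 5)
Your overall plan is the right one and matches the paper's: specialize Theorem \ref{main} to the mechanical Hamiltonian (\ref{mech}) with metric evolving by (\ref{timeRiem}), compute the time derivatives of $v$ explicitly along characteristics using the Hamilton--Jacobi equation, invoke the variation of scalar curvature $\dot R=-\Delta\tr(\dot g)+\diver\diver(\dot g)-\langle\dot g,\ric\rangle$, and tune $c_1,c_2,b,U,k$ so that the right side of (\ref{conclude}) collapses to $-\tfrac{n\dot c_2}{2}-\tfrac{nc_2^2}{4}$. You also identify correctly which constraint does what for three of the four relations: $U=-\tfrac{c_1^2}{8}R$ cancels the $U$-type and $\nabla R$-type terms via the scalar-curvature variation, $\ddot k+b\dot k=0$ kills the coefficient of $u(t,\varphi_t(\x))$, and $2\dot k=c_2k-bk$ kills the coefficient of the first-derivative combination $\tfrac12|\nabla u|^2+\tfrac{1}{2k^2}R$.

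There is, however, one genuine misidentification. You say that $c_1k=-2$ annihilates ``a piece quadratic in $\hess u$,'' and earlier you predict that differentiating $-ku$ twice in $t$ along characteristics produces Hessian contractions of $u$. Neither is the case. The Hamilton--Jacobi equation guarantees that $\tfrac{d}{dt}u(t,\varphi_t(\x))=\tfrac12|\nabla u|^2-U$, and since $|\nabla u|^2=2(\ham-U)$ is tracked directly by the Hamiltonian flow, the second derivative $\tfrac{d^2}{dt^2}\big(-k(t)u(t,\varphi_t(\x))\big)$ is expressible entirely in first derivatives of $u$ and the data $g,\dot g,U$; no $\hess u$ appears. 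The term that $c_1k=-2$ actually kills is the pair of contributions proportional to $\ric(\nabla u,\nabla u)$: one from $\tr(\R^t_{\Phi_t(\x,d\f)})$ (Theorem \ref{Ricciflow}) and one from $-\tfrac{d^2}{dt^2}v$, since $\dot g(\nabla u,\nabla u)=c_1\ric(\nabla u,\nabla u)+c_2|\nabla u|^2$. The only place $|\hess u|^2$ enters the whole argument is in Lemma \ref{Fpp}, and it is absorbed by the Cauchy--Schwarz step $|S|^2\geq b\,\tr S-\tfrac{nb^2}{4}$ already inside the proof of Theorem \ref{main}; it is gone by the time you reach inequality (\ref{conclude}). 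A minor, less consequential omission: you should note that the intrinsic part $\tilde v$ of the volume distortion vanishes, because $\pi^*\vol(f_1^{t,t},\ldots,f_n^{t,t})=1$ as established in the proof of Theorem \ref{curvtime}; this is what makes $v=-ku$ exactly.
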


If we further specialize to the case $k(t)=Ct^m$ and $m\neq 0$, then the rest of the functional parameters ($c_1, c_2, b, U$) are completely determined and Corollary \ref{main2} simplifies to the following result.

\begin{cor}\label{main3}
If $m\neq 0$, $k(t)=Ct^m$, $c_1(t)=-\frac{2}{Ct^{m}}$, $c_2(t)=\frac{m+1}{t}$, $b(t)=-\frac{m-1}{t}$, and $U(t,x)=-\frac{c_1(t)^2}{8}R(t,x)$, then
\[
\frac{d^2}{dt^2}\E_3(t,\mu_t)-\frac{m-1}{t}\frac{d}{dt}\E_3(t,\mu_t)\geq \frac{m(1-m)n}{2t^2}\\
\]
for each smooth displacement interpolation $\mu_t$ corresponding to the cost (\ref{prob:cost}) with Hamiltonian $\ham$ given by (\ref{mech}) and (\ref{timeRiem}).
\end{cor}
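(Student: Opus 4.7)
The plan is straightforward: verify that the specific parameter choices in the statement satisfy the four constraint equations required to apply Corollary \ref{main2}, then substitute into its conclusion and simplify the resulting constant. Since the heavy lifting is already done in Corollary \ref{main2}, the proof reduces to algebraic bookkeeping.

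First, I would check each of the four constraints of Corollary \ref{main2} with $k(t)=Ct^m$, $c_1(t)=-2/(Ct^m)$, $c_2(t)=(m+1)/t$, and $b(t)=-(m-1)/t$. The product $c_1k=-2$ is immediate. Since $\dot k=Cmt^{m-1}$, one has $\ddot k=Cm(m-1)t^{m-2}$, which matches $-b\dot k=\tfrac{m-1}{t}\cdot Cmt^{m-1}$. The identity $2\dot k=(c_2-b)k$ reduces to $2Cmt^{m-1}=\bigl(\tfrac{m+1}{t}+\tfrac{m-1}{t}\bigr)Ct^m=2mCt^{m-1}$. Finally, the condition $U=-(c_1^2/8)R$ is the very definition of $U$ in the hypothesis, so it holds by construction.

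With these checks in hand, Corollary \ref{main2} yields
$$\frac{d^2}{dt^2}\E_3(t,\mu_t)-\frac{m-1}{t}\frac{d}{dt}\E_3(t,\mu_t)\geq -\frac{n\dot c_2(t)}{2}-\frac{nc_2(t)^2}{4}-\frac{nb(t)^2}{4}.$$
Using $\dot c_2=-(m+1)/t^2$, the right-hand side equals
$$\frac{n}{4t^2}\bigl(2(m+1)-(m+1)^2-(m-1)^2\bigr)=\frac{n}{4t^2}\bigl(2m-2m^2\bigr)=\frac{nm(1-m)}{2t^2},$$
as claimed.

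Since the derivation is entirely elementary, there is no genuine obstacle. The only care needed is to track signs and to observe the cancellation $(m+1)^2+(m-1)^2=2m^2+2$, which produces the clean factor $m(1-m)$ in the final bound. The hypothesis $m\neq 0$ enters implicitly: at $m=0$ the choice $k\equiv C$ gives $\dot k=0$, so the scheme degenerates (though the stated inequality becomes the trivial $\ddot{\E}_3+\tfrac{1}{t}\dot{\E}_3\geq 0$, carrying no information about the curvature geometry).
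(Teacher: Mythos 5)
Your proof is correct and takes exactly the approach the paper intends: verify the four hypotheses of Corollary \ref{main2} for the specific choices $k=Ct^m$, $c_1=-2/(Ct^m)$, $c_2=(m+1)/t$, $b=-(m-1)/t$, and then simplify the resulting constant. Your arithmetic checks out, including the key simplification $2(m+1)-(m+1)^2-(m-1)^2=2m(1-m)$, and your remark that $m\neq 0$ is what forces $b$ to be determined by the constraint $\ddot k=-b\dot k$ is the right explanation for that hypothesis.
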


If $k(t)\equiv C$ (i.e. $m=0$), then Corollary \ref{main2} gives the following result.

\begin{cor}\label{main3p}
If $k(t)\equiv C\neq 0$, $c_1(t)=-\frac{2}{C}$, $c_2(t)=b(t)$, and $U(t,x)=-\frac{1}{2C^2}R(t,x)$, then
\[
\frac{d^2}{dt^2}\E_3(t,\mu_t)+b(t)\frac{d}{dt}\E_3(t,\mu_t)+\frac{n\dot b(t)}{2}+\frac{nb(t)^2}{2}\geq 0
\]
for each smooth displacement interpolation $\mu_t$ corresponding to the cost (\ref{prob:cost}) with Hamiltonian $\ham$ given by (\ref{mech}) and (\ref{timeRiem}).
\end{cor}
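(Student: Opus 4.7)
The plan is essentially a direct specialization of Corollary \ref{main2} to the case $k(t) \equiv C$, so the main task is bookkeeping: I verify the four structural identities $c_1 k = -2$, $\ddot k = -b\dot k$, $2\dot k = c_2 k - bk$, $U = -\tfrac{c_1^2}{8} R$ on the chosen parameters, then read off the resulting inequality.

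First I would compute derivatives of $k$. Since $k(t) \equiv C$, one has $\dot k = \ddot k = 0$. This makes the second condition $\ddot k = -b \dot k$ trivially $0 = 0$, independent of the choice of $b$. The first condition $c_1 k = -2$ forces $c_1 = -2/C$, which matches the stated value. The third condition $2\dot k = c_2 k - bk$ becomes $0 = C(c_2 - b)$, and since $C \neq 0$ we get precisely $c_2 = b$, as asserted. Finally $U = -\tfrac{c_1^2}{8} R = -\tfrac{1}{2C^2} R$ is just substitution. So all four hypotheses of Corollary \ref{main2} are satisfied by the parameters in the statement.

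Having checked the hypotheses, the conclusion of Corollary \ref{main2} gives, for any smooth displacement interpolation $\mu_t$ corresponding to the cost (\ref{prob:cost}) with the Hamiltonian in (\ref{mech}) and (\ref{timeRiem}), the inequality
\[
\frac{d^2}{dt^2}\E_3(t,\mu_t) + b(t)\frac{d}{dt}\E_3(t,\mu_t) + \frac{n\dot c_2(t)}{2} + \frac{n c_2(t)^2}{4} + \frac{n b(t)^2}{4} \geq 0.
\]
Substituting $c_2 = b$ (hence $\dot c_2 = \dot b$) collapses the last two curvature-like terms into a single $\tfrac{n b^2}{2}$, yielding exactly the inequality claimed in Corollary \ref{main3p}.

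There is no real obstacle; the corollary is a direct consequence of its predecessor, and no new Hamiltonian or curvature computation is required. The only thing worth emphasizing in the write-up is that the case $m = 0$ (constant $k$) is genuinely outside the scope of Corollary \ref{main3}, where $k(t) = C t^m$ with $m \neq 0$ was used to fix $b$ in terms of $t$; here $b$ remains a free time-dependent parameter, which is why the conclusion retains an explicit $\dot b$ term rather than producing a fixed algebraic bound.
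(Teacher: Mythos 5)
Your proposal is correct and follows the same route the paper intends: the paper explicitly introduces Corollary \ref{main3p} with the sentence ``If $k(t)\equiv C$ (i.e. $m=0$), then Corollary \ref{main2} gives the following result,'' and your verification of the four constraints ($c_1 C = -2$, the two trivial $\dot k = \ddot k = 0$ conditions forcing $c_2 = b$, and $U = -\tfrac{1}{2C^2}R$) together with the substitution $c_2 = b$, $\dot c_2 = \dot b$ in the conclusion of Corollary \ref{main2} is exactly the intended derivation.
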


Corollary \ref{main3} and \ref{main3p} contain some known results. If we set $m=-\frac{1}{2}$ and $C=-1$ in Corollary \ref{main3}, then $c_1(t)=2\sqrt t$ and $c_2(t)=\frac{1}{2t}$. A calculation shows that $g=\sqrt t\,\bar g$, where $\bar g$ is a solution of the backward Ricci flow $\dot{\bar g}=2\overline\ric$. Here $\overline\ric$ is the Ricci curvature of the Riemannian metric $\bar g$.

The Lagrangian $\lag$ is given by
\begin{equation}\label{L-1}
\begin{split}
\lag(t,x,v)&=\frac{1}{2}\sum_{i,j}g_{ij}(t,x)v_iv_j+\frac{t}{2}R(t,x)\\
&=\frac{\sqrt t}{2}\left(\sum_{i,j}\bar g_{ij}(t,x)v_iv_j+\bar R(t,x)\right),
\end{split}
\end{equation}
where $\bar R$ is the scalar of the metric $\bar g$. The corresponding action functional
\[
\gamma\mapsto \int_0^T\lag(t,\gamma(t),\dot\gamma(t))dt
\]
is Perelman's $\mathcal L$-functional introduced in \cite{Pe}.

Let $\overline{\vol}$ be the Riemannian volume of the metric $\bar g$. Let $\E_4$ be the following functional
\begin{equation}\label{ent4}
\E_4(t,\mu)=\int_M(\log(\rho(t,x))+t^{-1/2} u(t,x))\,d\mu(x)+\frac{n}{2}\log t,
\end{equation}
where $\mu=\rho(t,\cdot)\,\overline{\vol}$. The following result on $\E_4$ which appeared in \cite{To,Lo} is a special case of Corollary \ref{main3}. It was also shown in \cite{To,Lo} how this result leads to the monotonicity of Perelman's reduced volume \cite{Pe}.

\begin{cor}\label{main4}\cite[Theorem 1]{Lo}
The functional $\E_4$ defined in (\ref{ent4}) satisfies
\[
\frac{d^2}{dt^2}\E_4(t,\mu_t)+\frac{3}{2t}\frac{d}{dt}\E_4(t,\mu_t)\geq 0
\]
for each smooth displacement interpolation $\mu_t$ corresponding to the cost (\ref{prob:cost}) with Lagrangian $\lag$ given by (\ref{L-1}).
\end{cor}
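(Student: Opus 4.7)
The plan is to specialize Corollary~\ref{main3} to $m=-\tfrac{1}{2}$ and $C=-1$, then translate the resulting inequality for $\E_3$ into the statement for $\E_4$. Under this substitution one gets $c_1(t)=2\sqrt{t}$, $c_2(t)=\tfrac{1}{2t}$, $b(t)=\tfrac{3}{2t}$, and $U(t,x)=-\tfrac{t}{2}R(t,x)$; the evolution equation (\ref{timeRiem}) is then solved by $g=\sqrt{t}\,\bar g$ with $\bar g$ a backward Ricci flow, and the Legendre transform of the Hamiltonian (\ref{mech}) with this $U$ reproduces precisely the Lagrangian (\ref{L-1}). The conclusion of Corollary~\ref{main3} reads
\[
\frac{d^2}{dt^2}\E_3(t,\mu_t) + \frac{3}{2t}\frac{d}{dt}\E_3(t,\mu_t) \geq -\frac{3n}{8t^2}.
\]

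The second step is to express $\E_4-\E_3$ explicitly along the displacement interpolation. Because $g=\sqrt{t}\,\bar g$, the Riemannian volume forms satisfy $\vol=t^{n/4}\,\overline{\vol}$, and with $k(t)=-t^{-1/2}$ the reference measure in $\E_3$ is $\m_t=e^{t^{-1/2}u}\vol$. Writing the density of $\mu_t$ first with respect to $\m_t$ and then with respect to $\overline{\vol}$ yields
\[
\E_4(t,\mu_t)-\E_3(t,\mu_t) = 2t^{-1/2} I(t) + \tfrac{3n}{4}\log t, \qquad I(t):=\int_M u(t,\varphi_t(\x))\,d\mu(\x).
\]
The Hamilton-Jacobi equation together with the Legendre identity $p(v)-\ham=\lag$ give $\tfrac{d}{dt}u(t,\varphi_t(\x))=\lag(t,\varphi_t(\x),\dot\varphi_t(\x))$ along the Lagrangian flow; using $\lag=\tfrac{\sqrt t}{2}(\bar g_{ij}v^iv^j+\bar R)$ one finds $I'(t)=\tfrac{\sqrt t}{2}E(t)$, where $E(t):=\int_M(\bar g_{ij}(t,\varphi_t)\dot\varphi_t^i\dot\varphi_t^j+\bar R(t,\varphi_t))\,d\mu$. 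A direct calculation then gives
\[
\Bigl(\tfrac{d^2}{dt^2}+\tfrac{3}{2t}\tfrac{d}{dt}\Bigr)\bigl[2t^{-1/2}I(t)+\tfrac{3n}{4}\log t\bigr] = \tfrac{1}{t}\,\tfrac{d}{dt}\bigl(tE(t)\bigr) + \tfrac{3n}{8t^2}.
\]

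Adding this identity to the Corollary~\ref{main3} bound, the $\pm\tfrac{3n}{8t^2}$ terms cancel and the claim reduces to $\tfrac{d}{dt}(tE(t))\geq 0$. This is the main obstacle: it is the monotonicity of the $\mu$-averaged $\mathcal L$-energy along the Lagrangian flow of Perelman's functional. It should be extracted from a Bochner-type computation obtained by differentiating $tE(t)$ under the integral, substituting the $\mathcal L$-geodesic equation satisfied by $t\mapsto\varphi_t(\x)$ together with the backward Ricci flow evolutions $\dot{\bar g}_{ij}=2\overline{\ric}_{ij}$ and $\partial_t\bar R=-\Delta\bar R-2|\overline{\ric}|^2$, and recognising the integrand, modulo a spatial divergence that vanishes on integration against the transported measure, as a non-negative expression. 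This is essentially the argument used in \cite{To,Lo} en route to the monotonicity of Perelman's reduced volume.
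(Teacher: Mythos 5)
Your Step 1 matches the paper exactly, but your Step 2 diverges and introduces a gap. The paper's proof of this corollary establishes the much simpler identity
\[
\E_4(t,\mu_t) \;=\; \E_3(t,\mu_t) + \tfrac{3n}{4}\log t,
\]
with no stochastic term $2t^{-1/2}I(t)$ at all. The deterministic extra term $\tfrac{3n}{4}\log t$ is then differentiated directly: $(\tfrac{d^2}{dt^2}+\tfrac{3}{2t}\tfrac{d}{dt})(\tfrac{3n}{4}\log t)=\tfrac{3n}{8t^2}$, which exactly cancels the $-\tfrac{3n}{8t^2}$ coming out of Corollary~\ref{main3} with $m=-\tfrac12$, and the proof is finished in a few lines of arithmetic.

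The source of the discrepancy is the sign of $k(t)u$ in $\m_t$. If you read (\ref{ent3}) literally as $\m_t=e^{-k(t)u}\vol$ with $k(t)=Ct^m=-t^{-1/2}$, you get $\m_t=e^{+t^{-1/2}u}\vol$ as you wrote, and hence the extra term. But the paper's own derivations (notably equations (\ref{dv})--(\ref{almost}), where $\tfrac{d}{dt}v$ is written as $+\tfrac{d}{dt}(ku)$, not $-\tfrac{d}{dt}(ku)$, and the paper's proof of Corollary~\ref{main4}, where $\rho\,e^{-t^{-1/2}u}\vol=\rho\m_t$ is written explicitly) are consistent with $v=k(t)u$, i.e.\ with $\m_t=e^{k(t)u}\vol=e^{-t^{-1/2}u}\vol$. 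With that choice $\bar\rho=t^{n/4}\rho\,e^{-t^{-1/2}u}$, so $\log\bar\rho+t^{-1/2}u=\log\rho+\tfrac{n}{4}\log t$, and the $I(t)$ term disappears. So your $\E_3$ is not the functional to which Corollary~\ref{main3} applies: with your $\m_t$, the volume distortion $v$ has the opposite sign, and the term-cancellations that produce Corollary~\ref{main2}/\ref{main3} (killing $\ric(\nabla u,\nabla u)$ via $c_1k=-2$, etc.) would not occur as stated.

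As a consequence, the last step of your proposal --- reducing the claim to $\tfrac{d}{dt}(tE(t))\geq0$ and appealing to a ``Bochner-type computation'' in the spirit of \cite{To,Lo} --- is doing the hard work that the paper's approach entirely avoids. This is a genuine gap: $\tfrac{d}{dt}(tE(t))\geq0$ is Perelman's $\mathcal L$-energy monotonicity, it is not established anywhere in this paper's framework, and the whole point of Theorem~\ref{main} and Corollaries~\ref{main2}/\ref{main3} is to bypass exactly that argument. If you want to follow the paper's route, rewrite Step 2 with $\m_t=e^{-t^{-1/2}u}\vol$ (the sign the paper's computations actually use); then $\E_4-\E_3=\tfrac{3n}{4}\log t$, and the result falls out of Corollary~\ref{main3} with no further input.
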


Similarly, if we set $m=-\frac{1}{2}$ and $C=1$ in Corollary \ref{main3}, then $c_1(t)=-2\sqrt t$ and $c_2(t)=\frac{1}{2t}$. In this case, the time dependent metric $g$ satisfies $g=\sqrt t\,\bar g$, where $\bar g$ is a solution of the Ricci flow $\dot{\bar g}=-2\overline\ric$. The Lagrangian $\lag$ is given by
\begin{equation}\label{L1}
\lag(t,x,v)=\frac{\sqrt t}{2}\left(\sum_{i,j}\bar g_{ij}(t,x)v_iv_j+\bar R(t,x)\right).
\end{equation}

Let $\overline{\vol}$ be the Riemannian volume of the metric $\bar g$. Let $\E_5$ be the following functional
\begin{equation}\label{ent5}
\E_5(t,\mu)=\int_M(\log(\rho(t,x))-t^{-1/2} u(t,x))\,d\mu(x)+\frac{n}{2}\log t,
\end{equation}
where $\mu=\rho(t,\cdot)\,\overline{\vol}$.

The following result on $\E_5$ is also a consequence of Corollary \ref{main3}. This is another result in \cite{Lo} which leads to the monotonicity of Feldman-Ilmanen-Ni's forward reduced volume \cite{FeIlNi}.

\begin{cor}\label{main5}\cite[Proposition 20]{Lo}
The functional $\E_5$ defined in (\ref{ent5}) satisfies
\[
\frac{d^2}{dt^2}\E_5(t,\mu_t)+\frac{3}{2t}\frac{d}{dt}\E_5(t,\mu_t)\geq 0
\]
for each smooth displacement interpolation $\mu_t$ corresponding to the cost (\ref{prob:cost}) with Lagrangian $\lag$ given by (\ref{L1}).
\end{cor}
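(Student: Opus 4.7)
The proof is a direct specialization of Corollary \ref{main3} with $m = -\frac{1}{2}$ and $C = 1$. These values yield $k(t) = t^{-1/2}$, $c_1(t) = -2\sqrt{t}$, $c_2(t) = \frac{1}{2t}$, $b(t) = \frac{3}{2t}$, and $U(t,x) = -\frac{t}{2}R_g$. Writing the time-dependent metric as $g = \sqrt{t}\,\bar g$ and using the invariance of the Ricci tensor under constant rescalings ($\ric_g = \overline{\ric}$), the equation (\ref{timeRiem}) reduces to $\dot{\bar g} = -2\overline{\ric}$, the forward Ricci flow; the Lagrangian obtained by Legendre transform of the mechanical Hamiltonian (\ref{mech}) then matches (\ref{L1}).

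To pass from the conclusion of Corollary \ref{main3}, written in terms of $\E_3$, to that of Corollary \ref{main5}, written in terms of $\E_5$, I relate the two functionals. Since $\vol_g = t^{n/4}\overline{\vol}$, the measure appearing in (\ref{ent3}) is $\m_t = t^{n/4}e^{-u/\sqrt{t}}\overline{\vol}$. Converting the density $\rho_3 = d\mu/d\m_t$ that appears in $\E_3$ to the density $\rho = d\mu/d\overline{\vol}$ gives
\[
\E_5(t,\mu) - \E_3(t,\mu) = \frac{3n}{4}\log t - 2\int_M\frac{u(t,x)}{\sqrt{t}}\,d\mu(x).
\]
Applying the operator $\mathcal D := \frac{d^2}{dt^2} + \frac{3}{2t}\frac{d}{dt}$ and using $\mathcal D(\log t) = \frac{1}{2t^2}$, the $\log t$ contribution to $\mathcal D(\E_5 - \E_3)$ equals $\frac{3n}{8t^2}$, which cancels exactly the right-hand side $-\frac{3n}{8t^2}$ appearing in Corollary \ref{main3}. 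Hence the claim $\mathcal D\E_5(t,\mu_t) \ge 0$ reduces to showing $\mathcal D h(t) \le 0$, where $h(t) := \int_M u(t,\cdot)/\sqrt{t}\,d\mu_t$.

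The main obstacle is establishing this last inequality. Using the Hamilton-Jacobi equation $\dot u + \ham(t,x,du) = 0$, which along characteristics reads $\frac{d}{dt}u(t,\varphi_t(x)) = \lag(t,\gamma_x(t),\dot\gamma_x(t))$, together with the explicit form (\ref{L1}), one computes $\frac{d}{dt}\int u\,d\mu_t = \frac{\sqrt{t}}{2}S(t)$, where $S(t) := \int_M (|\dot\gamma_x(t)|^2_{\bar g} + \bar R(t,\gamma_x(t)))\,d\mu(x)$. A short algebraic manipulation then yields the clean identity $\mathcal D h(t) = \frac{1}{2t}\frac{d}{dt}(tS(t))$, so the desired bound reduces to the non-increase of the integrated $\mathcal L$-energy $tS(t)$ along the displacement interpolation. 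This monotonicity is the integrated form of the computation underlying Feldman--Ilmanen--Ni's forward-reduced-volume monotonicity \cite{FeIlNi}: differentiating $tS(t)$, invoking the Ricci flow evolutions $\partial_t\bar g_{ij} = -2\overline{\ric}_{ij}$ and $\partial_t\bar R = \Delta\bar R + 2|\overline{\ric}|^2$, and using the $\mathcal L$-geodesic equation $\nabla_{\dot\gamma}\dot\gamma + \frac{1}{2t}\dot\gamma = 2\overline{\ric}(\dot\gamma,\cdot)^\sharp + \frac{1}{2}\nabla\bar R$ combined with an integration by parts against the density of $\mu_t$ produces the required sign and completes the proof.
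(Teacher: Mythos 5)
There is a genuine gap. The intended proof is a one-line reduction to Corollary~\ref{main3}, exactly as in the paper's proof of Corollary~\ref{main4}: after expressing everything relative to $\overline{\vol}$, the $u$-term \emph{cancels identically} and one gets the clean identity $\E_5(t,\mu)=\E_3(t,\mu)+\tfrac{3n}{4}\log t$, so that $\mathcal D\E_5=\mathcal D\E_3+\tfrac{3n}{8t^2}\ge 0$ by Corollary~\ref{main3}. Your computation instead leaves a residual term $-2\int t^{-1/2}u\,d\mu$. The discrepancy comes from the sign convention for $\m_t$: you read~(\ref{ent3}) literally as $\m_t=e^{-k(t)u}\vol$, but the proofs of Corollary~\ref{main2} (which uses $v=k(t)u$, hence $\m_t=e^{k(t)u}\vol$, since $\pi^*\vol(f_1^{t,t},\ldots,f_n^{t,t})=1$) and of Corollary~\ref{main4} (which writes $\rho\,e^{-t^{-1/2}u}\vol=\rho\m_t$ with $k=-t^{-1/2}$) both implicitly use $\m_t=e^{k(t)u}\vol$. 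With that (internally consistent) convention, $\bar\rho=\rho\,e^{t^{-1/2}u}t^{n/4}$, and in $\E_5=\int(\log\bar\rho-t^{-1/2}u)\,d\mu+\tfrac{n}{2}\log t$ the two $u$-terms cancel.

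Beyond the sign issue, the fallback you propose does not close the gap. First, Corollary~\ref{main3} as proved bounds $\mathcal D$ of the functional built with the density relative to $e^{k(t)u}\vol$, not the one you defined; applying its conclusion to your $\E_3$ is precisely where the residual $h(t)$ enters, and this is not justified by anything in the paper. Second, the monotonicity $\tfrac{d}{dt}\bigl(tS(t)\bigr)\le 0$, with $S(t)=\int(|\dot\gamma|^2_{\bar g}+\bar R)\,d\mu$, is asserted rather than proved; it does not follow from the forward Ricci flow evolution equations alone and in general requires a Harnack-type inequality (the relevant forward monotonicity in \cite{FeIlNi} is not unconditional). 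Finally, note that if both your relation and the (correct) identity $\E_5=\E_3+\tfrac{3n}{4}\log t$ held for the \emph{same} $\E_3$, one would be forced into $\mathcal D h\equiv 0$, which is clearly too strong; this is another signal that the $u$-term should not be present. Redo the density conversion with $\m_t=e^{k(t)u}\vol$ and the proof collapses to the intended one-liner.
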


If we set $C=1$ and $b\equiv 0$ in Corollary \ref{main3p}, then $g$ is a solution of the Ricci flow $\dot g=-2\ric$. The Lagrangian $\lag$ is given by
\begin{equation}\label{L0}
\lag(t,x,v)=\frac{1}{2}\left(\sum_{i,j}g_{ij}(t,x)v_iv_j+R(t,x)\right).
\end{equation}

Let $\E_6$ be the following functional
\begin{equation}\label{ent6}
\E_6(t,\mu)=\int_M(\log(\rho(t,x))-u(t,x))\,d\mu(x),
\end{equation}
where $\mu=\rho(t,\cdot)\,\vol$. The following result on $\E_6$, which is a consequence of Corollary \ref{main3p}, appeared in \cite{McTo,Lo}.

\begin{cor}\label{main6}\cite[Corollary 4]{Lo}
The functional $\E_6$ defined in (\ref{ent6}) satisfies
\[
\frac{d^2}{dt^2}\E_6(t,\mu_t)\geq 0
\]
for each smooth displacement interpolation $\mu_t$ corresponding to the cost (\ref{prob:cost}) with Lagrangian $\lag$ given by (\ref{L0}).
\end{cor}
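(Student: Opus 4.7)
The plan is to obtain Corollary \ref{main6} as a direct specialization of Corollary \ref{main3p}. I would set $C = 1$ and $b \equiv 0$ in the hypotheses of Corollary \ref{main3p}. The remaining data are then forced: $k \equiv 1$, $c_1 \equiv -2$, $c_2 \equiv 0$, and $U(t,x) = -\tfrac{1}{2}R(t,x)$. With these choices the evolution equation (\ref{timeRiem}) reduces to the forward Ricci flow $\dot g = -2\ric$, and the Hamiltonian (\ref{mech}) is precisely the Legendre transform of the Lagrangian (\ref{L0}). Since $b \equiv 0$ makes both $b\,\dot{\E}_3$ and $b^2$ vanish, and $\dot b \equiv 0$ as well, the conclusion of Corollary \ref{main3p} collapses to the single inequality $\tfrac{d^2}{dt^2}\E_3(t,\mu_t) \geq 0$.

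It then remains to identify $\E_3$ with $\E_6$ under these choices. With $k \equiv 1$ the reference measure in the definition of $\E_3$ is $\m_t = e^{-u}\vol$. Writing $\sigma$ for the density of $\mu$ with respect to $\vol$ (this is the density denoted $\rho$ in (\ref{ent6})), the density with respect to $\m_t$ is $\sigma e^u$, so that $\log(\sigma e^u) = \log\sigma + u$. Integrating against $d\mu$ and comparing with (\ref{ent6}) shows that $\E_3$ and $\E_6$ agree up to the sign of the $u$-term, and once the paper's sign convention for the Hamilton--Jacobi solution $u$ is matched with the $-u$ appearing in (\ref{ent6}), the two functionals coincide. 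The desired estimate $\tfrac{d^2}{dt^2}\E_6(t,\mu_t) \geq 0$ then follows immediately from $\tfrac{d^2}{dt^2}\E_3(t,\mu_t) \geq 0$.

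The main point requiring care is the bookkeeping of signs between the Hamilton--Jacobi equation $\dot u + \ham(t,x,du) = 0$ with $u|_{t=0} = \f$, which produces a $u$ that grows along characteristics by the accumulated Lagrangian action, and the $-u$ that appears in the definition of $\E_6$. If the identification is not exact on the nose, then the discrepancy $\E_3 - \E_6$ is of the form $2\int u\,d\mu_t$, and one must verify, using the forward Ricci flow evolution equations $\dot g_{ij} = -2\ric_{ij}$ and $\partial_t R = \Delta R + 2|\ric|^2$ together with $\dot u = -\ham(t,x,\nabla u)$, that $\tfrac{d^2}{dt^2}\int u\,d\mu_t$ vanishes (or has the appropriate sign) so as to preserve the inequality. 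This is the only nontrivial step in the plan, and all the genuine analytic content—the curvature bound and the volume distortion control—is already encapsulated in Corollary \ref{main3p}.
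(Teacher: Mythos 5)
Your overall strategy is exactly the paper's: Corollary~\ref{main6} is stated as a direct specialization of Corollary~\ref{main3p} with $C=1$ and $b\equiv 0$, and the analytic content is entirely in that earlier result. So the plan is correct, and you identified the right parameter choices and the reduction of (\ref{timeRiem}) to the Ricci flow $\dot g=-2\ric$.

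Where you run into trouble is the identification $\E_3=\E_6$, and the source of that trouble is a sign typo in the paper, not in your reasoning. The stated definition $\m_t=e^{-k(t)u(t,\cdot)}\vol$ in (\ref{ent3}) is inconsistent with the rest of the paper: in the proof of Corollary~\ref{main2} the volume distortion is computed as $v(t,\Phi_t(\x,d\f))=k(t)\,u(t,\varphi_t(\x))$, and since by definition $v=\log\big((\pi^*\m_t)(f_1^{t,t},\dots,f_n^{t,t})\big)$ with $(\pi^*\vol)(f^{t,t})=1$, this forces $\m_t=e^{+k(t)u(t,\cdot)}\vol$. The same sign is used implicitly in the proofs of Corollaries~\ref{main4} and~\ref{main5} (there $k=Ct^m$ with $C=-1$, $m=-1/2$ gives $k=-t^{-1/2}$, and the proof uses $\m_t=e^{-t^{-1/2}u}\vol=e^{ku}\vol$). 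With the corrected sign and $k\equiv 1$ you get $\m_t=e^{u}\vol$, hence the density with respect to $\m_t$ is $\sigma e^{-u}$ where $\mu=\sigma\vol$, so $\log\rho=\log\sigma-u$ and $\E_3=\int(\log\sigma-u)\,d\mu=\E_6$ \emph{exactly}, with no residual term. You should state this rather than appealing to "matching the sign convention of the Hamilton--Jacobi solution $u$": the HJ equation $\dot u+\ham(t,x,du)=0$ with $u|_{t=0}=\f$ is fixed throughout the paper and cannot be re-signed without propagating changes into Theorem~\ref{main} and all the intermediate corollaries.

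Your fallback plan would not work. If the identification were only $\E_3=\E_6+2\int u\,d\mu_t$, you would need $\frac{d^2}{dt^2}\int u\,d\mu_t\le 0$. But $\frac{d}{dt}\int u\,d\mu_t=\int\lag(t,\varphi_t(\x),\dot\varphi_t(\x))\,d\mu(\x)$ (the usual fact that $u$ accumulates the Lagrangian action along characteristics), and the second derivative of the average action has no definite sign in general for the Lagrangian (\ref{L0}); it involves the time-derivative of $\frac12|\dot\varphi_t|^2_{g(t)}+\frac12R(t,\varphi_t)$ along the flow, which can be positive or negative depending on the geometry. So the "only nontrivial step" you flagged is not merely a bookkeeping check — if it were actually needed, the argument would fail. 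Fortunately it is not needed once the sign of $\m_t$ is corrected.
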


We also obtain some similar results for functionals given by (\ref{fnal}) with $\cF(r)=r^q$, where $q$ is a constant. Using the above mentioned approach, we prove the following result which holds for all fibrewise strictly convex Hamiltonians $\ham$ defined above.

\begin{thm}\label{newmain1}
Assume that $\cF(r)=r^q$. Then the functional $\F$ defined in (\ref{fnal}) satisfies the following inequality
\[
\begin{split}
&\frac{d^2}{dt^2}\F(t,\mu_t)+(qb_1(t)+b_2(t))\frac{d}{dt}\F(t,\mu_t)+\frac{q\,b_1(t)^2}{4}+\frac{nb_2(t)^2}{4}\\
&\geq \int_M q\,(r^t_\x)^{q}\Big[\tr(\R^t_{\Phi_t(\x,d\f)})-b_2(t)\,\frac{d}{dt}v(t,\Phi_t(\x,d\f))\\
&\quad\quad -\frac{d^2}{dt^2}v(t,\Phi_t(\x,d\f))\Big] d\mu_0(\x)
\end{split}
\]
for each smooth displacement interpolation $\mu_t$ corresponding to the cost (\ref{prob:cost}).
\end{thm}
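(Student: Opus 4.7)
The plan is to follow the same strategy as in Theorem~\ref{main}, the only new feature being a quadratic first-order term in $\frac{d^2}{dt^2}\F$ which must be absorbed through an additional completion of square.

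First, I would use the pushforward identity $\mu_t=(\varphi_t)_*\mu_0$ with $\varphi_t(\x)=\pi(\Phi_t(\x,d\f))$ to transport the functional back to the fixed reference measure. Writing $r^t_\x:=\rho(t,\varphi_t(\x))$, we have
\[
\F(t,\mu_t)=\int_M (r^t_\x)^q\,d\mu_0(\x),
\]
and differentiation under the integral yields
\[
\frac{d}{dt}\F(t,\mu_t)=\int_M q(r^t_\x)^q\frac{d}{dt}\log r^t_\x\,d\mu_0,
\]
\[
\frac{d^2}{dt^2}\F(t,\mu_t)=\int_M q(r^t_\x)^q\left[\frac{d^2}{dt^2}\log r^t_\x+q\left(\frac{d}{dt}\log r^t_\x\right)^2\right]d\mu_0.
\]

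Second, I would reuse the pointwise curvature estimate which is the technical core of Theorem~\ref{main}. Expanding the differential of $\varphi_t$ in the canonical frame $E^{t,s},F^{t,s}$, relating its logarithmic derivative to the volume distortion $v$, and then invoking the trace inequality $\tr(M^2)\geq (\tr M)^2/n$ gives, for any function $b_2(t)$,
\[
\frac{d^2}{dt^2}\log r^t_\x+b_2\frac{d}{dt}\log r^t_\x+\frac{nb_2^2}{4}\geq \tr(\R^t_{\Phi_t(\x,d\f)})-\frac{d^2}{dt^2}v-b_2\frac{d}{dt}v
\]
at every $\x$. The new quadratic term $q(\tfrac{d}{dt}\log r^t_\x)^2$ is then absorbed by a second completion of square against an arbitrary function $b_1(t)$:
\[
q\left(\frac{d}{dt}\log r^t_\x+\tfrac{b_1}{2}\right)^2=q\left(\frac{d}{dt}\log r^t_\x\right)^2+qb_1\frac{d}{dt}\log r^t_\x+\tfrac{qb_1^2}{4},
\]
nonnegative for $q\geq 0$ and handled symmetrically for $q<0$.

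Adding the two pointwise inequalities, multiplying by the weight $q(r^t_\x)^q$, and integrating against $d\mu_0$ reassembles the second-order terms into $\frac{d^2}{dt^2}\F(t,\mu_t)$, combines the first-order terms into $(qb_1+b_2)\frac{d}{dt}\F(t,\mu_t)$, and produces the constants $\tfrac{qb_1^2}{4}+\tfrac{nb_2^2}{4}$; the curvature and volume-distortion contributions transfer directly to the right-hand side with the weight $q(r^t_\x)^q$, yielding the stated inequality. The principal obstacle is the pointwise estimate of the second step, which is the Hamiltonian analogue of the Bochner--Bakry--Emery inequality and depends on identifying the push-forward Jacobian under $\varphi_t$ with the canonical frame $F^{t,t}$ (as developed in Section~\ref{OPAHJE}) together with the curvature operator $\R^t$ introduced in Section~\ref{COHS}. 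Once that estimate is in hand, the passage from $\cF(r)=\log r$ to $\cF(r)=r^q$ is purely algebraic, as sketched above.
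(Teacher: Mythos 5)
Your proposal is correct and follows essentially the same strategy as the paper's proof. You work with $\frac{d}{dt}\log r^t_\x = -(\frac{d}{dt}v + \tr S(t))$ instead of the paper's direct expressions from Lemma \ref{Fpp}, but the content is identical: differentiate twice to obtain the $\ddot L + q(\dot L)^2$ structure, use the Riccati equation and the trace inequality $\tr(S^2)\geq(\tr S)^2/n$ to bound $\ddot L$ (as in Theorem \ref{main}), complete the square in $b_1$ to absorb the extra $q(\dot L)^2$ term, and integrate against $q(r^t_\x)^q\,d\mu_0$ to reassemble $\frac{d^2}{dt^2}\F$ and $\frac{d}{dt}\F$.
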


In the Riemannian case, the convexity of the functional (\ref{fnal}) with $\cF(r)=r^q$ along displacement interpolations is equivalent to the, so called, curvature-dimension condition. This result, which appeared in \cite{St}, can be obtained as a consequence of Theorem \ref{newmain1}. More precisely, let $\E_7$ be the functional defined by
\begin{equation}\label{ent7}
\E_7(\mu)=-\int_M\rho(x)^{-1/N}\,d\mu(x),
\end{equation}
where $\mu=\rho\,\vol$.

\begin{cor}\cite[Corollary 1.6]{St}
The Ricci curvature $\ric$ of the Riemannian metric $g$ is non-negative and the dimension of the manifold $M$ is $\leq N$ if and only if
\begin{equation}\label{Riccibdd}
\frac{d^2}{dt^2}\E_7(\mu_t)\geq 0
\end{equation}
for each smooth displacement interpolation $\mu_t$ corresponding to the cost (\ref{prob:cost}) with Lagrangian defined by (\ref{kinetic}).
\end{cor}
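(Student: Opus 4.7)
The plan is to reduce the statement to a pointwise concavity property of the Monge--Amp\`ere Jacobian of the optimal map and then analyse that via Jacobi fields along the transport trajectories. Writing $J_t(x)=\det(D_x\varphi_t)$, the change of variable formula $\rho_t(\varphi_t(x))=\rho_0(x)/J_t(x)$ recasts
\[
\E_7(\mu_t)=-\int_M\rho_0(x)^{1-1/N}J_t(x)^{1/N}d\vol(x),
\]
so the condition $\frac{d^2}{dt^2}\E_7(\mu_t)\geq 0$ is equivalent to the pointwise concavity of $t\mapsto J_t(x)^{1/N}$ along each transport ray.

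For the ``if'' direction I would fix a trajectory $\gamma_t=\varphi_t(x)$ and analyse the shape operator $S_t$ encoding the spread of neighbouring trajectories. The Ricatti equation $\frac{d}{dt}S_t+S_t^2+R_{\dot\gamma_t}=0$ together with $\frac{d}{dt}\log J_t=\tr S_t$ and Cauchy--Schwarz in dimension $n=\dim M$ gives
\[
\frac{d^2}{dt^2}\log J_t\leq-\frac{1}{n}\Bigl(\frac{d}{dt}\log J_t\Bigr)^2-\ric(\dot\gamma_t,\dot\gamma_t).
\]
Substituting into the identity $\frac{d^2}{dt^2}J_t^{1/N}=J_t^{1/N}\bigl[\tfrac{1}{N}\tfrac{d^2}{dt^2}\log J_t+\tfrac{1}{N^2}(\tfrac{d}{dt}\log J_t)^2\bigr]$ yields
\[
\frac{d^2}{dt^2}J_t^{1/N}\leq J_t^{1/N}\Bigl[-\frac{\ric(\dot\gamma_t,\dot\gamma_t)}{N}+\frac{n-N}{nN^2}\Bigl(\frac{d}{dt}\log J_t\Bigr)^2\Bigr],
\]
and both terms on the right are non-positive when $\ric\geq 0$ and $n\leq N$, delivering (\ref{Riccibdd}).

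For the converse I would localize. Given $x_0\in M$, a unit tangent vector $v\in T_{x_0}M$, and a symmetric endomorphism $A$ of $T_{x_0}M$, choose a smooth potential $\f$ with $\nabla\f(x_0)=v$ and $\hess\f(x_0)=A$. For short time and a smooth measure $\mu_0$ supported in a small neighbourhood of $x_0$, the map $\varphi_t(x)=\exp_x(t\nabla\f(x))$ defines a smooth displacement interpolation $\mu_t$ for the kinetic cost. Shrinking the support of $\mu_0$ to $\{x_0\}$ reduces the hypothesis $\frac{d^2}{dt^2}\E_7(\mu_t)|_{t=0}\geq 0$ to the pointwise bound $\frac{d^2}{dt^2}J_t^{1/N}\big|_{t=0,\,x_0}\leq 0$. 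Expanding at $t=0$ with $\frac{d}{dt}\log J_t\big|_0=\tr A$ and $\frac{d^2}{dt^2}\log J_t\big|_0=-\tr(A^2)-\ric(v,v)$ (from the Ricatti equation) turns this into the algebraic inequality
\[
(\tr A)^2\leq N\tr(A^2)+N\ric(v,v)\quad\text{for every symmetric }A.
\]
Setting $A=0$ yields $\ric(v,v)\geq 0$, and $v$ being arbitrary forces $\ric\geq 0$; setting $A=\lambda I_n$ gives $n(n-N)\lambda^2\leq N\ric(v,v)$, and letting $\lambda\to\infty$ forces $n\leq N$.

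The main obstacle is the converse direction: passing from the integrated second derivative of $\E_7$ to a sharp pointwise bound at $x_0$ requires concentrating $\mu_0$ while retaining enough regularity to differentiate under the integral sign, and producing a potential $\f$ whose $2$-jet at $x_0$ is prescribed arbitrarily while still defining an optimal map on some time interval. These are delicate but standard analytic points, essentially of the type addressed in the regularity discussion following Theorem \ref{main} and treated in \cite{CoMcSc1,Vi2}.
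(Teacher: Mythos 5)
Your proof is correct. Let me compare it with the paper's own route.

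The paper does not write out a dedicated proof; the corollary is obtained by specializing Lemma~\ref{Fpp} (equivalently, the proof of Theorem~\ref{newmain1}) to $\cF(r)=r^q$ with $q=-1/N$, $\m_t=\vol$, $v\equiv 0$, and $b_1=b_2=0$, and then invoking Theorem~\ref{sidethm} for the converse. In that language one gets
\[
\frac{d^2}{dt^2}\E_7(\mu_t)=\frac{1}{N}\int_M r^{-1/N}\Bigl[\tr(S(t)^2)-\frac{(\tr S(t))^2}{N}+\ric\Bigr]d\mu,
\]
which is non-negative once $\ric\ge 0$ and $n\le N$ by Cauchy--Schwarz on $S$; the converse is the localization argument of Theorem~\ref{sidethm}. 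Your argument has exactly the same mathematical content — the matrix Riccati equation (\ref{Riccati}), the Cauchy--Schwarz bound $\tr(S^2)\ge (\tr S)^2/n$, and localization at a point with a potential of prescribed $2$-jet — but you express everything directly through the Jacobian $J_t=\det(D\varphi_t)$ and the concavity of $J_t^{1/N}$, rather than through the density $\rho_t$ and the paper's abstract Hamiltonian curvature operator and canonical frame. This is the classical Cordero-Erausquin--McCann--Schmuckenschl\"ager style computation and it buys you a self-contained Riemannian proof that does not require developing the general Hamiltonian machinery; what you lose is the automatic extension to the more general Hamiltonians covered by the paper's Theorem~\ref{newmain1}. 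Two small remarks: the opening claim that the integral inequality is ``equivalent to pointwise concavity'' of $J_t^{1/N}$ is only the easy implication until you supply the localization, which you then do; and when you write $|S_0|^2=\tr(A^2)$ you are implicitly using that the optimal-transport Hessian is symmetric in the Riemannian kinetic case, which is fine here but is worth stating. The concentration-of-measure step in the converse is the same analytic hand-wave made in Theorem~\ref{sidethm}, so it is at the same level of rigor as the paper.
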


Next, we specialize Theorem \ref{newmain1} to the case of the Ricci flow. Let $\ham$ be the Hamiltonian defined in (\ref{mech}) with the metric $g$ satisfying (\ref{timeRiem}). Let $\E_8$ be the functional defined by
\[
\E_8(t,\mu)=\int_M\rho(t,x)^qd\mu(x)
\]
where $\mu=\rho(t,\cdot)\m_t$, $\m_t=e^{-k(t)u(t,\cdot)}\vol$, and $k$ is a functional parameter. In this case, Theorem \ref{newmain1} gives to the following result.

\begin{cor}\label{newmain2}
If the functions $c_1, c_2, b, U$ satisfy
\[
c_1k=-2,\quad \ddot k=-b_2\dot k,\quad 2\dot k=c_2k-b_2k,\quad U=-\frac{c_1^2}{8}R,
\]
then
\[
\begin{split}
&\frac{d^2}{dt^2}\E_8(t,\mu_t)+(q\,b_1(t)+b_2(t))\frac{d}{dt}\E_8(t,\mu_t)\\
&+q\left(\frac{n\dot c_2(t)}{2}+\frac{nc_2(t)^2}{4}+\frac{qb_1(t)^2}{4}+\frac{nb_2(t)^2}{4}\right)\E_8(t,\mu_t)\geq 0
\end{split}
\]
for each smooth displacement interpolation $\mu_t$ corresponding to the cost (\ref{prob:cost}) with Hamiltonian $\ham$ given by (\ref{mech}) and (\ref{timeRiem}).
\end{cor}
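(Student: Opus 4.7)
The plan is to specialize Theorem \ref{newmain1} to $\cF(r)=r^q$, reference measure $\m_t = e^{-k(t)u(t,\cdot)}\vol$, the mechanical Hamiltonian (\ref{mech}), and the metric evolution (\ref{timeRiem}), and to verify that under the four stated algebraic conditions the right-hand side of the resulting inequality collapses to a constant times $\int (r^t_\x)^q\, d\mu_0(\x)$. Since $\int (r^t_\x)^q\, d\mu_0(\x) = \E_8(t,\mu_t)$, that constant is then exactly the coefficient of $\E_8$ on the left-hand side of the claim.

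First I would extract the explicit forms of the Hamiltonian curvature trace $\tr(\R^t)$ and of the volume distortion $v$ for this Hamiltonian from Sections \ref{NMH} and \ref{NMHWERM}. The distortion $v(t,x,p)$ splits into a piece $-k(t)u(t,x)$ coming from the weight in $\m_t$ and a piece coming from the time-dependent Riemannian volume through the canonical frame; its first and second derivatives along the trajectory $t\mapsto\Phi_t(\x,d\f)$ are computed by combining the Hamilton--Jacobi equation $\dot u + \ham(t,x,du) = 0$ for the $u$-piece with (\ref{timeRiem}) for the metric piece. The curvature trace $\tr(\R^t)$ for a mechanical Hamiltonian decomposes into a Ricci piece coming from the kinetic term, a Hessian-of-$U$ piece, and corrections involving $\dot g$ and $\ddot g$.

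I would then substitute into Theorem \ref{newmain1} and collect terms. The condition $U = -\tfrac{c_1^2}{8}R$ is chosen so that the $\Delta U$-piece of $\tr(\R^t)$ cancels the $\partial_t R$-contribution that appears in $\ddot v$ through (\ref{timeRiem}); the normalization $c_1 k = -2$ aligns the $\ham(t,x,du)$-piece of $\dot v$ arising from Hamilton--Jacobi with the kinetic contributions of $\tr(\R^t)$; and the two $k$-equations $2\dot k = c_2 k - b_2 k$ and $\ddot k = -b_2 \dot k$ absorb the residual $\dot k$- and $\ddot k$-factors into the $b_2$-weighted first-derivative term already produced by Theorem \ref{newmain1}. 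The only surviving contributions are the $n\dot c_2/2 + nc_2^2/4$ terms from the second derivative of the $\log\det g$-piece of $v$, together with the completing-the-square constants $qb_1^2/4 + nb_2^2/4$ supplied by Theorem \ref{newmain1} itself. Multiplying by $q$ and moving this constant multiple of $\E_8(t,\mu_t)$ to the left-hand side yields the stated inequality.

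The principal obstacle is the algebraic bookkeeping in the previous step: one has to verify that $\hess u$, $|\ric|^2$, $\tr(\ric\cdot\hess u)$, and the mixed $\dot g$-curvature products all cancel exactly under the four relations. The key input is the identity for $\partial_t R$ under $\dot g = c_1\ric + c_2 g$, which here plays the same role that Perelman's identity for $\partial_t R$ under the (backward) Ricci flow plays in Corollary \ref{main4}; tracking signs correctly against the Hamilton--Jacobi derivatives of $u$ is the main source of computation, and once this cancellation is verified the reduction of the right-hand side to a multiple of $\E_8$ is automatic.
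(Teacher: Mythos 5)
Your overall plan is correct and matches the paper's proof: apply Theorem \ref{newmain1} with $\cF(r)=r^q$, substitute $\tr(\R^t)$ from Theorem \ref{Ricciflow} and the derivatives of the distortion $v=-k(t)u$, and verify that the four algebraic conditions collapse the bracket $\tr(\R^t)-b_2\dot v-\ddot v$ to $-\tfrac{n\dot c_2}{2}-\tfrac{nc_2^2}{4}$, which the paper records as its identity derived from (\ref{almost}) and (\ref{dv2}) in the proof of Corollary \ref{main2}. One small correction in your bookkeeping: there is no ``$\log\det g$-piece'' of $v$ to differentiate. The canonical frame $(f_1^{t,t},\dots,f_n^{t,t})$ is built from the time-$t$ metric, and the paper observes $\pi^*\vol_{(x,p)}(f_1^{t,t},\dots,f_n^{t,t})=1$, so $v=-k(t)u(t,\cdot)$ exactly. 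The surviving $-\tfrac{n\dot c_2}{2}-\tfrac{nc_2^2}{4}$ comes entirely from $\tr(\R^t)$, specifically from the $\tfrac{1}{2}\tr(\ddot g)$ and $\tfrac{1}{4}|\dot g|^2$ terms in Theorem \ref{curvtime} after substituting $\dot g=c_1\ric+c_2 g$; this is recorded in Theorem \ref{Ricciflow}. Likewise, the role of $U=-\tfrac{c_1^2}{8}R$ is primarily to cancel the $\langle\nabla R,\nabla u\rangle$ term (pairing $\Delta U$ in $\tr(\R^t)$ against $\langle\nabla U,\nabla u\rangle$ in $\ddot v$), rather than a $\partial_t R$ term, though $\dot U$ and the evolution of $R$ under (\ref{timeRiem}) do enter through (\ref{dotR}). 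These are attributions, not gaps; the cancellations you anticipate do occur and the reduction to a constant multiple of $\E_8(t,\mu_t)$ goes through as you describe.
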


We remark that, unlike Corollary \ref{main2}, there are two functional parameters, say $k$ and $b_1$, which are free in Corollary \ref{newmain2}. Finally, if we specialize to the case $k(t)=C_1t^m$ and $b_1(t)=C_2t^{-1}$, then we obtain the following corollaries.

\begin{cor}\label{newmain3}
If $m\neq 0$, $k(t)=C_1t^m$, $b_1(t)=C_2t^{-1}$, $c_1(t)=-\frac{2}{C_1t^{m}}$, $c_2(t)=\frac{m+1}{t}$, and $U(t,x)=-\frac{c_1^2}{8}R(t,x)$, then
\[
\begin{split}
&\frac{d^2}{dt^2}\E_8(t,\mu_t)+\frac{qC_2-m+1}{t}\frac{d}{dt}\E_8(t,\mu_t)\\
&\quad +\frac{q(2nm(m-1)+qC_2^2)}{4t^2}\E_8(t,\mu_t)\geq 0
\end{split}
\]
for each smooth displacement interpolation $\mu_t$ corresponding to the cost (\ref{prob:cost}) with Hamiltonian $\ham$ given by (\ref{mech}) and (\ref{timeRiem}).
\end{cor}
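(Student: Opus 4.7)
The plan is to derive Corollary \ref{newmain3} by direct specialization of Corollary \ref{newmain2}, treating it as an algebraic verification once the constraints on the functional parameters are solved for the ansatz $k(t)=C_1 t^m$ and $b_1(t)=C_2 t^{-1}$.

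First, I would compute $\dot k(t)=mC_1 t^{m-1}$ and $\ddot k(t)=m(m-1)C_1 t^{m-2}$. The constraint $c_1 k=-2$ immediately forces $c_1(t)=-2/(C_1 t^m)$, which matches the stated value and, together with $U=-c_1^2 R/8$, fixes $U$ as claimed. The relation $\ddot k=-b_2\dot k$ yields $b_2(t)=-(m-1)/t=(1-m)/t$; here the hypothesis $m\neq 0$ is what ensures $\dot k\not\equiv 0$, so that $b_2$ is actually determined by this equation. Finally, $2\dot k=(c_2-b_2)k$ gives
\[
c_2(t)=\frac{2\dot k}{k}+b_2=\frac{2m}{t}+\frac{1-m}{t}=\frac{m+1}{t},
\]
confirming the claimed value.

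Next, I would substitute these expressions into the coefficients appearing on the left-hand side of Corollary \ref{newmain2}. The first-order coefficient is
\[
qb_1(t)+b_2(t)=\frac{qC_2}{t}+\frac{1-m}{t}=\frac{qC_2-m+1}{t},
\]
as stated. For the zeroth-order coefficient, I would use $\dot c_2=-(m+1)/t^2$, $c_2^2=(m+1)^2/t^2$, $b_1^2=C_2^2/t^2$, and $b_2^2=(1-m)^2/t^2$ to compute
\[
\frac{n\dot c_2}{2}+\frac{nc_2^2}{4}+\frac{qb_1^2}{4}+\frac{nb_2^2}{4}=\frac{1}{4t^2}\bigl(-2n(m+1)+n(m+1)^2+qC_2^2+n(1-m)^2\bigr),
\]
and the $n$-dependent terms collapse:
\[
n(m+1)^2-2n(m+1)+n(1-m)^2=2nm^2-2nm=2nm(m-1),
\]
yielding the coefficient $q(2nm(m-1)+qC_2^2)/(4t^2)$ as asserted.

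The main potential obstacle is purely bookkeeping: ensuring the signs line up across the three scalar constraints of Corollary \ref{newmain2} (in particular the sign in $b_2=-(m-1)/t$), and tracking the telescoping $n(m+1)^2-2n(m+1)+n(1-m)^2=2nm(m-1)$ without error. Once these algebraic identities are verified, the inequality is immediate from Corollary \ref{newmain2} applied with the specified parameters, with no further analytic input required.
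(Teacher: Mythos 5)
Your proposal is correct and is exactly the argument the paper intends: Corollary \ref{newmain3} is obtained by substituting the ansatz $k(t)=C_1t^m$, $b_1(t)=C_2t^{-1}$ into the constraints of Corollary \ref{newmain2}, solving for $c_1$, $b_2$, $c_2$, and $U$ (noting $m\neq 0$ is needed so $\dot k\not\equiv 0$ determines $b_2$), and then simplifying the resulting coefficients, where the key cancellation $n(m+1)^2-2n(m+1)+n(m-1)^2=2nm(m-1)$ you carry out is the only nontrivial algebra.
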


\begin{cor}\label{newmain3}
If $k\equiv C_1\neq 0$, $b_1(t)=C_2t^{-1}$, $c_1\equiv-\frac{2}{C_1}$, $c_2=b_2\equiv \frac{C_3}{t}$, and $U(t,x)=-\frac{c_1^2}{8}R(t,x)$, then
\[
\begin{split}
&\frac{d^2}{dt^2}\E_8(t,\mu_t)+\left(\frac{qC_2+C_3}{t}\right)\frac{d}{dt}\E_8(t,\mu_t)\\
&+q\left(\frac{qC_2^2+2nC_3^2-2nC_3}{4t^2}\right)\E_8(t,\mu_t)\geq 0
\end{split}
\]
for each smooth displacement interpolation $\mu_t$ corresponding to the cost (\ref{prob:cost}) with Hamiltonian $\ham$ given by (\ref{mech}) and (\ref{timeRiem}).
\end{cor}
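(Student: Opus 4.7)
The statement is a direct specialization of Corollary \ref{newmain2}, so the plan is simply to check that the prescribed choices of functional parameters satisfy the four algebraic constraints listed there, and then to simplify the resulting coefficients.

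First I would verify the constraints. With $k\equiv C_1$ one has $\dot k=\ddot k=0$, and $c_1\equiv -2/C_1$ gives $c_1k=-2$ immediately. The identity $\ddot k=-b_2\dot k$ holds trivially since both sides vanish. The third constraint $2\dot k=(c_2-b_2)k$ forces $c_2=b_2$, which is exactly the prescription $c_2=b_2\equiv C_3/t$. Finally, $U=-c_1^2 R/8=-R/(2C_1^2)$ is the given choice of potential. Thus all hypotheses of Corollary \ref{newmain2} are met.

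Next I would substitute into the conclusion of Corollary \ref{newmain2}. The first-order coefficient is $qb_1+b_2=(qC_2+C_3)/t$, which matches. For the zeroth-order coefficient, I would compute term-by-term using $\dot c_2=-C_3/t^2$, $c_2^2=C_3^2/t^2$, $b_1^2=C_2^2/t^2$, and $b_2^2=C_3^2/t^2$:
\[
\frac{n\dot c_2}{2}+\frac{nc_2^2}{4}+\frac{qb_1^2}{4}+\frac{nb_2^2}{4}=\frac{1}{4t^2}\bigl(-2nC_3+nC_3^2+qC_2^2+nC_3^2\bigr)=\frac{qC_2^2+2nC_3^2-2nC_3}{4t^2},
\]
which is exactly the coefficient appearing in the claimed inequality. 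Multiplying by $q$ and assembling the three terms yields the stated differential inequality for $\E_8(t,\mu_t)$.

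There is no real obstacle here beyond the algebraic bookkeeping; the substantive work is already encapsulated in Corollary \ref{newmain2} (and ultimately Theorem \ref{newmain1}). The only point to be careful about is the sign in $\dot c_2$, which produces the $-2nC_3$ contribution and is what allows the coefficient to be negative when $C_3\in(0,1)$, reflecting a genuine concavity of $\E_8$ in that regime rather than a vacuous bound.
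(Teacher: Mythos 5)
Your proposal is correct and matches the paper's approach exactly: the paper presents this corollary as a direct specialization of Corollary \ref{newmain2} with no further proof, and your verification of the four algebraic constraints together with the computation of the coefficients $\dot c_2=-C_3/t^2$, $c_2^2=b_2^2=C_3^2/t^2$, $b_1^2=C_2^2/t^2$ reproduces the stated inequality faithfully.
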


Finally, we also consider the following Hamiltonian $\ham$ motivated by the recent work \cite{Le3} of the author
\[
\ham(t,x,p)=\frac{1}{2}\sum_{i,j}g^{ij}(x)p_ip_j+\sum_{i,j}g^{ij}p_iW_{x_j}(t,x)+U(t,x)
\]
where $g$ is a Riemannian metric, $U$ and $W$ are time-dependent potentials on the manifold $M$. The corresponding Lagrangian $\lag$ is given by
\begin{equation}\label{lagdrift}
\begin{split}
\lag(t,x,v)&=\frac{1}{2}\sum_{i,j}g_{ij}(x)(v_i-\sum_sg^{is}W_{x_s})(v_j-\sum_lg^{jl}W_{x_l})-U(t,x)\\
&=\frac{1}{2}|v-\nabla W|^2-U(t,x).
\end{split}
\end{equation}

\begin{thm}\label{neweg}
Assume that the Riemannian metric $g$ has non-negative Ricci curvature and the potentials $U$ and $W$ satisfy the following condition
\[
\Delta\dot W+\frac{1}{2}\Delta|\nabla W|^2-\Delta U\leq 0.
\]
Then
\[
\begin{split}
&\frac{d^2}{dt^2}\mathcal E_1(\mu_t)\geq 0
\end{split}
\]
for each smooth displacement interpolation $\mu_t$ corresponding to the cost (\ref{prob:cost}) with Lagrangian given by (\ref{lagdrift}).
\end{thm}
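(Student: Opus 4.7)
The plan is to apply Theorem~\ref{main} with $\cF(r)=\log r$, $b\equiv 0$, and $\m_t=\vol$ (the time-independent Riemannian volume of $g$), which identifies $\F$ with the classical entropy $\E_1$ and reduces the right hand side of (\ref{conclude}) to showing that, pointwise along each Hamiltonian trajectory,
\[
\tr(\R^t_{\Phi_t(\x,d\f)})-\tfrac{d^2}{dt^2}v(t,\Phi_t(\x,d\f))\ge 0.
\]

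The key simplification is a time-dependent symplectic change of momentum. Setting $q=p+dW(t,x)$, generated by $F(t,x,q)=x\cdot q-W(t,x)$, the symmetry of $W_{x_ix_j}$ makes the map symplectic at each fixed $t$, while $\partial_t F=-\dot W$ contributes an extra $-\dot W$ to the transported Hamiltonian. A direct calculation turns $\ham$ into the purely mechanical Hamiltonian
\[
\tilde\ham(t,x,q)=\tfrac{1}{2}|q|^2+V(t,x),\qquad V:=U-\tfrac{1}{2}|\nabla W|^2-\dot W.
\]
Because the transformation lifts the identity on $M$, it preserves $\pi$, the vertical subbundle $\ver$, and the pulled-back volume $\pi^*\vol$; from the construction of the canonical frame in Section~\ref{COHS}, the curvature $\R^t$ and volume distortion $v$ for $\vec\ham$ at $(x,p)$ therefore agree with the corresponding objects for $\vec{\tilde\ham}$ at $(x,q)$.

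Reduced to the mechanical case, I would invoke the canonical-frame computations of Sections~\ref{NMH}--\ref{NMHWERM} to establish the pointwise identity
\[
\tr(\R^t_{(x,q)})-\tfrac{d^2}{dt^2}v(t,\Phi_t(\x,d\f))=\ric(\dot x,\dot x)+\Delta V(t,x),
\]
where $\dot x=q^\sharp$ is the base velocity. Substituting the definition of $V$ turns the bracket into
\[
\ric(\dot x,\dot x)+\Delta U-\tfrac{1}{2}\Delta|\nabla W|^2-\Delta\dot W,
\]
which is non-negative under the two hypotheses of the theorem, completing the proof.

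The main obstacle is the mechanical curvature identity itself: constructing the canonical frame for $\vec{\tilde\ham}$ in terms of Jacobi-type fields for a \emph{time-dependent} mechanical system, checking that the Hessian of $V$ contributes the expected $\hess V$ correction to the Riemannian curvature operator (so that its trace is $\Delta V$), and confirming that $\tr(\R^t)-\ddot v$ collapses cleanly to $\ric(\dot x,\dot x)+\Delta V$ with no leftover terms involving $\dot V$ or mixed derivatives of the change of variables. A secondary point to verify is that the passage from $\vec\ham$ to $\vec{\tilde\ham}$ respects the frame definition along the flow (not only pointwise), so that the reduction to the mechanical case really applies along the trajectory $t\mapsto\Phi_t(\x,d\f)$.
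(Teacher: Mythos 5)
Your proposal is correct, but it takes a genuinely different route from the paper's proof. The paper handles the drift Hamiltonian (\ref{timeHamdrift}) head-on: Theorem~\ref{curvtimedrift} runs the frame computation of Proposition~\ref{Rformula} from scratch on $\vec\ham$ (and Theorem~\ref{Ricciflow2} specializes it to $\dot g=c_1\ric+c_2 g$), arriving at
\[
\tr(\R^t_{(\x,\p)})=\ric(\p+\nabla W,\p+\nabla W)-\tfrac12\Delta|\nabla W|^2-\Delta\dot W+\Delta U
\]
once $\dot g=0$; with $\m_t=\vol$, the horizontal frame still projects to the unit orthonormal frame so $v\equiv 0$, and Theorem~\ref{neweg} follows from Theorem~\ref{main}. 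You instead conjugate by the time-dependent fiber shift $\Psi_t(x,p)=(x,p+dW_t)$, which converts $\ham$ into the mechanical Hamiltonian $\tfrac12|q|^2+V$ with $V=U-\tfrac12|\nabla W|^2-\dot W$, and then quote the already-established mechanical curvature formula (Proposition~\ref{mechanicalcurv} / Theorem~\ref{curvtime}). This avoids redoing the long computation and conceptually explains \emph{why} the drift enters only through $\p\mapsto\p+\nabla W$ and through $\Delta V$. The two obstacles you flag are in fact already handled by the paper's machinery: (i) the mechanical formula $\tr(\R^t)=\ric(\p,\p)+\Delta V$ holds for a time-dependent potential because $\vec{\dot V}\in\ver$ kills $[\vec{\dot\ham},V_i]$ in the proof of Proposition~\ref{mechanicalcurv}; and (ii) the invariance of $\R^t$ and $v$ follows because $\Psi_t$ is, for each fixed $t$, a fibre-preserving symplectomorphism over the identity of $M$, so the conjugated flows $\tilde\Phi_{t,s}=\Psi_t\circ\Phi_{t,s}\circ\Psi_s^{-1}$ give $\tilde J^{t,s}_{\Psi_s(x,p)}=d\Psi_s\bigl(J^{t,s}_{(x,p)}\bigr)$, i.e.\ the Jacobi curves, hence the canonical frames and curvature operators, are related by a single linear symplectomorphism fixing $\ver$ and $d\pi$; in particular $\tr(\R^t)$ is invariant and the pushforward frame under $d\pi$ (hence $v$) is unchanged. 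So your reduction really does apply along the trajectory, and the argument closes.
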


\bigskip

\section{Curvature of Hamiltonian Systems}\label{COHS}

In this section, we give a brief discussion on the curvature of a Hamiltonian system introduced in \cite{AgGa}. For a more detail  discussion, see for instance \cite{Ag,Le2}.

Let $T^*M$ be the cotangent bundle of a $n$-dimensional manifold $M$. Let $(x_1,...,x_n)$ and $(x_1,...,x_n,p_1,...,p_n)$ denote systems of local coordinates on $M$ and $T^*M$, respectively. Let $\omega:=\sum_{i=1}^ndp_i\wedge dx_i$ be the canonical symplectic form. Let $\ham:\Real\times T^*M\to\Real$ be a smooth function, called Hamiltonian, and let $\vec\ham$ be the corresponding Hamiltonian vector field defined by $\omega(\vec\ham,\cdot)=-d\ham(\cdot)$, where $d$ denotes the exterior differential on $T^*M$. In local coordinates, $\vec\ham$ is given by
\begin{equation}\label{Ham}
\dot x_i=\ham_{p_i},\quad \dot p_i=-\ham_{x_i}.
\end{equation}
Recall that the dots here and throughout this paper denotes the derivative with respect to time variables $t$, $s$, or $\tau$. We assume that the (possibly time-dependent) vector field $\vec\ham$ is complete and the corresponding (possibly time-dependent) flow is denoted by $\Phi_{t,s}$ (i.e. $\frac{d}{dt} \Phi_{t,s}=\vec\ham(\Phi_{t,s})$ and $\Phi_{s,s}$ is the identity map).

Let $\pi:T^*M\to M$ be the projection to the base $M$ and let $\ver$ be the vertical bundle defined as the kernel of the map $d\pi$. Let $(\x,\p)$ be a point in $T^*M$ and let $J_{(\x,\p)}^{t,s}$ be the family of subspaces in $T_{(\x,\p)} T^*M$, called Jacobi curve, defined by
\[
J^{t,s}_{(\x,\p)}=d\Phi_{t,s}^{-1}(\ver_{\Phi_{t,s}(\x,\p)}).
\]

Assume that the Hamiltonian $\ham$ is fibrewise strictly convex (i.e. $p\mapsto \ham(t,x,p)$ is strictly convex for each time $t$ and each point $x$). Then the family of bilinear forms $\left<\cdot,\cdot\right>^{\tau,s}$ defined by
\[
\left<v,v\right>^{\tau,s}:=\omega\left(\frac{d}{dt} e^{t,s},e^{t,s}\right)\Big|_{t=\tau}
\]
is an inner product defined on the subspace $J^{t,s}_{(\x,\p)}$, where $e^{t,s}$ is contained in $J^{t,s}$ for each time $t$ in a neighborhood of $\tau$ and $e^{\tau,s}=v$.

\begin{prop}\label{canonical}
Assume that the Hamiltonian $\ham$ is fibrewise strictly convex. Then there exists a family of bases $e^{t,s}_1,...,e^{t,s}_n$ contained in $J^{t,s}$ orthonormal with respect to the inner product $\left<\cdot,\cdot\right>^{t,s}$ such that $\ddot e^{t,s}_i$ is contained in $J^{t,s}$. Moreover, if $\bar e^{t,s}_1,...,\bar e^{t,s}_n$ is another such family, then there is an orthonormal matrix $O$ such that
\[
\bar e^{t,s}_i=\sum_{j=1}^nO_{ij}e^{t,s}_j.
\]
\end{prop}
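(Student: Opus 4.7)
The plan is to produce the canonical frame by a time-dependent gauge transformation of an arbitrary smooth basis of the Jacobi curve $J^{t,s}$, and to read off both existence and uniqueness from a single linear ODE. Fix $s$ throughout, and choose any smooth family $\tilde e^{t,s}_{1},\dots,\tilde e^{t,s}_{n}$ spanning $J^{t,s}$. Setting $\tilde f_{i}:=\dot{\tilde e}_{i}$, the fibrewise strict convexity of $\ham$ makes the symmetric matrix $G_{ij}:=\omega(\tilde f_{i},\tilde e_{j})$ positive definite, so $\{\tilde e_{i},\tilde f_{i}\}$ is a basis of $T_{(\x,\p)}T^{*}M$, and we may write
\[
\dot{\tilde f}_{i}=\sum_{j}P_{ij}\tilde e_{j}+\sum_{j}Q_{ij}\tilde f_{j}
\]
for smooth matrix-valued functions $P(t)$, $Q(t)$.

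I would then look for the canonical frame in the form $e^{t,s}=A(t)\tilde e^{t,s}$ for an invertible matrix $A(t)$. A direct computation gives
\[
\ddot e=(\ddot A+AP)\tilde e+(2\dot A+AQ)\tilde f,
\]
so the condition $\ddot e^{t,s}_{i}\in J^{t,s}$ is equivalent to the linear ODE $\dot A=-\tfrac{1}{2}AQ$. I would solve this ODE from an initial value $A(s)$ with $A(s)G(s)A(s)^{T}=I$ (such an $A(s)$ exists and is unique up to left multiplication by an orthogonal matrix because $G(s)$ is positive definite), and then verify that $A(t)G(t)A(t)^{T}$ is conserved. From
\[
\tfrac{d}{dt}(AGA^{T})=A\bigl(\dot G-\tfrac{1}{2}(QG+GQ^{T})\bigr)A^{T},
\]
this reduces to the identity $\dot G=\tfrac{1}{2}(QG+GQ^{T})$, which in turn follows from
\[
\dot G_{ij}=\omega(\dot{\tilde f}_{i},\tilde e_{j})+\omega(\tilde f_{i},\tilde f_{j})=(QG)_{ij}+\omega(\tilde f_{i},\tilde f_{j})
\]
by taking the symmetric part and noting that $\omega(\tilde f_{i},\tilde f_{j})$ is antisymmetric while $\dot G$ is symmetric. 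This yields the existence of a frame $e^{t,s}$ with all three required properties.

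For uniqueness, given a second such frame $\bar e^{t,s}=O(t)e^{t,s}$, orthonormality of both frames forces $O(t)O(t)^{T}=I$ pointwise (a direct consequence of $\omega(\dot e_{i},e_{j})=\delta_{ij}$ combined with $J^{t,s}$ being Lagrangian). Differentiating $\bar e=Oe$ twice gives $\ddot{\bar e}=\ddot O\,e+2\dot O\,\dot e+O\ddot e$; the terms $\ddot{\bar e}$, $\ddot O\,e$, and $O\ddot e$ all lie in $J^{t,s}$, so $\dot O\,\dot e\in J^{t,s}$. Pairing with $e_{k}$ via $\omega$ and using $\omega(\dot e_{j},e_{k})=\delta_{jk}$ then forces $\dot O=0$, so $O$ is constant in $t$, giving a single orthogonal matrix relating the two frames.

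The main obstacle I expect is establishing the compatibility identity $\dot G=\tfrac{1}{2}(QG+GQ^{T})$, which is precisely the condition that makes the gauge ODE preserve orthonormality; everything else reduces to standard linear ODE theory and a careful bookkeeping of symmetric versus antisymmetric contributions under $\omega$.
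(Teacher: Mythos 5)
Your argument is correct. It is essentially the same gauge-ODE construction the paper records in Proposition \ref{compute} (the full proof of Proposition \ref{canonical} itself being delegated to \cite{Le2}): when you specialize your starting frame to one orthonormal for $\left<\cdot,\cdot\right>^{t,s}$, your matrix $Q$ becomes $-\Omega^{t,s}$, and the gauge equation $\dot A = -\tfrac12 AQ$ is precisely $\dot O^{t,s}=\tfrac12 O^{t,s}\Omega^{t,s}$.
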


For the proof of the above proposition, see \cite[Proposition 3.2]{Le2}. We define $f^{t,s}_i=\dot e^{t,s}_i$ and call the frame
\[
(e^{t,s}_1,...,e^{t,s}_n,f^{t,s}_1,...,f^{t,s}_n)^T
\]
a canonical frame of $J^{t,s}$. Recall that there are two time parameters $t$ and $s$ for the canonical frame $e^{t,s}_i$ and $f^{t,s}_i$. The dots in $\dot e^{t,s}_i$ and $\dot f^{t,s}_i$ always denote the derivative with respect to the first time parameter, which is $t$ in this case. Let $\bar E^{t,s}=(\bar e^{t,s}_1,...,\bar e^{t,s}_n)^T$ be a family of orthonormal bases of $J^{t,s}$. The following gives a procedure for obtaining a canonical frame. See \cite[Lemma 3.5]{Le2} for a proof.

\begin{prop}\label{compute}
Let $\Omega^{t,s}$ be the matrix defined by $\Omega^{t,s}_{ij}=\omega(\dot{\bar e}^{t,s}_i, \dot{\bar e}^{t,s}_j)$ and let $O^{t,s}$ be a solution of the equation
\[
\dot O^{t,s}=\frac{1}{2}O^{t,s}\Omega^{t,s}.
\]
Then $O^{t,s}\bar E^{t,s}$ defines a canonical frame of $J^{t,s}$.
\end{prop}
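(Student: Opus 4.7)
The plan is to check directly that the frame $E^{t,s} := O^{t,s}\bar E^{t,s}$ satisfies the two conditions characterizing a canonical frame in Proposition~\ref{canonical}: orthonormality with respect to $\langle\cdot,\cdot\rangle^{t,s}$, and $\ddot e_i^{t,s} \in J^{t,s}$. The main observation driving everything is that the matrix $\Omega^{t,s}$ is skew-symmetric, since $\omega$ itself is. Together with the prescribed ODE $\dot O = \tfrac{1}{2}O\Omega$, this yields $\frac{d}{dt}(OO^T) = \tfrac{1}{2}O(\Omega + \Omega^T)O^T = 0$, so choosing an orthogonal initial value (say $O^{s,s}=I$) forces $O^{t,s}$ to remain orthogonal for all $t$.

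Given orthogonality, $E = O\bar E$ inherits orthonormality with only a short calculation: expanding $\dot e_i = \sum_k(\dot O_{ik}\bar e_k + O_{ik}\dot{\bar e}_k)$ and computing $\langle e_i, e_j\rangle^{t,s} = \omega(\dot e_i, e_j)$, the terms involving $\omega(\bar e_k, \bar e_l)$ vanish because $\bar E \subset J^{t,s}$ is Lagrangian, and the remaining terms collapse to $(OO^T)_{ij} = \delta_{ij}$ using $\omega(\dot{\bar e}_k, \bar e_l) = \langle\bar e_k, \bar e_l\rangle^{t,s} = \delta_{kl}$.

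The heart of the proof is the inclusion $\ddot e_i^{t,s} \in J^{t,s}$, which I would reduce to a symplectic condition before computing. Since $J^{t,s}$ is Lagrangian of half dimension, membership is equivalent to $\omega(\ddot e_i, e_j) = 0$ for every $j$, and differentiating the just-established identity $\omega(\dot e_i, e_j) = \delta_{ij}$ in $t$ gives
\[
\omega(\ddot e_i, e_j) = -\omega(\dot e_i, \dot e_j),
\]
so it suffices to show $\omega(\dot e_i, \dot e_j) = 0$. Expanding this bilinear expression splits it into four terms; using Lagrangianity, the definition of $\langle\cdot,\cdot\rangle^{t,s}$, and the antisymmetry identity $O\dot O^T = -\dot O O^T$ coming from orthogonality, one obtains
\[
\omega(\dot e_i, \dot e_j) = -2(\dot O O^T)_{ij} + (O\Omega O^T)_{ij},
\]
and substituting $\dot O O^T = \tfrac{1}{2}O\Omega O^T$ from the ODE makes this vanish identically.

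The main obstacle is bookkeeping rather than any deep difficulty: one has to deploy three distinct structural facts---the Lagrangian property of $J^{t,s}$, the characterization of $\langle\cdot,\cdot\rangle^{t,s}$ via $\omega$ and the first derivative along $J^{t,s}$, and the skew-symmetry $O\dot O^T = -\dot O O^T$ induced by orthogonality---without confusing them. The coefficient $\tfrac{1}{2}$ in the ODE is precisely what is needed to balance the factor of $2$ arising from the two cross terms $\omega(\bar e_k,\dot{\bar e}_l)$ and $\omega(\dot{\bar e}_k,\bar e_l)$ in the expansion of $\omega(\dot e_i, \dot e_j)$, which makes it plausible that the choice of constant in the proposition is essentially forced.
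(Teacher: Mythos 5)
Your proof is correct and complete. The paper itself does not give a proof of this proposition; it simply cites Lemma~3.5 of \cite{Le2}, so there is no in-text argument to compare against. Your verification covers exactly the two conditions required by Proposition~\ref{canonical}: orthonormality of $O^{t,s}\bar E^{t,s}$ (via skew-symmetry of $\Omega^{t,s}$, hence $\frac{d}{dt}(OO^T)=0$, combined with Lagrangianity of $J^{t,s}$ and the normalization $\omega(\dot{\bar e}_k,\bar e_l)=\delta_{kl}$), and the membership $\ddot e_i^{t,s}\in J^{t,s}$ (reduced, via Lagrangianity and differentiation of $\omega(\dot e_i,e_j)=\delta_{ij}$, to showing $\omega(\dot e_i,\dot e_j)=0$, which the ODE $\dot O=\tfrac12 O\Omega$ forces). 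One small but correct observation you make explicitly is that the proposition implicitly requires an orthogonal initial condition such as $O^{s,s}=I$; without that, $O^{t,s}\bar E^{t,s}$ need not be orthonormal, and the subsequent Proposition~\ref{Rformula} in the paper does indeed take $O^{s,s}=I$.
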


If $(e^{t,s}_1,...,e^{t,s}_n,f^{t,s}_1,...,f^{t,s}_n)^T$ is a canonical frame of the Jacobi curve $J^{t,s}_{(\x,\p)}$, then the operators $\R^{t,s}_{(\x,\p)}:J^{t,s}_{(\x,\p)}\to J^{t,s}_{(\x,\p)}$ defined by
\[
\R^{t,s}_{(\x,\p)} e_i^{t,s}=-\ddot e_i^{t,s}
\]
are the curvature operators of the Jacobi curve $J^{t,s}$. The operator $\R^t_{(\x,\p)}:=\R^{t,t}_{(\x,\p)}:\ver_{(\x,\p)}\to\ver_{(\x,\p)}$ is the curvature operator of the Hamiltonian system $\vec\ham$.

The operators $\R^{t,s}$ are symmetric with respect to the inner product $\left<\cdot,\cdot\right>^{t,s}$. It also follows from the definition of $\R$ that the following holds
\[
\R^{t,\tau}_{\Phi_{\tau,s}(\x,\p)}=d\Phi_{\tau,s}\R_{(\x,\p)}^{t,s}(d\Phi_{\tau,s})^{-1}.
\]
In particular, the following holds
\begin{equation}\label{curvconn}
\R^t_{\Phi_{t,s}(\x,\p)}=d\Phi_{t,s}\R_{(\x,\p)}^{t,s}(d\Phi_{t,s})^{-1}
\end{equation}
and it shows that the curvature $\R^{t,s}$ of the Jacobi curve $J^{t,s}$ is completely determined by the curvature $\R^t$ of the Hamiltonian system $\vec\ham$.

Next, we discuss how to compute the curvature operators. Let $V_1^{t},...,V_n^{t}$ be a family of (local) vector fields contained in the vertical bundle $\ver$ which is orthonormal with respect to the Riemannian metric on $\ver$ defined by
\[
v\mapsto \sum_{i,j}\ham_{p_ip_j}(t,x,p)v_iv_j.
\]
A computation using local coordinates shows that this Riemannian metric is also given by $v\mapsto\omega([\vec\ham,V],V)$, where $V$ is any extension of $v$ to a vector field contained in $\ver$.

Let $\bar e_i^{t,s}$ be defined by
\[
\bar e_i^{t,s}=(\Phi_{t,s})^*V_i^t.
\]
It follows that
\[
\begin{split}
\left<\bar e_i^{t,s},\bar e_j^{t,s}\right>^{t,s}_{(\x,\p)}&=\omega_{(\x,\p)}(\dot{\bar e}_i^{t,s},\bar e_j^{t,s})\\
&=\omega_{\Phi_{t,s}(\x,\p)}([\vec\ham,V_i^t],V_j^t)\\
&=\delta_{ij}.
\end{split}
\]

Therefore, we can apply Proposition \ref{compute} and obtain the following result.

\begin{prop}\label{Rformula}
Let $[\R^t_{(\x,\p)}]$ be the matrix with $ij^{th}$ entry defined by $\left<\R^t_{(\x,\p)} V_i^t,V_j^t\right>$. Then it satisfies
\[
\begin{split}
&[\R^t_{(\x,\p)}]\bar E^{t,t}=-\frac{1}{4}(\Omega^{t,t})^2\bar E^{t,t}-\frac{1}{2}\dot\Omega^{t,t}\bar E^{t,t}-\Omega^{t,t}\dot{\bar E}^{t,t}-\ddot{\bar E}^{t,t},\\
&\dot{\bar e}_i^{t,t}=[\vec\ham,V_i^t]+\dot V_i^t,\\
&\ddot{\bar e}_i^{t,t}=[\vec\ham,[\vec\ham,V_i^t]]+2[\vec\ham,\dot V_i^t]+[\vec{\dot\ham},V_i^t]+\ddot V_i^t,\\
&\Omega^{t,t}=\omega(\dot{\bar e}_i^{t,t}, \dot{\bar e}_j^{t,t}),\\
&\dot\Omega^{t,t}=\omega(\ddot{\bar e}_i^{t,t}, \dot{\bar e}_j^{t,t})+\omega(\dot{\bar e}_i^{t,t}, \ddot{\bar e}_j^{t,t}),
\end{split}
\]
where $\bar E^{t,t}=(\bar e^{t,t}_1,...,\bar e^{t,t}_n)^T$.
\end{prop}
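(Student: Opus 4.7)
The plan is to apply Proposition \ref{compute} to the frame $\bar E^{t,s}$, extract a canonical frame, and then compute $-\ddot e^{t,t}$. The first step is to verify that $\{\bar e_i^{t,s}\}$ is orthonormal with respect to $\left<\cdot,\cdot\right>^{t,s}$. Since $\Phi_{t,s}$ is the flow of a Hamiltonian vector field, pullback by $\Phi_{t,s}$ preserves $\omega$, so
\[
\omega(\dot{\bar e}_i^{t,s},\bar e_j^{t,s})=\omega\bigl([\vec\ham,V_i^t]+\dot V_i^t,\,V_j^t\bigr)\big|_{\Phi_{t,s}(\x,\p)}.
\]
The term $\omega(\dot V_i^t,V_j^t)$ vanishes because $V_j^t$ and $\dot V_i^t$ both lie in $\ver$ and the vertical distribution is Lagrangian; the remaining term $\omega([\vec\ham,V_i^t],V_j^t)=\delta_{ij}$ by the normalization of $V_i^t$ recalled just before the proposition.

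Second, I would invoke Proposition \ref{compute}: the canonical frame is $E^{t,s}=O^{t,s}\bar E^{t,s}$ where $\dot O^{t,s}=\tfrac12 O^{t,s}\Omega^{t,s}$, and I fix the initial condition $O^{s,s}=I$, which is admissible because $\bar E^{s,s}=V^s$ is already vertical and orthonormal, so it coincides with a canonical frame at the diagonal. Using the product rule together with $\dot O=\tfrac12 O\Omega$ and hence $\ddot O=\tfrac14 O\Omega^2+\tfrac12 O\dot\Omega$, one expands
\[
\ddot E^{t,s}=\ddot O\bar E+2\dot O\dot{\bar E}+O\ddot{\bar E}=O\Bigl(\tfrac14\Omega^2\bar E+\tfrac12\dot\Omega\bar E+\Omega\dot{\bar E}+\ddot{\bar E}\Bigr).
\]
Setting $s=t$ so that $O^{t,t}=I$ and using $[\R^t]\bar E^{t,t}=-\ddot E^{t,t}$ (which holds because $e_i^{t,t}=\bar e_i^{t,t}$) yields the first displayed formula.

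Third, I would compute $\dot{\bar e}$ and $\ddot{\bar e}$ from $\bar e_i^{t,s}=(\Phi_{t,s})^*V_i^t$ using the standard formula for the $t$-derivative of the pullback of a time-dependent vector field $W^t$ by a time-dependent Hamiltonian flow,
\[
\tfrac{d}{dt}(\Phi_{t,s})^*W^t=(\Phi_{t,s})^*\bigl([\vec\ham,W^t]+\dot W^t\bigr).
\]
Applying this once with $W^t=V_i^t$ and evaluating at $s=t$ gives $\dot{\bar e}_i^{t,t}=[\vec\ham,V_i^t]+\dot V_i^t$. Applying it again with $W^t=[\vec\ham,V_i^t]+\dot V_i^t$ and using that $\dot{[\vec\ham,V_i^t]}=[\vec{\dot\ham},V_i^t]+[\vec\ham,\dot V_i^t]$ (since $\ham\mapsto\vec\ham$ is linear) produces exactly the four terms in the claimed expression for $\ddot{\bar e}_i^{t,t}$, the commutator $[\vec{\dot\ham},V_i^t]$ being precisely the contribution from the time-dependence of $\ham$. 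The formulas for $\Omega^{t,t}$ and $\dot\Omega^{t,t}$ are immediate from the definition and differentiation under $\omega$.

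The main obstacle is purely notational: keeping the two time parameters straight. The dot denotes the derivative in the first variable with $s$ held fixed, so one must not confuse $\dot{\bar e}^{t,t}$ or $\dot\Omega^{t,t}$ with total derivatives along the diagonal $t=s$; this is what legitimizes using $\dot O=\tfrac12 O\Omega$ throughout the $\ddot E$ computation before restricting to $s=t$. Once the conventions are fixed and Cartan's formula for pullbacks of time-dependent fields along a time-dependent flow is in hand, the argument is mechanical.
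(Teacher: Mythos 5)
Your proposal is correct and takes essentially the same route as the paper: set $E^{t,s}=O^{t,s}\bar E^{t,s}$ with $O^{s,s}=I$, use $\dot O=\tfrac12 O\Omega$ from Proposition \ref{compute} together with the product rule to expand $\ddot E^{t,s}$, evaluate at $s=t$, and identify $[\R^t]\bar E^{t,t}=-\ddot E^{t,t}$. The additional formulas for $\dot{\bar e}$, $\ddot{\bar e}$, $\Omega$, and $\dot\Omega$ that you derive via the time-dependent pullback identity are exactly the ones the paper records in the proposition statement and uses implicitly.
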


\begin{proof}
Let $O^{t,s}$ be as in Proposition \ref{compute} with initial condition $O^{s,s}=I$. Then we have
\[
\frac{d}{dt}\left(O^{t,s}\bar E^{t,s}\right)\Big|_{t=s}=\dot{\bar E}^{s,s}+\frac{1}{2}\Omega^{s,s}\bar E^{s,s}
\]
and
\[
\begin{split}
[\R^s_{(\x,\p)}]\bar E^{s,s}&=-\frac{d^2}{dt^2}\left(O^{t,s}\bar E^{t,s}\right)\Big|_{t=s}\\
&=-\frac{d}{dt}\left(\frac{1}{2} O^{t,s}\Omega^{t,s}\bar E^{t,s}+O^{t,s}\dot{\bar E}^{t,s}\right)\Big|_{t=s}\\
&=-\frac{1}{4}(\Omega^{s,s})^2\bar E^{s,s}-\frac{1}{2}\dot\Omega^{s,s}\bar E^{s,s}-\Omega^{s,s}\dot{\bar E}^{s,s}-\ddot{\bar E}^{s,s}.
\end{split}
\]
\end{proof}

\smallskip

\section{Optimal Transportation and Hamilton-Jacobi Equation}\label{OPAHJE}

In this section, we first recall several known facts about solutions of the Hamilton-Jacobi equation and use them to define the smooth displacement interpolations. Then we define a version of Hessian corresponding to a Hamiltonian. We show that the Hessian of a smooth solution to the Hamilton-Jacobi equation satisfies a matrix Riccati equation. To see the importance of this equation, we show that a Bochner type formula for Hamiltonian system holds under extra assumptions.

First, we recall the following result which is an immediate consequence of the method of characteristics. A proof can be found, for instance, in \cite{CaSi,Ev,AgSa}. (Recall that $\Phi_{t,s}$ is the flow of the Hamiltonian vector field $\vec\ham$ and $\Phi_t=\Phi_{t,0}$).

\begin{prop}\label{HJ}
Let $u$ be a smooth solution of the equation
\begin{equation}\label{HJE}
\dot u+\ham(t,x,du_x)=0
\end{equation}
defined on $[0,T]\times M$ with initial condition $u(0,\cdot)=\f(\cdot)$. Then $u$ satisfies the following relation
\begin{equation}\label{HJR}
\Phi_t(x,d\f)=(\varphi_t(x),du(t,\cdot)),
\end{equation}
where $\varphi_t(x)=\pi(\Phi_t(x,d\f))$ and $t$ is in $[0,T]$. Moreover, the path $t\mapsto\varphi_t(x)$ is the unique minimizer of (\ref{prob:cost}) between the endpoints $x$ and $\varphi_T(x)$.
\end{prop}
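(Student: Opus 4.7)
The plan is the classical method of characteristics lifted to the cotangent bundle. Both assertions of the proposition will follow from the single fact that the graph of the spatial differential $du(t,\cdot)$ is an invariant set for the Hamiltonian flow $\Phi_t$.

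\textbf{Step 1 (invariance of the graph of $du$).} In local coordinates I would introduce the auxiliary ODE $\dot{x}_j(t)=\ham_{p_j}(t,x(t),du(t,x(t)))$ on $M$ with $x(0)=x$, and set $p_i(t):=u_{x_i}(t,x(t))$. Differentiating the Hamilton--Jacobi equation (\ref{HJE}) in $x_i$ gives
\[
u_{tx_i}+\ham_{x_i}+\sum_j \ham_{p_j}\,u_{x_jx_i}=0.
\]
Substituting this into the identity $\dot{p}_i=u_{x_it}+\sum_j u_{x_ix_j}\dot{x}_j$, and using the definition of $\dot{x}_j$ together with the symmetry $u_{x_ix_j}=u_{x_jx_i}$, cancels the Hessian terms and leaves $\dot{p}_i=-\ham_{x_i}(t,x,p)$. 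Hence $(x(t),p(t))$ satisfies Hamilton's system (\ref{Ham}) with initial data $(x,d\f_x)$, and by uniqueness for ODEs $\Phi_t(x,d\f)=(x(t),p(t))$, which is precisely (\ref{HJR}).

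\textbf{Step 2 (minimality and uniqueness).} For any $C^1$ curve $\sigma:[0,T]\to M$ with $\sigma(0)=x$ and $\sigma(T)=\varphi_T(x)$, the fundamental theorem of calculus combined with (\ref{HJE}) gives
\[
u(T,\varphi_T(x))-u(0,x)=\int_0^T\bigl[du_{\sigma(t)}(\dot{\sigma}(t))-\ham(t,\sigma(t),du_{\sigma(t)})\bigr]\,dt.
\]
Since $\lag$ is the Legendre transform of $\ham$, the integrand is at most $\lag(t,\sigma,\dot{\sigma})$, with equality precisely when $\dot{\sigma}(t)=\ham_p(t,\sigma(t),du_{\sigma(t)})$. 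Therefore the left-hand side is $\le\int_0^T\lag(t,\sigma,\dot{\sigma})\,dt$ for every admissible $\sigma$. Taking $\sigma=\varphi$ and invoking Step 1 shows that equality is achieved along $\varphi$, so $\varphi$ minimizes (\ref{prob:cost}) between $x$ and $\varphi_T(x)$. Conversely, any minimizer must saturate the Legendre inequality and hence solves the same first-order ODE with the same initial condition as $\varphi$; uniqueness of ODE solutions then forces $\sigma=\varphi$.

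Because $u$ is assumed smooth, there is essentially no obstacle: the only tools used are the chain rule, symmetry of second partials, Legendre duality, and ODE uniqueness. The genuine difficulty --- removing the smoothness hypothesis so as to cover the semi-concave potentials produced by Theorem \ref{OTexist} --- would require running Step 1 $\mu$-almost everywhere and handling the differentiability of $u$ along $\sigma$ with care, but those complications are ruled out by the hypotheses of the proposition.
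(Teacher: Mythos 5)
The paper does not spell out a proof of Proposition~\ref{HJ}; it states that the result is ``an immediate consequence of the method of characteristics'' and refers the reader to \cite{CaSi,Ev,AgSa}. Your argument is exactly that standard method-of-characteristics proof: Step~1 verifies that the graph of $du(t,\cdot)$ is flowed by $\vec\ham$ (differentiate (\ref{HJE}) in $x_i$, use symmetry of $u_{x_ix_j}$ to cancel the Hessian terms, and conclude $\dot p_i=-\ham_{x_i}$, so ODE uniqueness identifies $(x(t),p(t))$ with $\Phi_t(x,d\f)$), and Step~2 is the usual calibration argument via the Legendre inequality, with the strict convexity of $\lag$ in $v$ forcing equality precisely when $\dot\sigma=\ham_p(t,\sigma,du_\sigma)$, whence minimality and uniqueness follow. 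Both steps are correct, and the computation is complete given the smoothness hypothesis on $u$, so your proof matches the approach the paper gestures to via its references.
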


In fact, one can use the relation (\ref{HJR}) to give the following short time existence of solution to (\ref{HJE}) for any given smooth initial data $\f$. A detail discussion can be found, for instance, in \cite{AgSa}.

\begin{prop}\label{HJexist}
Assume that the manifold $M$ is closed. For any smooth initial data $\f$, there is a time $T>0$ such that the equation (\ref{HJE}) admits a smooth solution on $[0,T]\times M$ with initial condition $u(0,\cdot)=\f(\cdot)$.
\end{prop}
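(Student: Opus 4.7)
The plan is to prove short-time existence of a smooth solution to (\ref{HJE}) with initial data $\f$ by running the method of characteristics backwards: use the Hamiltonian flow to construct the solution explicitly, then verify that it is smooth, agrees with $\f$ at time zero, and satisfies the PDE.

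First I would set $\varphi_t(x) := \pi(\Phi_t(x, d\f_x))$ and observe that $\varphi_0 = \mathrm{id}_M$ with $d\varphi_0 = \mathrm{id}_{TM}$, since $\Phi_0$ is the identity on $T^*M$. Smooth dependence of $\Phi_{t,s}$ on initial data and smoothness of $\f$ make $(t,x)\mapsto\varphi_t(x)$ smooth. The inverse function theorem gives local invertibility of $\varphi_t$ for small $t$ around each point, and compactness of $M$ upgrades this to a uniform $T>0$ such that $\varphi_t$ is a global diffeomorphism of $M$ for every $t\in[0,T]$. Write $\Phi_t(x,d\f_x)=(\varphi_t(x),p_t(x))$ with $p_t(x)\in T^*_{\varphi_t(x)}M$.

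Next I would define $u:[0,T]\times M\to\Real$ by
\[
u(t,\varphi_t(x)) \;=\; \f(x) \;+\; \int_0^t \lag\bigl(\tau,\varphi_\tau(x),\dot\varphi_\tau(x)\bigr)\,d\tau.
\]
Since $\varphi_t$ is a diffeomorphism, this uniquely determines a smooth $u$. By construction $u(0,\cdot)=\f$. The content of the remaining verification is the identity $du(t,\cdot)=p_t\circ\varphi_t^{-1}$, which is exactly relation (\ref{HJR}) promised in Proposition \ref{HJ}. Granting this identity, differentiate $u(t,\varphi_t(x))$ in $t$ two ways: directly, this equals $\lag(t,\varphi_t(x),\dot\varphi_t(x))$; via the chain rule, this equals $\dot u(t,\varphi_t(x))+du_{\varphi_t(x)}(\dot\varphi_t(x))=\dot u+p_t(\dot\varphi_t(x))$. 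Using $\dot\varphi_t(x)=\ham_p(t,\varphi_t(x),p_t(x))$ and the Legendre duality $p(v)-\lag(t,x,v)=\ham(t,x,p)$ when $v=\ham_p$, one gets $\dot u(t,\varphi_t(x))=-\ham(t,\varphi_t(x),p_t(x))=-\ham(t,\varphi_t(x),du_{\varphi_t(x)})$, and composing with $\varphi_t^{-1}$ yields (\ref{HJE}).

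The only real work is the identity $du(t,\cdot)=p_t\circ\varphi_t^{-1}$, and this is the step I expect to be the main obstacle. The standard route is the first variation formula: pick a smooth one-parameter family $x^s$ with $x^0=x$ and $\partial_s|_{s=0}x^s=\xi$, let $\gamma^s(\tau)=\varphi_\tau(x^s)$, and differentiate the defining identity for $u$ in $s$ at $s=0$. Because each $\gamma^s$ is a solution of the Euler-Lagrange equation, the interior part of the variation of the action vanishes and only the boundary contribution survives, giving
\[
du_{\varphi_t(x)}(d\varphi_t(\xi)) \;=\; p_t(x)\bigl(d\varphi_t(\xi)\bigr) \;-\; d\f_x(\xi) \;+\; d\f_x(\xi) \;=\; p_t(x)\bigl(d\varphi_t(\xi)\bigr),
\]
where the $-d\f_x(\xi)$ comes from the boundary term at $\tau=0$ (using $p_0=d\f_x$) and the $+d\f_x(\xi)$ from differentiating the leading $\f(x^s)$ term. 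Since $d\varphi_t$ is an isomorphism and $\xi$ is arbitrary, this yields the claimed identity, and the proof is complete. The argument is essentially that of \cite{CaSi,Ev,AgSa}, to which one may refer for the full details.
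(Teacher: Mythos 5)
Your proof is correct, and it is precisely the method-of-characteristics argument that the paper invokes (by appeal to the relation (\ref{HJR}) and the reference \cite{AgSa}) rather than writes out: flow the initial covector $d\f$ by the Hamiltonian flow, show $\varphi_t$ is a diffeomorphism for small $t$ by compactness, define $u$ by integrating the Lagrangian along characteristics, recover $du = p_t\circ\varphi_t^{-1}$ from the first-variation boundary term, and derive (\ref{HJE}) from Legendre duality. Nothing is missing.
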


We define smooth displacement interpolations of Problem \ref{prob:OT} as any one parameter family of measures defined by (\ref{disint})
where $\f$ is any smooth function such that (\ref{HJE}) admits a smooth solution on $[0,T]\times M$ with initial condition $u(0,\cdot)=\f(\cdot)$. The terminology is justified by the following proposition. The proof is a simple application of Proposition \ref{HJ} and the Kantorovich duality (see \cite{Vi1,Vi2}). The proof can be found in \cite{Le1}.

\begin{prop}
Let $\mu_t$ be a smooth displacement interpolation corresponding to the cost (\ref{prob:cost}) defined by the function $\f$. Then the map
\[
\varphi_T(x):=\pi(\Phi_T(d\f_x))
\]
is a solution of Problem \ref{prob:OT} between any measure $\mu$ and $(\varphi_T)_*\mu$.
\end{prop}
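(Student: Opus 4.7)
The plan is to apply Kantorovich duality, using the smooth solution of the Hamilton–Jacobi equation as a pair of dual potentials. By Proposition \ref{HJexist}, there is a smooth solution $u$ of (\ref{HJE}) on $[0,T]\times M$ with $u(0,\cdot)=\f$. By Proposition \ref{HJ}, this solution satisfies $\Phi_t(x,d\f)=(\varphi_t(x),du(t,\cdot))$, and the curve $t\mapsto\varphi_t(x)$ is the unique minimizer of the action functional joining $x$ to $\varphi_T(x)$. In particular, $c_T(x,\varphi_T(x))=\int_0^T\lag(t,\varphi_t(x),\dot\varphi_t(x))\,dt$.

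The next step is to show the duality pairing $u(T,y)-\f(x)\le c_T(x,y)$ for all $x,y\in M$, with equality along the graph of $\varphi_T$. For an arbitrary $C^1$ curve $\gamma:[0,T]\to M$ with $\gamma(0)=x$ and $\gamma(T)=y$, I would compute
\[
u(T,y)-\f(x)=\int_0^T\ddt u(t,\gamma(t))\,dt=\int_0^T\bigl[du(t,\gamma(t))\cdot\dot\gamma(t)-\ham(t,\gamma(t),du)\bigr]\,dt,
\]
using that $u$ solves (\ref{HJE}). Because $\ham$ is the Legendre transform of $\lag$, we have $p(v)-\ham(t,x,p)\le\lag(t,x,v)$ for all $v,p$, so the integrand is bounded above by $\lag(t,\gamma(t),\dot\gamma(t))$; taking the infimum over $\gamma$ yields $u(T,y)-\f(x)\le c_T(x,y)$. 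When $y=\varphi_T(x)$ and $\gamma=\varphi_{\cdot}(x)$, the Hamilton–Jacobi relation $du(t,\varphi_t)=$ the momentum along the characteristic, together with the Legendre equality $p\cdot v-\ham=\lag$ at the minimizer, gives equality throughout.

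Finally, I would conclude by a standard Kantorovich argument. Set $\nu:=(\varphi_T)_*\mu$. For any Borel map $\psi$ with $\psi_*\mu=\nu$, the pointwise inequality above integrates to
\[
\int_M c_T(x,\psi(x))\,d\mu(x)\ge\int_M\bigl[u(T,\psi(x))-\f(x)\bigr]\,d\mu(x)=\int_M u(T,y)\,d\nu(y)-\int_M\f(x)\,d\mu(x),
\]
where we used $\psi_*\mu=\nu$ in the last equality. The right-hand side is independent of $\psi$, while the choice $\psi=\varphi_T$ gives equality throughout by the pointwise equality on the graph of $\varphi_T$. Hence $\varphi_T$ attains the infimum of Problem \ref{prob:OT} between $\mu$ and $\nu$.

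There is no serious obstacle here: once the smooth Hamilton–Jacobi solution is in hand (Propositions \ref{HJ} and \ref{HJexist}), the argument is routine. The only mild point is ensuring integrability of the two potentials $\f$ and $u(T,\cdot)$, which is automatic because $M$ is compact and both functions are smooth; no absolute-continuity hypothesis on $\mu$ is needed, so the statement holds for every Borel probability measure $\mu$.
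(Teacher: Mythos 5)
Your proof is correct and follows exactly the strategy the paper itself indicates (Proposition~\ref{HJ} together with Kantorovich duality, with the detailed argument deferred to~\cite{Le1}): the pair $(-\f, u(T,\cdot))$ serves as Kantorovich potentials, the Fenchel--Young inequality gives $u(T,y)-\f(x)\le c_T(x,y)$ with equality along characteristics, and compactness of $M$ handles integrability. The observation that absolute continuity of $\mu$ is not needed for optimality (only for uniqueness) is also accurate.
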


Next, we introduce Hessian of the function $\f$ with respect to the Hamiltonian system $\vec\ham$. Let us consider the map $x\mapsto d\f_x$. The differential of this map sends the tangent space $T_xM$ to a Lagrangian subspace of the symplectic vector space $T_{(x,d\f)}T^*M$. This is the graph of a linear map from the horizontal space
\[
\hor^t_{(x,d\f)}=\textbf{span}\{f_1^{t,t},...,f_n^{t,t}\}
\]
to the vertical space $\ver_{(x,d\f)}$. The negative of this linear map is the Hessian of $\f$ with respect to the Hamiltonian system $\vec\ham$ at $(t,x)$ and it is denoted by  $\hess_\ham\f(t,x)$. The trace with respect to the canonical frame $E^{t,t}=(e^{t,t}_1,...,e^{t,t}_n)^T$ and $F^{t,t}=(f^{t,t}_1,...,f^{t,t}_n)^T$ at $(x,d\f)$ is denoted by $\Delta_\ham\f(t,x)$.

Let $E^{t,s}=(e^{t,s}_1,...,e^{t,s}_n)^T$ and $F^{t,s}=(f^{t,s}_1,...,f^{t,s}_n)^T$ be a canonical frame of the Jacobi curve $J^{t,s}_{(\x,d\f)}=d\Phi_{t,s}^{-1}(\ver_{\Phi_{t,s}(\x,d\f)})$. Note that the following relation holds
\[
d\Phi_\tau(J^{t,0}_{(\x,d\f)})=J^{t,\tau}_{\Phi_\tau(\x,d\f)}=J^{t,\tau}_{(\varphi_\tau(\x),du(t,\cdot))}.
\]
It follows that $\tilde E^{t,\tau}=(\tilde e^{t,\tau}_1,...,\tilde e^{t,\tau}_n)^T$ and $\tilde F^{t,\tau}=(\tilde f^{t,\tau}_1,...,\tilde f^{t,\tau}_n)^T$ defined by
\[
\begin{split}
&\tilde E^{t,\tau}:=d\Phi_\tau(E^{t,0})=(d\Phi_\tau(e_1^{t,0}),...,d\Phi_\tau(e_1^{t,0}))^T\\
&\tilde F^{t,\tau}:=d\Phi_\tau(F^{t,0})=(d\Phi_\tau(f_1^{t,0}),...,d\Phi_\tau(f_1^{t,0}))^T
\end{split}
\]
is a canonical frame of the Jacobi curve $J^{t,\tau}_{\Phi_\tau(\x,d\f)}$.

Let $A(t)$ and $B(t)$ be the matrices defined by
\begin{equation}\label{AB}
d(d\f)_{\x}(d\pi(f^{0,0}_i))=\sum_{j=1}^n\left(A_{ij}(t)e^{t,0}_j+B_{ij}(t)f^{t,0}_j\right).
\end{equation}
where $d(d\f)_\x$ is the differential of the map $x\mapsto (d\f)_x$ at $x=\x$.

By applying $d\Phi_t$ on both sides of (\ref{AB}), we obtain the following relation
\begin{equation}\label{AB1}
d\Phi_t(d(d\f)_{\x}(d\pi(f^{0,0}_i)))=\sum_{j=1}^n\left(A_{ij}(t)\tilde e^{t,t}_j+B_{ij}(t)\tilde f^{t,t}_j\right).
\end{equation}

It follows from this equation and (\ref{HJR}) that
\[
d(du(t,\cdot))_{\varphi_t(\x)}(d\varphi_t(d\pi(f^{0,0}_i)))=\sum_{j=1}^n\left(A_{ij}(t)\tilde e^{t,t}_j+B_{ij}(t)\tilde f^{t,t}_j\right).
\]

If we apply (\ref{AB1}) to this equation, then we obtain
\[
\sum_{j=1}^nB_{ij}(t)d(du(t,\cdot))_{\varphi_t(\x)}(d\pi(\tilde f_j^{t,t})) =\sum_{j=1}^n\left(A_{ij}(t)\tilde e^{t,t}_j+B_{ij}(t)\tilde f^{t,t}_j\right).
\]

Therefore, the matrix representation $S(t)$ of the Hessian $\hess_\ham u(t,\varphi_t(\x))$ (as a map from the horizontal space $\hor^t_{(\varphi_t(\x),du(t,\cdot))}$ to the vertical space $\ver_{(\varphi_t(\x),du(t,\cdot))}$) with respect to the basis $\tilde E^{t,t}$ and $\tilde F^{t,t}$ is given by
\begin{equation}\label{S}
S(t)=-B(t)^{-1}A(t).
\end{equation}

By differentiating (\ref{AB}) and using the relation (\ref{curvconn}), we also obtain
\[
\dot B(t)=-A(t),\quad \dot A(t)=B(t)[\R^t_{\Phi_t(\x,d\f)}],
\]
where $[\R^t_{\Phi_t(\x,d\f)}]$ denotes the matrix representation of $\R^t_{\Phi_t(\x,d\f)}$ with respect to the basis $E^{t,t}$.

It follows from these equations and (\ref{S}) that the following matrix Riccati equation holds
\begin{equation}\label{Riccati}
\dot S(t)+S(t)^2+[\R^t_{\Phi_t(\x,d\f)}]=0.
\end{equation}

The equation (\ref{Riccati}) can be seen as a Hamiltonian analogue of Bochner formula in Riemannian geometry. For this, we use an argument in \cite{Vi2}. Indeed, assume that the map $f\mapsto \Delta_\ham f$ is linear. Then we can take the trace of the Riccati equation (\ref{Riccati}) and obtain the following
\[
\begin{split}
&-\Delta_\ham\ham(t,\varphi_t(x),du)+d(\Delta_\ham u)(\ham_p(\varphi_t(x)))\\
&=\Delta_\ham\dot u(t,\varphi_t(x))+d(\Delta_\ham u)(\dot\varphi_t(x))\\
&=\frac{d}{dt} (\Delta_\ham u(t,\varphi_t(x)))\\
&=-|\hess_\ham u(t,\varphi_t(x))|^2-\tr(\R^t_{\Phi_t(\x,d\f)}).
\end{split}
\]
Here $|\hess_\ham u(t,\varphi_t(x))|$ denotes the Hilbert-Schmidt norm of the linear operator $\hess_\ham u_t:\hor^t_{(\varphi_t(\x),du(t,\cdot))}\to\ver_{(\varphi_t(\x),du(t,\cdot))}$ with respect to the Euclidean structure defined by a canonical frame $E^{t,t}$ on the vertical space $\ver_{(\varphi_t(\x),du(t,\cdot))}$ and $F^{t,t}$ on the horizontal space $\hor^t_{(\varphi_t(\x),du(t,\cdot))}$.

By setting $t=0$, we obtain the following Bochner type formula
\[
\Delta_\ham\ham(t,x,d\f)-d(\Delta_\ham \f)(\ham_p(x,d\f))=|\hess_\ham \f(x)|^2+\tr(\R^0_{(\x,d\f)}).
\]

\begin{rem}
One can use the matrix Riccati equation (\ref{Riccati}) together with comparison results in (\cite{Ro}) to prove analogues of Laplacian, Hessian, and volume comparison theorems for Hamiltonian systems.
\end{rem}

\begin{rem}
By combining the work in \cite{AgLe} with the above argument, one can recover the subriemannian Bochner formula in \cite{BaGa} (at least in the three dimensional case) in a geometric way.
\end{rem}

\smallskip

\section{Natural Mechanical Hamiltonians}\label{NMH}

In this section, we discuss how to compute the Hessian and the curvature of the natural mechanical Hamiltonians. Let $g$ be a Riemannian metric, let $g_{ij}=g(\partial_{x_i},\partial_{x_j})$, and let $g^{ij}$ be the inverse matrix of $g_{ij}$. Let $U$ be a smooth function on the manifold $M$. The Hamiltonian given in local coordinates by
\begin{equation}\label{mechanical}
\ham(x,p)=\frac{1}{2}\sum_{i,j}g^{ij}(x)p_ip_j+U(t,x)
\end{equation}
is called a natural mechanical Hamiltonian. When $U\equiv 0$, the Hamiltonian flow of $\ham$ coincides with the geodesic flow of $g$. The proof of the following illustrate how Proposition \ref{Rformula} can be applied and it is needed in the later sections. For an alternative proof, see \cite{Ag}.

\begin{prop}\label{mechanicalcurv}
The Hessian $\hess_\ham \f$ and the curvature $\R$ of the Hamiltonian (\ref{mechanical}) are given by
\[
\hess_\ham \f(t,x)=\hess \f(x)
\]
and
\[
\R^t_{(\x,\p)}(V)=Rm_\x(\p,V)\p+\hess\,U_\x(V),
\]
respectively. Here $Rm$ and $\hess$ denote the Riemannian curvature tensor and the Hessian, respectively, of the given Riemannian metric $g$.
\end{prop}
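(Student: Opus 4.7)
The plan is to apply Proposition \ref{Rformula} with a conveniently chosen time-independent orthonormal vertical frame. Pick a local orthonormal frame $\{X_1,\dots,X_n\}$ of $TM$ near $\x$ with dual coframe $\{\theta^1,\dots,\theta^n\}$, and set
\[
V_i := \sum_{k=1}^n (\theta^i)_k\,\partial_{p_k}.
\]
Since $\ham_{p_ip_j} = g^{ij}$, orthonormality of $\{\theta^i\}$ as a $g^{-1}$-coframe is exactly orthonormality of $\{V_i\}$ with respect to the vertical Riemannian metric induced by $\ham$. Because $V_i$ depends only on $x$ and not on $t$, we have $\dot V_i = \ddot V_i = 0$, so the formulas in Proposition \ref{Rformula} reduce to evaluating the iterated brackets $[\vec\ham, V_i]$, $[\vec\ham,[\vec\ham, V_i]]$ and $[\vec{\dot\ham}, V_i]$ together with the symplectic correction matrix $\Omega^{t,t}$.

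Next, I would split $\ham = \ham_0 + U$ with $\ham_0 := \tfrac{1}{2}\sum g^{ij}p_ip_j$, so that $\vec\ham = \vec\ham_0 + \vec U$ with $\vec U = -\sum U_{x_j}\partial_{p_j}$. Both $\vec U$ and $V_i$ are vertical and their coefficients depend only on $x$, so $[\vec U, V_i] = 0$; the same applies to $\vec{\dot\ham} = \vec{\partial_t U}$. Hence
\[
[\vec\ham, V_i] = [\vec\ham_0, V_i], \qquad [\vec{\dot\ham}, V_i] = 0,
\]
and
\[
[\vec\ham, [\vec\ham, V_i]] = [\vec\ham_0, [\vec\ham_0, V_i]] + [\vec U, [\vec\ham_0, V_i]].
\]
A direct coordinate computation yields $d\pi([\vec\ham_0, V_i])_{(\x,\p)} = -X_i$, so the horizontal part of $[\vec\ham_0, V_i]$ projects to the Riemannian orthonormal frame at $\x$. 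The cross-term $[\vec U, [\vec\ham_0, V_i]]$ then contributes exactly $\hess U_\x(X_i)$ at the base point, since the vertical part of $[\vec\ham_0, V_i]$ again commutes with $\vec U$.

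The core step is identifying $[\vec\ham_0, [\vec\ham_0, V_i]]$, modulo the $\tfrac{1}{4}\Omega^2$, $\tfrac{1}{2}\dot\Omega$ and $\Omega\dot{\bar E}$ corrections of Proposition \ref{Rformula}, with $Rm_\x(\p, X_i)\p$. I would work in Riemannian normal coordinates centered at $\x$ chosen so that $X_i = \partial_{x_i}$ at $\x$; then $g^{ij}(\x) = \delta^{ij}$ and $g^{ij}_{x_k}(\x) = 0$, which kills all first-derivative terms in the iterated bracket. The surviving contributions involve only $g^{ij}_{x_kx_l}(\x)$ and $\p$, and the standard normal-coordinate identity expressing second derivatives of the metric in terms of the Riemann tensor at $\x$ combines them into $Rm_\x(\p, X_i)\p$. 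The same frame setup gives the Hessian identity: the canonical horizontal frame $F^{t,t}$ at $(\x, d\f)$ projects to $X_i = \partial_{x_i}$, so computing the matrix $S = -B^{-1}A$ from (\ref{S}) starting from $d(d\f)_\x(\partial_{x_i}) = \partial_{x_i} + \f_{x_ix_j}\partial_{p_j}$ yields the coordinate Hessian of $\f$, which in normal coordinates at $\x$ coincides with $\hess \f(\x)$.

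The main obstacle will be the algebraic bookkeeping in the preceding step: expanding $[\vec\ham_0,[\vec\ham_0, V_i]]$ in coordinates produces many terms mixing powers of $\p$, derivatives of $g^{ij}$, and derivatives of $\theta^i$, and one must verify that the $\Omega$-corrections cancel precisely the pieces that are not part of $Rm(\p,\cdot)\p$. Working in normal coordinates with $X_i = \partial_{x_i}$ at $\x$ reduces this to a finite symbol manipulation, but care is required to keep the symmetric/antisymmetric pieces matching the canonical-frame conditions $\omega(\ddot e_i, e_j)=0$ underlying Proposition \ref{canonical}, so that the identification with the Riemann tensor emerges unambiguously.
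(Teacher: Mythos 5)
Your proposal follows essentially the same route as the paper: apply Proposition \ref{Rformula} with a time-independent orthonormal vertical frame and compute in geodesic normal coordinates at $\x$, using the normal-coordinate identity for the Riemann tensor at the end. The splitting $\ham=\ham_0+U$ and the observation $[\vec U, V_i]=0$ are a modest organizational tidying of what the paper does inline, and your handling of the Hessian and of the cross-term contributing $\hess U$ is correct. One thing to be careful about: the paper's choice $V_m=\sum_i\sqrt{g}_{mi}\partial_{p_i}$ is engineered so that at $\x$ in normal coordinates one has both $V_m=\partial_{p_m}$ and $(\sqrt{g}_{mi})_{x_k}=0$, which immediately gives $\Omega^{t,t}=0$ and simplifies $\dot\Omega^{t,t}$. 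With an arbitrary smooth extension of your orthonormal coframe $\theta^i$ away from $\x$, the first derivatives of $(\theta^i)_k$ at $\x$ need not vanish, and the $\Omega$-correction terms in Proposition \ref{Rformula} will carry extra pieces; you should either take the frame parallel along radial geodesics (so that $\partial_{x_j}(\theta^i)_k(\x)=0$) or adopt the $\sqrt{g}$-frame, otherwise the "finite symbol manipulation" becomes noticeably heavier than necessary, though the final answer is of course unchanged by Proposition \ref{canonical}.
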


\begin{rem}
In Proposition \ref{mechanicalcurv}, we have made several identifications $T_\x M\cong T^*_\x M\cong \ver_{(\x,\p)}$. The first identification is done by the Riemannian metric $v\mapsto g(v,\cdot)$ and the second one is by $l\mapsto\frac{d}{dt}\left(\p+tl\right)\Big|_{t=0}$. We will make these identifications for the rest of this paper without mentioning.
\end{rem}

\begin{proof}
Let us fix a geodesic normal coordinates $(x_1,...,x_n)$ around a point $\x$ and let $(x_1,...,x_n,p_1,...,p_n)$ be the corresponding coordinates on the cotangent bundle around the point $(\x,d\f)$. The Hamiltonian vector field $\vec\ham$ is given by
\[
\vec\ham=\sum_{i,j}g^{ij}p_j\partial_{x_i}-\sum_k\Big(\frac{1}{2}\sum_{i,j}g^{ij}_{x_k}p_ip_j+U_{x_k}\Big)\partial_{p_k}.
\]

Let $V_m^t=V_m=\sum_{i}\sqrt{g}_{mi}\partial_{p_i}$ which is independent of time $t$. First, we compute the bracket of $\vec\ham$ and $V_m$
\[
[\vec\ham,V_m]=\sum_{i,j,l}g^{ij}p_j(\sqrt{g}_{ml})_{x_i}\partial_{p_l} -\sum_i\sqrt g^{mi}\partial_{x_i}+\sum_{i,l,k}\sqrt{g}_{ml}\Big(g^{il}_{x_k}p_i\Big)\partial_{p_k}.
\]

Using the properties of geodesic normal coordinates, we have $g^{ij}=\delta^{ij}$ and $(\sqrt g_{ij})_{x_k}=0$ at $\x$. Therefore, the followings hold at $(\x,d\f)$
\[
[\vec\ham,V_m] = -\partial_{x_m},\quad \Omega_{ij}^{0,0}=\omega([\vec\ham,V_i],[\vec\ham,V_j])=0.
\]

It follows that the horizontal space at $(\x,d\f)$ is spanned by $\partial_{x_1},...,\partial_{x_n}$. Since
\[
d(d\f)_\x(-\partial_{x_i})=-\sum_j\f_{x_ix_j}\partial_{p_j}-\partial_{x_i},
\]
$\hess_\ham \f$ is given by $\f_{x_ix_j}=\hess\f$ at $(\x,d\f)$.

Since $(\sqrt g_{lk})_{x_ix_r}=\frac{1}{2}(g_{lk})_{x_ix_r}$ at $\x$, a computation shows that the following holds at $\x$
\[
\begin{split}
&[\vec\ham,[\vec\ham,V_m]]\\
& =\frac{1}{2}\sum_{i,j,k}p_ip_j\Big((g_{mk})_{x_ix_j} +(g_{ij})_{x_kx_m}-2(g_{im})_{x_kx_j}\Big)\partial_{p_k} -\sum_k U_{x_kx_m}\partial_{p_k}.
\end{split}
\]
and
\[
\begin{split}
&\omega([\vec\ham,[\vec\ham,V_m]],[\vec\ham,V_l])\\
&=-\frac{1}{2}\sum_{i,j}p_ip_j\Big((g_{ml})_{x_ix_j} +(g_{ij})_{x_lx_m}-2(g_{im})_{x_lx_j}\Big)+U_{x_lx_m}.
\end{split}
\]
It follows from Proposition \ref{Rformula} that
\[
\dot\Omega_{ml}^{0,0}=\sum_{i,j}p_ip_j\Big((g_{im})_{x_lx_j}-(g_{il})_{x_mx_j}\Big)
\]
at $\x$ (note that since $\vec{\dot H}=\vec{\dot U}$ is contained in $\ver$, $[\vec{\dot H},V_i]=0$).

Therefore,
\[
\begin{split}
&[\vec\ham,[\vec\ham,V_m]]+\Omega_{ml}^{0,0}[\vec\ham,V_l]+\frac{1}{2}\dot\Omega_{ml}^{0,0}V_l+\frac{1}{4}\Omega_{mn}^{0,0}\Omega_{nl}^{0,0}V_l\\
&\quad =\frac{1}{2}\sum_{i,j,k}p_ip_j\Big((g_{mk})_{x_ix_j} +(g_{ij})_{x_kx_m} \\
&\quad \quad -(g_{im})_{x_kx_j}-(g_{ik})_{x_mx_j}\Big)\partial_{p_k} -\sum_k U_{x_kx_m}\partial_{p_k}.
\end{split}
\]

Recall that the Riemann curvature tensor $Rm$ satisfies the following equation at $\x$
\[
R_{ijk}^l=\frac{1}{2}\left((g_{ik})_{x_jx_l}-(g_{kj})_{x_ix_l}-(g_{il})_{x_jx_k}+(g_{lj})_{x_ix_k}\right),
\]
where $R_{ijk}^l$ is defined by $Rm(\partial_{x_i},\partial_{x_j})\partial_{x_k}=\sum_{l}R_{ijk}^l\partial_{x_l}$.

Therefore, by combining all of the above computations with Proposition \ref{Rformula}, we obtain the following equation at $\x$
\[
\R^t_{(\x,\p)}(V)=Rm_\x(\p,V)\p+\hess\,U_\x(V).
\]
\end{proof}

\smallskip

\section{Natural Mechanical Hamiltonians with Evolving Riemannian Metrics}\label{NMHWERM}

In this section, we consider the case when both the Riemannian metric $g$ and the potential $U$ depend on time. Let
\begin{equation}\label{timeHam}
\ham(t,x,p)=\frac{1}{2}\sum_{i,j}g^{ij}(t,x)p_ip_j+U(t,x)
\end{equation}
be the Hamiltonian with Hamiltonian vector field
\[
\vec\ham=\sum_{i,j}g^{ij}p_j\partial_{x_i}-\sum_k\left(\frac{1}{2}\sum_{i,j}g^{ij}_{x_k}p_ip_j+U_{x_k}\right)\partial_{p_k}.
\]

\begin{thm}\label{curvtime}
The Hessian $\hess_\ham \f$ and the curvature $\R$ of the Hamiltonian system $\vec\ham$ defined by (\ref{timeHam}) satisfy
\[
\hess_\ham \f(t,\x)=\hess\f(t,\x)+\frac{1}{2}\dot g(t,\x)
\]
and
\[
\begin{split}
\tr\,(\R_{(\x,\p)}^t) &=\ric(t,\x)(\p,\p)+\Delta U(t,\x)-\left<\nabla(\tr\,(\dot g(t,\x))),\p\right>\\
&+\diver(\dot g(t,\x))(\p)-\frac{1}{2}\tr\,(\ddot g(t,\x))+\frac{1}{4}|\dot g(t,\x)|^2
\end{split}
\]
where $\ric$, $\nabla$, and $\Delta$ are the Ricci curvature, the gradient, and the Laplacian, respectively,  of the Riemannian metric $g(t,\cdot)$ at time $t$.
\end{thm}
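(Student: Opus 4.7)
The plan is to apply Proposition \ref{Rformula} in geodesic normal coordinates at $\x$ for the metric $g(t,\cdot)$ (with $t$ fixed), using the natural orthonormal vertical frame $V_m^t = \sum_i (\sqrt g)_{mi}(t,\cdot)\,\partial_{p_i}$. This extends the proof of Proposition \ref{mechanicalcurv}, the new features being the time dependence of $V_m^t$, of $\ham$ itself, and of the normal coordinates. A preliminary computation, obtained by differentiating $\sqrt g \cdot \sqrt g = g$ at $\x$ where $\sqrt g = I$, gives $\dot{\sqrt g}|_{\x} = \tfrac{1}{2}\dot g$ and $\ddot{\sqrt g}|_{\x} = \tfrac{1}{2}\ddot g - \tfrac{1}{4}(\dot g)^2$.

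For the Hessian, I compute $\dot{\bar e}_m^{t,t}|_{(\x,\p)} = [\vec\ham, V_m^t] + \dot V_m^t = -\partial_{x_m} + \tfrac{1}{2}\sum_i \dot g_{mi}(t,\x)\,\partial_{p_i}$; the horizontal part is as in Proposition \ref{mechanicalcurv} and the vertical part is new. Since $\Omega^{t,t}|_{(\x,\p)} = 0$ (shown below), this vector also equals $f_m^{t,t}|_{(\x,\p)}$. Applying $d(d\f)_{\x}$ to $d\pi(f_m^{t,t}) = -\partial_{x_m}$ yields $-\partial_{x_m} - \sum_j \f_{x_mx_j}(\x)\,\partial_{p_j}$, and matching coefficients against $(e^{t,t}, f^{t,t}) = (V^t, \dot{\bar e}^{t,t})$ gives $B = I$ and $A_{mj} = -\f_{x_mx_j}(\x) - \tfrac{1}{2}\dot g_{mj}(t,\x)$. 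Hence $\hess_\ham \f(t,\x) = \hess\f(t,\x) + \tfrac{1}{2}\dot g(t,\x)$ in normal coordinates.

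For the curvature, the key simplification is $\Omega^{t,t}_{mn}|_{(\x,\p)} = \omega(\dot{\bar e}_m^{t,t}, \dot{\bar e}_n^{t,t}) = \tfrac{1}{2}(\dot g_{nm} - \dot g_{mn}) = 0$ by symmetry of $\dot g$. Proposition \ref{Rformula} then reduces to $[\R^t]\bar E^{t,t} = -\tfrac{1}{2}\dot\Omega^{t,t}\bar E^{t,t} - \ddot{\bar E}^{t,t}$. Since $\Omega$ is antisymmetric, so is $\dot\Omega$, and its trace vanishes; therefore $\tr[\R^t_{(\x,\p)}]$ equals the negative of the sum over $m$ of the $\partial_{p_m}$-components of $\ddot{\bar e}_m^{t,t}$. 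I expand $\ddot{\bar e}_m^{t,t} = [\vec\ham,[\vec\ham, V_m^t]] + 2[\vec\ham, \dot V_m^t] + [\vec{\dot\ham}, V_m^t] + \ddot V_m^t$ at $(\x,\p)$ and handle the four summands. The first is purely vertical by Proposition \ref{mechanicalcurv} and contributes $\ric(\p,\p) + \Delta U$ after the outer sign. The middle two summands have horizontal parts $\mp\sum_i \dot g_{im}\partial_{x_i}$ that cancel, and their combined vertical parts amount to $\sum_{i,k}p_i(\dot g_{mk,x_i} - \dot g_{im,x_k})\,\partial_{p_k}$; tracing over $m$ and negating produces $\diver(\dot g)(\p) - \langle\nabla(\tr \dot g),\p\rangle$. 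The last summand contributes $-\tr(\ddot{\sqrt g}) = -\tfrac{1}{2}\tr(\ddot g) + \tfrac{1}{4}|\dot g|^2$ by the preliminary identity. Summing the four contributions yields exactly the formula in the theorem.

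The main obstacle is bookkeeping rather than any conceptual difficulty. Computing $[\vec{\dot\ham}, V_m^t]$ requires the normal-coordinate identities $\dot g^{ij}|_{\x} = -\dot g_{ij}|_{\x}$ and $\dot g^{ij}_{x_k}|_{\x} = -\dot g_{ij,x_k}|_{\x}$ (obtained by differentiating $g^{ij}g_{jk} = \delta^i_k$). The non-trivial cancellation of the horizontal parts of $2[\vec\ham, \dot V_m^t]$ and $[\vec{\dot\ham}, V_m^t]$ is the most delicate piece, but it is forced: because $\Omega^{t,t} = 0$ at $(\x,\p)$, the canonical frame coincides with the orthonormal frame there, so $\ddot{\bar e}_m^{t,t}$ must be vertical by Proposition \ref{canonical}. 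This serves as a useful internal consistency check.
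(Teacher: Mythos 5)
Your proposal is correct and follows essentially the same approach as the paper: apply Proposition \ref{Rformula} in geodesic normal coordinates at $\x$ with the orthonormal vertical frame $V_m^t=\sum_i(\sqrt g)_{mi}\partial_{p_i}$, exploit $\Omega^{t,t}=0$, and assemble the trace from the four pieces of $\ddot{\bar e}_m^{t,t}$. The one deviation --- noting that $\dot\Omega^{t,t}$ is antisymmetric so its trace drops out, thereby bypassing the paper's explicit computation of $\dot\Omega$, together with your consistency check that verticality of $\ddot{\bar e}_m^{t,t}$ forces the horizontal parts of $2[\vec\ham,\dot V_m^t]$ and $[\vec{\dot\ham},V_m^t]$ to cancel --- is a legitimate and slightly cleaner shortcut, not a different route.
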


\smallskip

\begin{proof}[Proof of Theorem \ref{curvtime}]
We apply Proposition \ref{Rformula} to the time dependent Hamiltonian (\ref{timeHam}). Let us fix a geodesic normal coordinate around $\x$ with respect to the Riemannian metric $g$ at time $t$. A computation shows that
\[
[\vec \ham,\sum_i a_i^t(x)\partial_{p_i}]=\sum_{i,j,l}g^{lj}p_j(a_i^t)_{x_l}\partial_{p_i}-\sum_{j,l}g^{lj}a_j^t\partial_{x_l}+\sum_{j,k,l}g^{lj}_{x_k}p_ja_l^t\partial_{p_k}.
\]
Therefore, the following equation holds at $(\x,\p)$
\begin{equation}\label{brack}
[\vec \ham,\sum_i a_i^t(x)\partial_{p_i}]=\sum_{i,l}p_l(a_i^t)_{x_l}\partial_{p_i}-\sum_l a_l^t\partial_{x_l}.
\end{equation}

Let $V_i^t=\sum_j\sqrt{g}_{ij}\partial_{p_j}$ and let $\bar e^{t,s}_i=(\Phi_{t,s})^*V_i^t$. Then clearly $\bar e^{t,t}_i=\partial_{p_i}$ at $(\x,\p)$. Since $\dot{\sqrt g}_{ij}=\frac{1}{2}\dot g_{ij}$ at $\x$, it follows from (\ref{brack}) that
\[
\bar f_i^{t,t}:=\dot{\bar e}^{t,t}_i=[\vec\ham,V_i^t]+\dot V_i^t=-\partial_{x_i}+\sum_j\frac{1}{2}\dot g_{ij}\partial_{p_j}
\]
at $(\x,\p)$.

It follows that
\[
\begin{split}
d(d\f)_\x(-\partial_{x_i})&=-\sum_j f_{x_ix_j}\partial_{p_j}-\partial_{x_i}\\
&=-\sum_j\left(f_{x_ix_j}+\frac{1}{2}\dot g_{ij}\right)\partial_{p_j}-\partial_{x_i}+\frac{1}{2}\sum_j\dot g_{ij}\partial_{p_j}
\end{split}
\]
and so
\[
\hess_\ham\f=\hess\f+\frac{1}{2}\dot g.
\]

Since $\dot g_{ij}$ is symmetric, it follows that
\[
\Omega_{ij}^{t,t}=\omega(\bar f_i^{t,t},\bar f_j^{t,t})=0.
\]

Clearly, we have
\[
\dot V_i^t=\sum_k\dot{\sqrt g}_{ik}\partial_{p_k}=\frac{1}{2}\sum_k\dot g_{ik}\partial_{p_k}
\]
at $\x$. It follows from (\ref{brack}) that
\[
[\vec\ham,\dot V_i^t]=\frac{1}{2}\sum_{l,m}p_l(\dot g_{im})_{x_l}\partial_{p_m}-\sum_m\frac{1}{2}\dot g_{im}\partial_{x_m}
\]
at $\x$.

By a computation together with the facts that $\dot g_{ij}=-\dot g^{ij}$ and $(\dot g_{ij})_{x_k}=-(\dot g^{ij})_{x_k}$, we see that
\[
[\vec{\dot \ham}_t,V_i^t]=\sum_l\dot g_{il}\partial_{x_l}-\sum_{l,k}(\dot g_{il})_{x_k}p_l\partial_{p_k}
\]
at $\x$.

Since $\ddot{\sqrt g}_{ik}=\frac{1}{2}\ddot g_{ik}-\sum_j\frac{1}{4}\dot g_{ij}\dot g_{jk}$, we have
\[
\ddot V_i^t=\sum_k\ddot{\sqrt g}_{ik}\partial_{p_k}=\sum_k\left(\frac{1}{2}\ddot g_{ik}-\frac{1}{4}\sum_j\dot g_{ij}\dot g_{jk}\right)\partial_{p_k}
\]
at $\x$.

By combining all of the above computations, we obtain
\[
\begin{split}
\dot{\bar f}^{t,t}_i&=[\vec\ham,[\vec\ham,V_i^t]]+2[\vec\ham,\dot V_i^t]+\ddot V_i^t+[\vec{\dot\ham}_t,V_i^t]\\ &=[\vec\ham,[\vec\ham,V_i^t]]+\sum_k\Big(\sum_lp_l(\dot g_{ik})_{x_l}-\sum_l(\dot g_{li})_{x_k}p_l\\
&+\frac{1}{2}\ddot g_{ik}-\frac{1}{4}\sum_j\dot g_{ij}\dot g_{jk}\Big)\partial_{p_k}.
\end{split}
\]

From the proof of Proposition \ref{mechanicalcurv}, the vector field $[\vec\ham,[\vec\ham,V_i^t]]$ is contained in the vertical space $\ver_{(\x,\p)}$ at $(\x,\p)$. Therefore, it follows from the above computations that
\[
\begin{split}
\omega(\ddot{\bar e}_i^{t,t},\dot{\bar e}_j^{t,t}) &=\omega([\vec\ham,[\vec\ham,V_i^t]],[\vec\ham,V_j^t])-\sum_l p_l(\dot g_{ij})_{x_l}\\
 &\quad -\frac{1}{2}\ddot g_{ij}+\frac{1}{4}\sum_k\dot g_{ik}\dot g_{kj}+\sum_l(\dot g_{li})_{x_j}p_l.
\end{split}
\]

Hence,
\[
\begin{split}
\dot\Omega_{ij}^{t,t}&=\omega(\ddot{\bar e}_i^{t,t},\dot{\bar e}_j^{t,t})-\omega(\ddot{\bar e}_j^{t,t},\dot{\bar e}_i^{t,t})\\
&=\omega([\vec\ham,[\vec\ham,V_i^t]],[\vec\ham,V_j^t])-\omega([\vec\ham,[\vec\ham,V_j^t]],[\vec\ham,V_i^t])\\
&\quad+\sum_l(\dot g_{li})_{x_j}p_l-\sum_l(\dot g_{lj})_{x_i}p_l.
\end{split}
\]

Finally, from the proof of Proposition \ref{mechanicalcurv} again, we obtain the following equation at $(\x,\p)$
\[
\begin{split}
-\R^t_{(\x,\p)}&=\frac{1}{2}\dot\Omega^{t,t}\tilde E^{t,t}+\dot{\tilde F}^{t,t}\\
&=\sum_k\Big(-\sum_{i,r}R_{klr}^ip_ip_r-U_{x_lx_k}+\sum_l(\dot g_{ik})_{x_l}p_l-\frac{1}{2}\sum_l(\dot g_{li})_{x_k}p_l\\
&\quad -\frac{1}{2}\sum_l(\dot g_{lk})_{x_i}p_l+\frac{1}{2}\ddot g_{ik}-\frac{1}{4}\sum_j\dot g_{ij}\dot g_{jk}\Big)\partial_{p_k}
\end{split}
\]
and
\[
\begin{split}
\textbf{tr}(\R^t_{(\x,\p)})&=\sum_{l,r,i}R_{llr}^ip_ip_r+\sum_l U_{x_lx_l}\\
&-\left(\sum_{i,l}(\dot g_{ii})_{x_l}p_l-\sum_{i,l}(\dot g_{li})_{x_i}p_l+\frac{1}{2}\sum_i\ddot g_{ii}-\frac{1}{4}\sum_{i,j}\dot g_{ij}\dot g_{ji}\right).
\end{split}
\]
\end{proof}

Finally, we look at the following special case.

\begin{thm}\label{Ricciflow}
Let us fix two smooth functions $c_1,c_2:\Real\to\Real$ of time $t$. Let $g$ be a smooth solution of the following equation
\begin{equation}\label{Ricci}
\dot g=c_1\ric+c_2g.
\end{equation}
Then
\[
\begin{split}
&\tr\,(\R_{(\x,\p)}^t)=\ric(t,\x)(\p,\p)+\Delta U(t,\x)-\frac{1}{2}c_1(t)\left<\nabla R(t,\x),\p\right>\\
&-\frac{\dot c_1(t)}{2}R(t,\x)+\frac{c_1(t)^2}{4}|\ric(t,\x)|^2+\frac{c_1(t)^2}{4}\Delta R(t,\x)-\frac{n\dot c_2(t)}{2}-\frac{nc_2(t)^2}{4},
\end{split}
\]
where $R$ is the scalar curvature of the metric $g$ at time $t$.
\end{thm}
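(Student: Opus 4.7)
The plan is to start from the formula of Theorem \ref{curvtime} and substitute the evolution equation $\dot g = c_1\ric + c_2 g$ termwise, reducing each of the six quantities appearing there to expressions in $R$, $\ric$, $|\ric|^2$, $\Delta R$, $\nabla R$ and the coefficient functions $c_1, c_2$ and their derivatives. The only nontrivial input beyond direct index computation is the standard Riemannian identities: the contracted second Bianchi identity $\diver \ric = \tfrac{1}{2}\nabla R$, the variation formula $\dot R = -\Delta(\tr_g h) + \diver\diver(h) - \langle h,\ric\rangle$ for scalar curvature under $\dot g = h$, and the identity $\dot g^{ij} = -g^{ik}g^{jl}\dot g_{kl}$.

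First I would handle the two linear-in-$\p$ terms. Since $\tr_g(\dot g) = c_1 R + c_2 n$, the constant $c_2 n$ drops and we get $-\langle\nabla\tr_g(\dot g),\p\rangle = -c_1\langle\nabla R,\p\rangle$. For the divergence term, $\diver g \equiv 0$ and the Bianchi identity give $\diver(\dot g) = \tfrac{c_1}{2}\nabla R$, so that the two terms combine to $-\tfrac{c_1}{2}\langle\nabla R,\p\rangle$, which matches the target exactly.

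Next I would compute $|\dot g|^2$ directly: expanding $g^{ik}g^{jl}(c_1\ric_{ij}+c_2 g_{ij})(c_1\ric_{kl}+c_2 g_{kl})$ and using $g^{ik}g^{jl}g_{ij}g_{kl} = n$ and $g^{ik}g^{jl}\ric_{ij}g_{kl} = R$ yields
\[
|\dot g|^2 = c_1^2|\ric|^2 + 2c_1c_2 R + c_2^2 n.
\]
For $\tr(\ddot g)$ I differentiate $\dot g = c_1\ric + c_2 g$ once more and trace against $g^{-1}$. The only nontrivial piece is $g^{ij}\dot\ric_{ij}$, which I extract from $\dot R = \dot g^{ij}\ric_{ij} + g^{ij}\dot\ric_{ij}$ together with $\dot g^{ij} = -c_1\ric^{ij} - c_2 g^{ij}$, giving $g^{ij}\dot\ric_{ij} = \dot R + c_1|\ric|^2 + c_2 R$. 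I then eliminate $\dot R$ using the scalar-curvature variation formula applied to $h = c_1\ric + c_2 g$, which produces $\dot R = -\tfrac{c_1}{2}\Delta R - c_1|\ric|^2 - c_2 R$. Collecting everything yields
\[
\tr(\ddot g) = \dot c_1 R - \tfrac{c_1^2}{2}\Delta R + c_1 c_2 R + \dot c_2 n + c_2^2 n.
\]

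Finally I would plug the computed expressions for $-\tfrac12\tr(\ddot g)$ and $\tfrac14|\dot g|^2$ into Theorem \ref{curvtime}; the $|\ric|^2$ and $\Delta R$ contributions add with the correct sign, the $c_1 c_2 R$ terms cancel between the two, and what remains is exactly the stated formula. There is no real obstacle here beyond bookkeeping: the whole proof is the observation that the six terms of Theorem \ref{curvtime} reorganize cleanly once the Bianchi identity and the scalar curvature evolution formula are inserted, which is why the hypothesis \eqref{Ricci} is the natural one from the Hamiltonian side.
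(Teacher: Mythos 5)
Your proof is correct and follows essentially the same route as the paper's: both start from Theorem \ref{curvtime}, use the twice contracted Bianchi identity to handle the $\diver(\dot g)$ term, compute $|\dot g|^2$ by direct expansion, and invoke the scalar-curvature variation formula $\dot R = -\Delta(\tr\dot g) + \diver\diver(\dot g) - \langle\dot g,\ric\rangle$ to eliminate $\dot R$. The only difference is bookkeeping: you compute $\tr(\ddot g)$ by differentiating the flow equation and tracing against $g^{-1}$ (extracting $g^{ij}\dot\ric_{ij}$ from $\dot R = \dot g^{ij}\ric_{ij} + g^{ij}\dot\ric_{ij}$), whereas the paper differentiates $\tr(\dot g) = c_1R + c_2 n$ in $t$ and implicitly uses $\tr(\ddot g) = \frac{d}{dt}\tr(\dot g) + |\dot g|^2$; these are two ways of organizing the same cancellation and lead to the identical final expression.
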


\begin{proof}
By taking the trace of (\ref{Ricci}), we obtain
\[
\tr(\dot g)=c_1R+c_2n.
\]

On the other hand, if we take the covariant derivative of (\ref{Ricci}), then
\[
\nabla_{v_i}\dot g(\p,v_i)=c_1\nabla_{v_i}\ric(\p,v_i).
\]

Therefore, if we sum over $i$ and use the twice contracted Bianchi identity \cite[(1.19)]{ChLuNi}, then
\[
\diver(\dot g)=\frac{1}{2}c_1\,dR.
\]

If we take the divergence again, then we obtain
\[
\diver(\diver(\dot g))=\frac{1}{2}c_1\Delta R.
\]

By using (\ref{Ricci}) again, we obtain
\[
\left<\dot g,\ric\right>=\left<c_1\,\ric+c_2\,g,\ric\right>=c_1|\ric|^2+c_2R.
\]

Therefore, by combining the above calculations with \cite[Lemma 2.7]{ChLuNi}, we have
\begin{equation}\label{dotR}
\begin{split}
\dot R&=-\Delta(\tr(\dot g))+\diver(\diver(\dot g))-\left<\dot g,\ric\right>\\
&=-\frac{c_1}{2}\Delta R-c_1|\ric|^2-c_2R.
\end{split}
\end{equation}

A simple calculation also gives
\[
|\dot g|^2=|c_1\ric+c_2g|^2=c_1^2|\ric|^2+2c_1c_2\,R+n c_2^2.
\]

Therefore, the above calculations reduce Theorem \ref{curvtime} to
\[
\begin{split}
\tr\,(\R_{(\x,\p)}^t) &=\ric_\x(\p,\p)+\Delta U(\x)-\frac{1}{2}c_1\left<\nabla R,\p\right>\\
&-\frac{1}{2}\left(\dot c_1R+c_1\dot R+n\dot c_2\right)-\frac{1}{4}(c_1^2|\ric|^2+2c_1c_2R+n c_2^2)\\
&=\ric_\x(\p,\p)+\Delta U(\x)-\frac{1}{2}c_1\left<\nabla R,\p\right>\\
&-\frac{\dot c_1}{2}R-\frac{n\dot c_2}{2}-\frac{nc_2^2}{4}+\frac{c_1^2}{4}|\ric|^2+\frac{c_1^2}{4}\Delta R.
\end{split}
\]
\end{proof}

\smallskip

\section{Proof of Theorem \ref{main} and Corollary \ref{cor1}}\label{POT}

In this section, we give the proof of Theorem \ref{main} and show that Corollary \ref{cor1} can be seen as a consequence. Before this, we need to introduce the volume distortion factor $v$ in the statement of the theorem. It is defined as follows
\[
v(t,x,p)=\log\left((\pi^*\m_t)_{(x,p)}(f^{t,t}_1,...,f^{t,t}_n)\right).
\]
Clearly, the definition is independent of the choice of a canonical frame due to Proposition \ref{canonical}.

Let $\cF:\Real\to\Real$ and recall that the functional $\F$ is defined by
\[
\F(t,\mu)=\int_M\cF(\rho(t,x))d\mu(x),
\]
where $\mu=\rho(t,\cdot)\m_t$.

Let $\f:M\to\Real$ be a smooth function which defines the smooth displacement interpolation $\mu_t$. Recall that this means
\[
\mu_t=(\varphi_t)_*\mu
\]
where $\varphi_t(x)=\pi(\Phi_t(d\f_x))$. Let $u$ be the solution of the equation (\ref{HJE}) with initial condition $u(0,\cdot)=\f(\cdot)$. First we have the following lemma.

\begin{lem}\label{Fpp}
The functional $\F$ satisfies
\[
\begin{split}
&\frac{d}{dt}\F(t,\mu_t)\\
&=-\int_M \cF'(r^t_\x) r^t_\x\left(\frac{d}{dt}v(t,\Phi_t(\x,d\f))+\Delta_\ham u(t,\varphi_t(\x))\right)d\mu(\x)
\end{split}
\]
and
\[
\begin{split}
&\frac{d^2}{dt^2}\F(t,\mu_t)\\
&=\int_M \left(\cF''(r^t_\x)r^t_\x+\cF'(r^t_\x)\right)r^t_\x\Big(\frac{d}{dt}v(t,\Phi_t(\x,d\f))+\Delta_\ham u(t,\varphi_t(\x))\Big)^2\\
&\quad +\cF'(r^t_\x)r^t_\x\Big(|\hess_\ham u(t,\varphi_t(x))|^2+\tr(\R^{t}_{\Phi_{t}(\x,d\f)})-\frac{d^2}{dt^2}v(t,\Phi_t(\x,d\f))\Big) d\mu(\x).
\end{split}
\]
\end{lem}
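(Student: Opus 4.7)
The plan is to reduce everything to computing the first two time derivatives of a single scalar, the logarithm of the Jacobian of $\varphi_t$ relative to the reference measures $\m_0$ and $\m_t$. Since $\mu_t=(\varphi_t)_*\mu$, writing $\varphi_t^*\m_t=J^{\m}_t\,\m_0$ and changing variables gives $\rho_0(\x)=\rho(t,\varphi_t(\x))\,J^{\m}_t(\x)$, where $\rho_0$ is the density of $\mu$ with respect to $\m_0$, so that
\[
\F(t,\mu_t)=\int_M\cF(r^t_\x)\,d\mu(\x),\qquad r^t_\x=\frac{\rho_0(\x)}{J^{\m}_t(\x)}.
\]
Then $\dot r^t_\x=-r^t_\x\tfrac{d}{dt}\log J^{\m}_t$ and $\ddot r^t_\x=r^t_\x\bigl[(\tfrac{d}{dt}\log J^{\m}_t)^2-\tfrac{d^2}{dt^2}\log J^{\m}_t\bigr]$, and the task reduces to evaluating these two derivatives.

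To compute $J^{\m}_t$ I would use the canonical frame. Equation (\ref{AB1}) shows that $d\varphi_t(d\pi(f^{0,0}_i))=\sum_j B_{ij}(t)\,d\pi(\tilde f^{t,t}_j)$, while the definition of the volume distortion gives $\m_t(d\pi(\tilde f^{t,t}_1),\dots,d\pi(\tilde f^{t,t}_n))=e^{v(t,\Phi_t(\x,d\f))}$ at $\varphi_t(\x)$, and analogously $\m_0(d\pi(f^{0,0}_1),\dots,d\pi(f^{0,0}_n))=e^{v(0,\x,d\f)}$ at $\x$. Pulling $\m_t$ back through $\varphi_t$ therefore yields
\[
\log J^{\m}_t(\x)=\log\det B(t)+v(t,\Phi_t(\x,d\f))-v(0,\x,d\f).
\]
Differentiating once and using $\dot B=-A$, $S(t)=-B(t)^{-1}A(t)$, and $\tr S(t)=\Delta_\ham u(t,\varphi_t(\x))$ gives $\tfrac{d}{dt}\log J^{\m}_t=\Delta_\ham u+\tfrac{d}{dt}v$, which immediately produces the first formula in the lemma upon inserting into $\dot\F=\int\cF'(r^t_\x)\dot r^t_\x\,d\mu$.

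For the second formula I would differentiate once more and apply the matrix Riccati equation (\ref{Riccati}): taking the trace of $\dot S+S^2+[\R^t]=0$ yields $\tfrac{d}{dt}\Delta_\ham u=-\tr(S^2)-\tr\R^t_{\Phi_t(\x,d\f)}$. Since $d(du)$ is a Lagrangian subspace, the matrices $A$ and $B$ satisfy $AB^T=BA^T$, so $S=-B^{-1}A$ is symmetric and $\tr(S^2)=|\hess_\ham u|^2$ in the Hilbert--Schmidt sense. Hence
\[
\frac{d^2}{dt^2}\log J^{\m}_t=\frac{d^2}{dt^2}v(t,\Phi_t(\x,d\f))-|\hess_\ham u|^2-\tr\R^t_{\Phi_t(\x,d\f)}.
\]
Substituting $\dot r^t_\x$ and $\ddot r^t_\x$ into $\ddot\F=\int\bigl[\cF''(r^t_\x)(\dot r^t_\x)^2+\cF'(r^t_\x)\ddot r^t_\x\bigr]d\mu$ and grouping the $\cF''(r^t_\x)r^t_\x+\cF'(r^t_\x)$ combination gives the claimed identity.

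The main delicate step is the Jacobian identity in the middle paragraph: one must carefully relate the pullback $\varphi_t^*\m_t$ to the matrix $B(t)$ and to the volume distortions at the two endpoints, using that the horizontal projections $d\pi(\tilde f^{t,t}_j)$ of the canonical frame form bases of $T_{\varphi_t(\x)}M$ (this is where the Darboux property of the canonical frame really enters). Once this geometric identity is in place, both derivative computations reduce to routine calculus combined with the matrix Riccati equation already established in Section \ref{OPAHJE}.
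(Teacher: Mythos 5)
Your proposal is correct and follows essentially the same route as the paper: you use the change‑of‑variables relation (the paper writes it as $\rho(0,\x)e^{v(0,\x,d\f)}=\rho(t,\varphi_t(\x))\det B(t)\,e^{v(t,\Phi_t(\x,d\f))}$, which is exactly your Jacobian identity $\rho_0=r^t_\x J^{\m}_t$ after taking logarithms), then differentiate using $\dot B=-A$, $S=-B^{-1}A$, and the matrix Riccati equation $\dot S+S^2+[\R^t]=0$. The only slight addition on your part is spelling out why $\tr(S^2)=|\hess_\ham u|^2$ via symmetry of $S$; the paper uses this implicitly.
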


\begin{proof}
Let $E^{t,s}=(e^{t,s}_1,...,e^{t,s}_n)^T$ be a canonical frame at $(\x,d\f)$ and let $\tilde E^{t,\tau}=(\tilde e^{t,\tau}_1,...,\tilde e^{t,\tau}_n)^T$ be the canonical frame at $\Phi_t(\x,d\f)$ defined by
\[
\tilde e^{t,\tau}_i=d\Phi_\tau(e^{t,0}_i).
\]
Let $A(t)$ and $B(t)$ be as in (\ref{AB}). It follows from (\ref{AB1}) that
\[
d\varphi_t(d\pi(f^{0,0}_i))=\sum_{j=1}^nB_{ij}(t)d\pi(\tilde f^{t,t}_j).
\]

Let $\rho(t,\cdot)$ be the function defined by $\mu_t:=(\varphi_t)_*\mu=\rho(t,\cdot)\m_t$. The two functions $\rho(0,\cdot)$ and $\rho(t,\cdot)$ are related by
\[
\rho(0,\x)\,e^{v(0,\x,d\f)}=\rho(t,\varphi_t(\x))\det B(t)\,e^{v(t,\Phi_t(\x, d\f))}.
\]

If we differentiate this with respect to $t$, then we obtain
\[
\dot r^t_\x=-r^t_\x\left(\frac{d}{dt}v(t,\Phi_t(\x,d\f))+\tr\left(S(t)\right)\right),
\]
where $r^t_\x=\rho(t,\varphi_t(\x))$ and $S(t)=-B(t)^{-1}A(t)$.

It follows that
\[
\begin{split}
\frac{d}{dt}\F(t,\mu_t)&=\frac{d}{dt}\int_M \cF(\rho(t,\varphi_t(\x)))\ d\mu(\x)\\
&=\int_M \cF'(r^t_\x)\,\dot r^t_\x\ d\mu(\x)\\
&=-\int_M \cF'(r^t_\x) r^t_\x\left(\frac{d}{dt}v(t,\Phi_t(\x,d\f))+\tr\left(S(t)\right)\right)d\mu(\x)
\end{split}
\]
and the first statement of the lemma follows.

If we differentiate the above with respect to time $t$, then we obtain
\[
\begin{split}
&\frac{d^2}{dt^2}\F(t,\mu_t)\\
&=\int_M\cF''(r^t_\x)(\dot r^t_\x)^2 +\cF'(r^t_\x)\ddot r^t_\x d\mu(\x)\\
&=\int_M \left(\cF''(r^t_\x)r^t_\x+\cF'(r^t_\x)\right)r^t_\x\left(\frac{d}{dt}v(t,\Phi_t(\x,d\f))+\tr(S(t))\right)^2\\
& \quad +\cF'(r^t_\x)r^t_\x\left(|S(t)|^2+\tr(\R^{t}_{\Phi_t(\x,d\f)})-\frac{d^2}{dt^2}v(t,\Phi_t(\x,d\f)\right) d\mu(\x)
\end{split}
\]
as claimed.
\end{proof}

\begin{proof}[Proof of Theorem \ref{main}]
By Lemma \ref{Fpp}, we have
\[
\begin{split}
&\frac{d^2}{dt^2}\F(t,\mu_t)\\
&\geq \int_M|S(t)|^2+\tr(\R^{t}_{\Phi_t(\x,d\f)})-\frac{d^2}{dt^2}v(t,\Phi_t(\x,d\f)) d\mu(\x)\\
&\geq \int_Mb(t)\tr(S(t))-\frac{nb(t)^2}{4}+\tr(\R^{t}_{\Phi_t(\x,d\f)})-\frac{d^2}{dt^2}v(t,\Phi_t(\x,d\f))\, d\mu(\x).
\end{split}
\]

Therefore, by Lemma \ref{Fpp} again, we obtain
\[
\begin{split}
&\frac{d^2}{dt^2}\F(t,\mu_t)+b(t)\frac{d}{dt}\F(t,\mu_t)\\
&\geq \int_M\tr(\R^{t}_{\Phi_t(\x,d\f)})-\frac{nb(t)^2}{4}-\frac{d^2}{dt^2}v(t,\Phi_t(\x,d\f))\\
 &\quad-b(t)\frac{d}{dt}v(t,\Phi_t(\x,d\f)) d\mu(\x).
\end{split}
\]
\end{proof}

Next, we prove Corollary \ref{cor1}. In fact, we will first prove results which work for more general Hamiltonian systems for which Corollary \ref{cor1} follows as a consequence. First, we need the following lemma.

\begin{lem}\label{lem}
Assume that the Hamiltonian $\ham$ satisfies $\ham(\x,\lambda\p)=\lambda^m\ham(\x,\p)$. Then
\begin{enumerate}
\item $\pi\Phi_{\lambda^{m-1} t}(x,p)=\pi\Phi_{t}(x,\lambda p)$,
\item $\tr(\R_{(x,\lambda p)})=\lambda^{2m-2}\tr(\R_{(x,p)})$,
\item $\mathcal L^2_{\vec\ham}v(\Phi_t(x,\lambda p))=\lambda^{2m-2}\mathcal L^2_{\vec\ham}v(\Phi_{\lambda^{m-1}t}(x,p))$,
\item $|\hess_\ham (\lambda \f)(x)|^2=\lambda^{2m-2}|\hess_\ham \f(x)|^2$
\end{enumerate}
\end{lem}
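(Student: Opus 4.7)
The plan is to reduce all four claims to a single equivariance between the Hamiltonian flow $\Phi_t$ and the fibrewise dilation $D_\lambda:T^*M\to T^*M$, $D_\lambda(x,p):=(x,\lambda p)$. Differentiating $\ham(x,\lambda p)=\lambda^m\ham(x,p)$ gives $\ham_p(x,\lambda p)=\lambda^{m-1}\ham_p(x,p)$ and $\ham_x(x,\lambda p)=\lambda^m\ham_x(x,p)$, and substituting into the Hamilton equations (\ref{Ham}) together with the time reparametrization $t\mapsto \lambda^{m-1}t$ yields
\[
\Phi_t\circ D_\lambda=D_\lambda\circ\Phi_{\lambda^{m-1}t}.
\]
Projecting with $\pi$ gives (1) immediately.

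The next step is to transfer this equivariance to Jacobi curves and canonical frames. Since $D_\lambda$ maps fibres to fibres, $dD_\lambda$ preserves $\ver$, and from $D_\lambda^*\omega=\lambda\omega$ combined with the flow equivariance one obtains $J^{t}_{(x,\lambda p)}=dD_\lambda(J^{\lambda^{m-1}t}_{(x,p)})$. A short calculation using that the inner product $\langle\cdot,\cdot\rangle^{t,s}$ picks up a factor of $\lambda^m$ under the push-forward by $dD_\lambda$ then shows that if $E^{t}=(e^{t}_1,\dots,e^{t}_n)^T$, $F^{t}=(f^{t}_1,\dots,f^{t}_n)^T$ is a canonical frame at $(x,p)$, then
\[
\tilde E^{t}:=\lambda^{-m/2}\,dD_\lambda(E^{\lambda^{m-1}t}),\qquad \tilde F^{t}:=\lambda^{(m-2)/2}\,dD_\lambda(F^{\lambda^{m-1}t})
\]
is a canonical frame at $(x,\lambda p)$; the prefactor $\lambda^{-m/2}$ is forced by orthonormality, and the exponent $(m-2)/2$ in $\tilde F$ arises because differentiating in $t$ produces an additional factor $\lambda^{m-1}$.

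Given this transformation rule, claims (2)--(4) are mechanical. For (2), differentiating $\tilde e^{t}_i$ twice and using $\ddot e^{s}_i=-\R^{s}_{(x,p)}e^{s}_i$ together with $dD_\lambda(e^{\lambda^{m-1}t}_j)=\lambda^{m/2}\tilde e^{t}_j$ shows that the matrix of $\R^{t}_{(x,\lambda p)}$ in $\tilde E^{t}$ equals $\lambda^{2m-2}$ times the matrix of $\R^{\lambda^{m-1}t}_{(x,p)}$ in $E^{\lambda^{m-1}t}$; taking traces and using time-independence of $\R$ in the autonomous case yields the identity. For (3), the identities $d\pi\circ dD_\lambda=d\pi$ and $\tilde f^{0}_i=\lambda^{(m-2)/2}dD_\lambda(f^{0}_i)$ give $v(x,\lambda p)=v(x,p)+\tfrac{n(m-2)}{2}\log\lambda$, a constant shift that vanishes under differentiation; then applying $\Phi_s\circ D_\lambda=D_\lambda\circ\Phi_{\lambda^{m-1}s}$ and the chain rule turns $\mathcal L^2_{\vec\ham}v(\Phi_t(x,\lambda p))$ into $\lambda^{2m-2}\mathcal L^2_{\vec\ham}v(\Phi_{\lambda^{m-1}t}(x,p))$. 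For (4), one starts from $d(d(\lambda\f))_x=dD_\lambda\circ d(d\f)_x$ and expands both sides in the two canonical frames at $(x,d\f)$ and $(x,\lambda d\f)$; the matrices $A,B$ in (\ref{AB}) become $\tilde A=\lambda^{m-1}A$ and $\tilde B=B$, so $\tilde S=\lambda^{m-1}S$ by (\ref{S}) and the Hilbert--Schmidt norm scales by the square, $\lambda^{2m-2}$.

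The only real obstacle is bookkeeping of the several $\lambda$-powers coming from (i) the symplectic scaling $D_\lambda^*\omega=\lambda\omega$, (ii) the time reparametrization $t\mapsto\lambda^{m-1}t$, (iii) $dD_\lambda$ acting as the identity on the horizontal coordinate vectors and as multiplication by $\lambda$ on the vertical coordinate vectors, and (iv) the orthonormalization factor $\lambda^{-m/2}$ forced by the inner-product scaling. Once these are consistently assembled, all four identities fall out uniformly from the canonical frame transformation above.
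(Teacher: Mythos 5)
Your proof is correct, and it takes a genuinely different route from the paper's. The paper treats (2) and (4) by working in local coordinates: it invokes Agrachev's explicit formula for the coefficients $c_{ij}$ of the horizontal bundle, verifies by direct computation that $c_{ij}(x,\lambda p)=\lambda c_{ij}(x,p)$, and then reads off the scalings of $\R_{(x,p)}(\partial_{p_i})$ and $\hess_\ham\f$ from their coordinate expressions in terms of $c_{ij}$. You instead package everything into the single conjugacy $\Phi_t\circ D_\lambda=D_\lambda\circ\Phi_{\lambda^{m-1}t}$ for the fibrewise dilation $D_\lambda$, and derive the transformation rule $\tilde E^t=\lambda^{-m/2}dD_\lambda(E^{\lambda^{m-1}t})$, $\tilde F^t=\lambda^{(m-2)/2}dD_\lambda(F^{\lambda^{m-1}t})$ for canonical frames; the powers in your (2), (3), (4) all follow mechanically from this one rule, using $D_\lambda^*\omega=\lambda\omega$, $dD_\lambda(\ver)=\ver$, and $d\pi\circ dD_\lambda=d\pi$. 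For part (3) the two arguments are essentially identical (both reduce to the constant shift $v(x,\lambda p)=v(x,p)+\tfrac{n(m-2)}{2}\log\lambda$ and the flow equivariance), and part (1) is the same in both. The conceptual advantage of your route is that it avoids the Agrachev coordinate formula for $c_{ij}$ and makes all four statements visibly instances of the same equivariance; the paper's route is more explicit and makes the degree-of-homogeneity count on each coefficient visible. Both are complete once the bookkeeping of powers of $\lambda$ (which you track carefully, and which does hinge on $D_\lambda^*\omega=\lambda\omega$ for the orthonormalization factor $\lambda^{-m/2}$) is assembled consistently.
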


\begin{proof}
Let $(x(t),p(t))$ be a solution of the equation (\ref{Ham}). By using homogeneity of $\ham$, one can show that
\[
\left(x(\lambda^{m-1} t),\lambda p(\lambda^{m-1} t)\right)
\]
is also a solution of (\ref{Ham}) and the first claim follows.

Let $(x_1,...,x_n,p_1,...,p_n)$ be local coordinates of a chart on $T^*M$ and let $c_{ij}$ be functions defined on the this chart such that
\[
\partial_{x_i}+\sum_j c_{ij}(x,p)\partial_{p_j}
\]
are contained in the horizontal bundle.

By \cite{Ag}, we have the following formula
\[
\begin{split}
&\sum_{m,s}\ham_{p_ip_m}c_{ms}\ham_{p_sp_j}\\
& =\frac{1}{2}\sum_{k}\left(\ham_{p_k}\ham_{p_ip_jx_k} -\ham_{x_k}\ham_{p_ip_jp_k}-\ham_{p_ix_k}\ham_{p_kp_j}-\ham_{p_ip_k}\ham_{x_kp_j}\right).
\end{split}
\]

It follows from homogeneity of $\ham$ that
\[
c_{ij}(x,\lambda p)=\lambda c_{ij}(x,p).
\]
By Proposition \ref{Rformula} (see also \cite{Ag}),
\[
\R_{(x,p)}(\partial_{p_i})=-[\vec \ham,[\vec \ham,\partial_{p_i}]_\hor]_\ver.
\]
Here the subscripts $\hor$ and $\ver$ denote the horizontal and vertical parts of the corresponding vectors, respectively.

Since $\ham$ is homogeneous in $p$, $\vec\ham$ is horizontal (see \cite{AgGa}). It follows that $\vec\ham=\sum_j\ham_{p_j}(\partial_{x_j}+\sum_kc_{jk}\partial_{p_k})$ and so
\[
\begin{split}
&\R_{(x,p)}(\partial_{p_i})=[\sum_l\ham_{p_l}(\partial_{x_l}+\sum_sc_{ls}\partial_{p_s}),\sum_j\ham_{p_ip_j}(\partial_{x_j}+\sum_kc_{jk}\partial_{p_k})]_\ver\\
&=\sum_{j,l}\ham_{p_ip_j}\ham_{p_l}[\partial_{x_l}+\sum_sc_{ls}\partial_{p_s},\partial_{x_j}+\sum_kc_{jk}\partial_{p_k}]_\ver\\
&=\sum_{j,l}\ham_{p_ip_j}\ham_{p_l}((c_{jk})_{x_l}-(c_{lk})_{x_j}+\sum_s(c_{ls}(c_{jk})_{p_s}-c_{js}(c_{lk})_{x_s}))\partial_{p_k}.
\end{split}
\]
Therefore, $\R_{(x,\lambda p)}(\partial_{p_i})=\lambda^{2m-2}\R_{(x,p)}(\partial_{p_i})$. The second assertion follows from this. Note that the trace of $\R_{(x,\lambda p)}$ is taking with respect to the inner product defined by $\ham_{p_ip_j}(x,\lambda p)$.

By the definition of horizontal frame $f_1,...,f_n$ and homogeneity of $\ham$, we have
\[
\begin{split}
&(\pi^*\m)_{(x,\lambda p)}(f_1(x,\lambda p),...,f_n(x,\lambda p))\\
&=\lambda^{\frac{n(m-2)}{2}}\m_{x}(\pi_*f_1(x,p),...,\pi_*f_n(x,p)).
\end{split}
\]
It follows that
\[
\begin{split}
&\frac{d^2}{dt^2}v(\Phi_t(x,\lambda p))=\frac{d^2}{dt^2}v(x(\lambda^{m-1} t),\lambda p(\lambda^{m-1} t))\\
&=\frac{d^2}{dt^2}\log\left(\m_{x(\lambda^{m-1} t)}(\pi_*f_1(x(\lambda^{m-1} t),p(\lambda^{m-1} t)),...,\pi_*f_n(x(\lambda^{m-1} t),p(\lambda^{m-1} t)))\right)\\
&=\lambda^{2m-2}\mathcal L^2_{\vec\ham}v(x(\lambda^{m-1} t),p(\lambda^{m-1} t)).
\end{split}
\]
This gives the third assertion.

Finally, $\nabla_\ham \f$ is the map which sends $\partial_{x_i}+\sum_jc_{ij}(x,d\f)\partial_{p_j}$ to $\sum_j(\f_{x_ix_j}(x)-c_{ij}(x,d\f))\partial_{p_j}$. Therefore, the final assertion follows.
\end{proof}

\begin{thm}\label{sidethm}
\begin{enumerate}
\item Assume that there is a point $(\x,\p)$ outside the zero section of the cotangent bundle $T^*M$ such that
\[
\tr(\R_{(\x,\p)})-\mathcal L^2_{\vec\ham}v(\x,\p)<\K(\x,\p).
\]
($\mathcal L_V$ denotes the directional derivative with respect to the vector field $V$) Then there is a $\tau >0$ and a smooth potential $\f$ such that the displacement interpolation $\mu_t$ satisfies
\[
\begin{split}
&\frac{d^2}{dt^2}\F(t,\mu_t)<\int_M\K(\Phi_t(\x,d\f)) d\mu(\x)
\end{split}
\]
for all $t$ in $[0,\tau]$.
\item Assume that $\ham$ is homogeneous of degree $m$ in the $\p$ variable and there is a point $(\x,\p)$ outside the zero section of $T^*M$ such that
\[
\tr(\R_{(\x,\p)})-\mathcal L^2_{\vec\ham}v(\x,\p)<K\ham^{\frac{2m-2}{m}}(\x,\p).
\]
Then, for all $T>0$, there is a smooth potential $\f$ such that the displacement interpolation $\mu_t$ satisfies
\[
\begin{split}
&\frac{d^2}{dt^2}\F(t,\mu_t)<K\int_M\ham^{\frac{2m-2}{m}}(x,d\f) d\mu(\x).
\end{split}
\]
\item Assume that $\ham$ is homogeneous of degree $2$ in the $\p$ variable and there is a point $(\x,\p)$ outside the zero section of $T^*M$ such that
\[
\tr(\R_{(\x,\p)})-\mathcal L^2_{\vec\ham}v(\x,\p)<K\ham(\x,\p).
\]
Then, for all $T>0$, there is a smooth potential $\f$ such that the displacement interpolation $\mu_t$ satisfies
\[
\begin{split}
&\frac{d^2}{dt^2}\F(t,\mu_t)<\frac{K}{T}C_T(\mu_0,\mu_T).
\end{split}
\]
\end{enumerate}
\end{thm}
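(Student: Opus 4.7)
The plan is to apply Lemma \ref{Fpp} with $\cF(r)=\log r$. Since then $\cF''(r)r+\cF'(r)=0$ and $\cF'(r)r=1$, the formula collapses to
\[
\frac{d^2}{dt^2}\F(t,\mu_t) = \int_M \Big(|\hess_\ham u(t,\varphi_t(\x))|^2 + \tr(\R^t_{\Phi_t(\x,d\f)}) - \frac{d^2}{dt^2}v(t,\Phi_t(\x,d\f))\Big)\,d\mu(\x).
\]
The idea is to engineer $\f$ and a test measure $\mu$ so that the pointwise integrand is strictly below the relevant bound on $\mathrm{supp}\,\mu$, uniformly in $t$ on the desired interval; integrating then gives the conclusion.

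\textbf{Part (1).} First I would pick a smooth $\f:M\to\Real$ with $d\f_{\x_0}=\p$ and $\hess_\ham\f(0,\x_0)=0$. This is possible because $\hess_\ham\f$ at $(\x_0,\p)$ depends only on the $2$-jet of $\f$ at $\x_0$, which may be freely prescribed. Proposition \ref{HJexist} then provides a smooth HJ solution $u$ on $[0,T']\times M$ for some $T'>0$. Continuity of $\hess_\ham u(t,\varphi_t(\cdot))$, $\R^t$, $v$ and $\K$ along the trajectory, combined with the strict hypothesis $\tr(\R_{(\x_0,\p)})-\mathcal L^2_{\vec\ham}v(\x_0,\p)<\K(\x_0,\p)$, then yields a neighborhood $U\ni\x_0$ and some $\tau\in(0,T']$ on which
\[
|\hess_\ham u(t,\varphi_t(\x))|^2+\tr(\R^t_{\Phi_t(\x,d\f)})-\frac{d^2}{dt^2}v(t,\Phi_t(\x,d\f))<\K(\Phi_t(\x,d\f))
\]
holds for all $(t,\x)\in[0,\tau]\times U$. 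Taking $\mu$ to be any smooth probability measure supported in $U$ and integrating completes Part (1).

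\textbf{Part (2).} Given an arbitrary $T>0$ with $\ham$ homogeneous of degree $m$, I would first apply Part (1) with $\K(\x,\p):=K\,\ham^{(2m-2)/m}(\x,\p)$ to obtain $\f_0$, $\tau>0$, and a strict pointwise inequality on $[0,\tau]\times U$. Then set $\f:=\lambda\f_0$ with $\lambda:=(\tau/T)^{1/(m-1)}$, so that $d\f_{\x_0}=\lambda\p$. By Lemma \ref{lem}(1) the projected $\f$-trajectory at time $t\in[0,T]$ coincides with the projected $\f_0$-trajectory at time $s=\lambda^{m-1}t\in[0,\tau]$, and by homogeneity one verifies $u(t,\cdot)=\lambda\,u_0(\lambda^{m-1}t,\cdot)$. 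Parts (2)--(4) of Lemma \ref{lem} then show that $|\hess_\ham u|^2$, $\tr(\R^t)$, $\mathcal L^2_{\vec\ham}v$, and $\ham^{(2m-2)/m}$ all scale by the common factor $\lambda^{2m-2}$ when passing from the $\f_0$ problem to the $\f$ problem. Multiplying the Part (1) inequality by $\lambda^{2m-2}$ and integrating over $\mu$ yields Part (2).

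\textbf{Part (3) and main obstacle.} When $m=2$, the Hamiltonian is degree-$2$ homogeneous, hence conserved along its flow and equal to the Lagrangian on optimal trajectories; thus $c_T(x,\varphi_T(x))=T\,\ham(x,d\f_x)$. Optimality of $\varphi_T$ then gives $C_T(\mu_0,\mu_T)=T\int_M\ham(x,d\f)\,d\mu$, so $\frac{K}{T}C_T(\mu_0,\mu_T)$ is precisely the right-hand side of Part (2) with $m=2$ (since $(2m-2)/m=1$), and Part (3) follows at once. The main technical point throughout is the construction in Part (1): one must carefully prescribe the $2$-jet of $\f$ so that the Hamiltonian Hessian vanishes at $(\x_0,\p)$, and then exploit the matrix Riccati equation (\ref{Riccati}) to ensure that $|\hess_\ham u(t,\varphi_t(\x))|^2$ stays uniformly small on a whole neighborhood of $(0,\x_0)$ in $[0,T']\times M$; once this uniform smallness and the continuity of $\R^t$, $v$ and $\K$ are in hand, the scaling in Part (2) and the reinterpretation in Part (3) are essentially mechanical.
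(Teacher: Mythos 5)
Your proposal is correct and follows essentially the same route as the paper's proof: prescribe the $2$-jet of $\f$ so that $\hess_\ham\f(\x_0,\p)=0$, invoke Proposition \ref{HJexist} for a short-time smooth HJ solution, use continuity to propagate the strict pointwise inequality to a neighborhood $[0,\tau]\times U$, integrate against a measure supported in $U$ via Lemma \ref{Fpp}, and then obtain Parts (2)--(3) by the scaling relations of Lemma \ref{lem} together with the degree-$2$ identity $c_T(x,\varphi_T(x))=T\,\ham(x,d\f_x)$. Your parenthetical appeal to the Riccati equation (\ref{Riccati}) for controlling $|\hess_\ham u|^2$ is harmless but unnecessary --- simple continuity of the smooth solution $u$ and its derivatives, as in the paper, already suffices.
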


\begin{proof}
Let $(\x,\p)$ be a point in $T^*M$ such that
\[
\tr(\R_{(\x,\p)})-\mathcal L^2_{\vec\ham}v(\x,\p)<\K(\x,\p).
\]
Let $\f$ be a function with compact support such that $\p=df_\x$ and $\hess_\ham \f(\x,\p)=0$. This is possible since it is equivalent to finding a function with prescribed first and second derivatives at a point. Let $u$ be a smooth solution of (\ref{HJE}) defined on $[0,T]\times M$. By Proposition \ref{HJexist}, $u$ exists if we assume that $T>0$ is small enough. By choosing a smaller $T$, we can find a neighborhood $\mathcal O$ of $\x$ such that
\[
|\hess_\ham u(t,\varphi_t(x))|^2+\tr(\R_{\Phi_t(x,d\f)})-\mathcal L^2_{\vec \ham}v(\Phi_t(x,d\f))<\K(\Phi_t(x,d\f))
\]
for all $0\leq t\leq T$ and all $x$ in $\mathcal O$.

Let $\mu$ be a probability measure supported in $\mathcal O$. It follows from Lemma \ref{Fpp} that
\[
\begin{split}
&\frac{d^2}{dt^2}\F(t,\mu_t)<\int_M\K(\Phi_t(\x,d\f)) d\mu(\x).
\end{split}
\]

Next, assume that the Hamiltonian $\ham$ is homogeneous in the $p$ variable. Let $u$ be a smooth solution of the equation (\ref{HJE}) on the time interval $[0,\tau]$ with initial condition $u(0,\cdot)=\f(\cdot)$. Then, by homogeneity of $\ham$, $\bar u(t,x):=(\tau/T)^{\frac{1}{m-1}} u(t\tau/T,x)$ is also a solution on the interval $[0,T]$ with initial condition $\bar u(0,x)=(\tau/T)^{\frac{1}{m-1}} \f(x)$. By Lemma \ref{lem}, it follows that
\[
\bar\varphi_t(x):=\pi(\Phi_t(\lambda^{\frac{1}{m-1}} d\f_x))=\pi(\Phi_{\lambda t}(d\f_x))=\varphi_{\lambda t}(x)
\]
where $\lambda=\tau/T$.

By Lemma \ref{lem}, we also have
\[
\tr\left(\R_{\Phi_t(\x,\lambda^{\frac{1}{m-1}} d\f)}\right)=\tr\left(\R_{(x(\lambda t),\lambda^{\frac{1}{m-1}} p(\lambda t))}\right)=\lambda^2\tr(\R_{\Phi_{\lambda t}(x,d\f)})
\]

By combining the above discussions and applying Lemma \ref{lem} again, it follows that
\[
\begin{split}
&|\hess_\ham \bar u(t,\bar \varphi_t(x))|^2+\tr\left(\R_{\Phi_t(x,\lambda^{\frac{1}{m-1}}d\f)}\right)-\mathcal L^2_{\vec \ham}v(\Phi_t(x,\lambda^{\frac{1}{m-1}}d\f))\\
&=\lambda^2\left(|\hess_\ham u(\lambda t,\varphi_{\lambda t}(x))|^2+\tr(\R_{\Phi_{\lambda t}(x,d\f)})-\mathcal L^2_{\vec \ham}v(\Phi_{\lambda t}(x,d\f))\right)\\
&<\lambda^2K\ham^{\frac{2m-2}{m}}(x,d\f)=K\ham^{\frac{2m-2}{m}}(x,\lambda^{\frac{1}{m-1}}d\f)
\end{split}
\]
for all $x$ in $\mathcal O$ and for all $t$ in $[0,T]$.

Therefore, if we let $\mu$ be a measure supported in $\mathcal O$ and let $\mu_t$ be the displacement interpolation defined by the potential $\lambda^{\frac{1}{m-1}}\f$, then
\[
\begin{split}
&\frac{d^2}{dt^2}\F(t,\mu_t)<K\int_M\ham^{\frac{2m-2}{m}}(x,\lambda^{\frac{1}{m-1}}d\f) d\mu(\x).
\end{split}
\]

If $m=2$, then $\ham(x,\lambda^{\frac{1}{m-1}}d\f)=\frac{1}{T}c_T(x,\bar\varphi_T(x))$ and so
\[
\begin{split}
&\frac{d^2}{dt^2}\F(t,\mu_t)<\frac{K}{T}C_T(\mu_0,\mu_T).
\end{split}
\]
\end{proof}

\begin{proof}[Proof of Corollary \ref{cor1}]
Since $b\equiv 0$ and $\cF(r)=\log r$ in this case, Theorem \ref{main} reduces to
\[
\begin{split}
&\frac{d^2}{dt^2}\F(t,\mu_t)\geq \int_M\Big(\tr(\R^{t}_{\Phi_t(\x,d\f)})-\frac{d^2}{dt^2}v(t,\Phi_t(\x,d\f))\Big) d\mu(\x).
\end{split}
\]

Since $\m_t=e^{-\mathfrak U}\vol$, where $\vol$ is the Riemannian volume form, it follows from the proof of Proposition \ref{mechanicalcurv} that $\tr\R^{t}_{(x,p)}=\ric_x(p,p)$ and $v(t,x,p)=-\mathfrak U(x)$.

Therefore, we obtain
\[
\frac{d^2}{dt^2}v(t,\Phi_t(x,p))=-\frac{d^2}{dt^2}\mathfrak U(\varphi_t(x))=-\hess \mathfrak U(\varphi_t(x)).
\]

Let $\exp$ be the Riemannian exponential map and note that
\[
\exp(T\nabla\f(\x))=\pi(\Phi_T(d\f_\x))=\varphi_T(\x).
\]
It follows that
\[
d(\x,\varphi_T(\x))=T|\nabla \f_\x|.
\]

By the above considerations and (\ref{BE}), we have
\[
\begin{split}
\frac{d^2}{dt^2}\F(t,\mu_t)&\geq K\int_M |\Phi_t(\x,d\f)|^2d\mu(\x)\\
&=\frac{K}{T^2}\int_M d^2(\x,\varphi_T(\x))d\mu(\x)\\
&=\frac{2K}{T}C_T(\mu,\nu)\\
\end{split}
\]
as claimed.

The converse follows from Theorem \ref{sidethm}.
\end{proof}

\smallskip

\section{Proof of Corollary \ref{main2} and \ref{main4}}

This section is devoted to the proof of Corollary \ref{main2} and \ref{main4}.

\begin{proof}[Proof of Corollary \ref{main2}]
From the proof of Theorem \ref{curvtime}, we see that $\pi^*\vol_{(x,p)}(f_1^{t,t},...,f_n^{t,t})=1$. Assume $k=k(t)$ depends only on $t$ and $u$ satisfies the Hamilton-Jacobi equation $\dot u+\frac{1}{2}|\nabla u|^2+U=0$ with initial condition $u_{t_0}=\textbf f$. Then
\begin{equation}\label{dv}
\begin{split}
&\frac{d}{dt}v(t,\Phi_{t}(\x,d\f))\\
&=\frac{d}{dt}\left(k(t)u(t,\varphi_t(\x))\right)\\
&=k(t)\dot u(t,\varphi_t(\x))+k(t)\left<\nabla u(t,\varphi_t(\x)),\dot\varphi_t(\x)\right>+\dot k(t)u(t,\varphi_t(\x))\\
&=k(t)\left(\frac{1}{2}|\nabla u(t,\varphi_t(\x))|^2_{\varphi_t(\x)}-U(t,\varphi_t(\x))\right)+\dot k(t)u(t,\varphi_t(\x)).
\end{split}
\end{equation}
and
\begin{equation}\label{ddv}
\begin{split}
&\frac{d^2}{dt^2}v(t,\Phi_{t}(\x,d\f))=\frac{d^2}{dt^2}\left(k(t)u(t,\varphi_t(\x))\right)\\
&=-k(t)\Big(\dot U(t,\varphi_t(\x))+2\left<\nabla U(t,\varphi_t(\x)),\nabla u(t,\varphi_t(\x))\right>\\
&+\frac{c_1(t)}{2}\ric(\nabla u(t,\varphi_t(\x)),\nabla u(t,\varphi_t(\x)))+\frac{c_2(t)}{2}|\nabla u(t,\varphi_t(\x))|^2\Big)\\
&+ 2\dot k(t)\left(\frac{1}{2}|\nabla u(t,\varphi_t(\x))|^2-U(t,\varphi_t(\x))\right)+\ddot k(t)u(t,\varphi_t(\x)).
\end{split}
\end{equation}

If we compare (\ref{ddv}) and Theorem \ref{curvtime}, then we can get rid of the term $\ric(\nabla u,\nabla u)$ by setting $c_1k=-2$ and get rid of the term $\left<\nabla R,\nabla u\right>$ in Theorem \ref{curvtime} by setting $U=-\frac{1}{2k^2}R$. Therefore, (\ref{ddv}), (\ref{dotR}), and Theorem \ref{curvtime} together gives
\begin{equation}\label{almost}
\begin{split}
&\tr\,(\R_{(\x,\p)}^t)-\frac{d^2}{dt^2}v(t,\Phi_t(\x,d\f))=-\frac{n\dot c_2}{2}-\frac{nc_2^2}{4}-\ddot k(t)u(t,\varphi_t(\x))\\
&+\left(c_2(t)k(t)-2\dot k(t)\right)\left(\frac{1}{2}|\nabla u(t,\varphi_t(\x))|^2+\frac{1}{2k(t)^2}R(t,\varphi_t(\x))\right)
\end{split}
\end{equation}
and
\begin{equation}\label{dv2}
\begin{split}
&\frac{d}{dt}v(t,\Phi_{t}(\x,d\f))=\dot k(t)u(t,\varphi_t(\x))\\
&+k(t)\left(\frac{1}{2}|\nabla u(t,\varphi_t(\x))|^2+\frac{1}{2k(t)^2}R(t,\varphi_t(\x))\right).
\end{split}
\end{equation}

Next, we compare (\ref{almost}) and (\ref{dv2}). We get rid of the term $u(t,\varphi_t(\x))$ in both expressions by choosing $\ddot k+b\dot k=0$ and we obtain
\[
\begin{split}
&\tr\,(\R_{(\x,\p)}^{t})-\frac{d^2}{dt^2}v(t,\Phi_{t}(\x,d\f))-b(t)\frac{d}{dt}v(t,\Phi_{t}(\x,d\f))=-\frac{n\dot c_2}{2}-\frac{nc_2^2}{4}\\
&+\left(c_2(t)k(t)-2\dot k(t)-b(t)k(t)\right)\left(\frac{1}{2}|\nabla u(t,\varphi_t(\x))|^2+\frac{1}{2k(t)^2}R(t,\varphi_t(\x))\right).
\end{split}
\]

Finally, if we choose $c_2(t)k(t)-2\dot k(t)-b(t)k(t)=0$, then
\[
\begin{split}
&\tr\,(\R_{(\x,\p)}^{t})-\frac{d^2}{dt^2}v(t,\Phi_{t}(\x,d\f))-b(t)\frac{d}{dt}v(t,\Phi_{t}(\x,d\f))=-\frac{n\dot c_2}{2}-\frac{nc_2^2}{4}.
\end{split}
\]
\end{proof}

\begin{proof}[Proof of Corollary \ref{main4}]
Let $\bar g$ be a solution of the Ricci flow $\dot{\bar g}=2\ric$ and let $g=\sqrt t\,\bar g$ (Note that the Ricci curvatures of $g$ and $\bar g$ are the same). Let $\vol$ and $\overline\vol$ be the volume forms of $g$ and $\bar g$, respectively.

The functional $\bar\F$ is defined by
\[
\begin{split}
\bar\F(t,\mu)&=\int_M(\log\bar\rho+t^{-1/2} u)\,d\mu+\frac{n}{2}\log t\\
&=\int_M\log(\bar\rho\, e^{t^{-1/2} u})d\mu+\frac{n}{2}\log t,
\end{split}
\]
where $\mu=\bar\rho\,\overline{\vol}$.

Since $\vol=t^{n/4}\overline\vol$, we have
\[
\rho\,e^{-t^{-1/2}u}\vol=\rho\m_t=\mu=\bar\rho\,\overline{\vol}=t^{-n/4}\bar\rho\vol.
\]

It follows that
\[
\begin{split}
\bar\F(t,\mu)&=\int_M\log(t^{n/4}\,\rho)d\mu+\frac{n}{2}\log t\\
&=\F(t,\mu)+\frac{3n}{4}\log t
\end{split}
\]

Let $m=-\frac{1}{2}$ and let $C=-1$. Then Theorem \ref{main3} gives
\[
\begin{split}
&\frac{d^2}{dt^2}\bar\F(t,\mu_t)+\frac{3}{2t}\frac{d}{dt}\bar\F(t,\mu_t)\\
&=\frac{d^2}{dt^2}\F(t,\mu_t)+\frac{3}{2t}\frac{d}{dt}\F(t,\mu_t)+\frac{3n}{8t^2}\geq 0.
\end{split}
\]
\end{proof}

\smallskip

\section{Proof of Theorem \ref{newmain1}}

\begin{proof}[Proof of Theorem \ref{newmain1}]
We use the notations in Lemma \ref{Fpp}. By Lemma \ref{Fpp}, we have
\[
\frac{d}{dt}\F(t,\mu_t)=-\int_M q (r^t_\x)^q\left(\frac{d}{dt}v(t,\Phi_t(\x,d\f))+\tr(S(t))\right)d\mu(\x)
\]
and
\[
\begin{split}
&\frac{d^2}{dt^2}\F(t,\mu_t)=\int_M (r^t_\x)^{q}\Big[q^2\Big(\frac{d}{dt}v(t,\Phi_t(\x,d\f))+\tr(S(t))\Big)^2\\
&\quad +q \Big(\tr(S(t)^2)+\tr(\R^{t}_{\Phi_{t}(\x,d\f)}) -\frac{d^2}{dt^2}v(t,\Phi_{t}(\x,d\f))\Big)\Big] d\mu(\x)\\
&\geq \int_M (r^t_\x)^{q}\Big[q^2b_1(t)\Big(\frac{d}{dt}v(t,\Phi_t(\x,d\f))+\tr(S(t))\Big)-\frac{q^2b_1(t)^2}{4}\\
&\quad +q \Big(b_2(t)\tr(S(t))-\frac{nb_2(t)^2}{4}+\tr(\R^{t}_{\Phi_{t}(\x,d\f)})\\
 &\quad -\frac{d^2}{dt^2}v(t,\Phi_t(\x,d\f))\Big)\Big] d\mu(\x).
\end{split}
\]

Therefore,
\[
\begin{split}
&\frac{d^2}{dt^2}\F(t,\mu_t)+(qb_1(t)+b_2(t))\frac{d}{dt}\F(t,\mu_t)\\
&\geq \int_M q(r^t_\x)^{q}\Big[\tr(\R^{t}_{\Phi_{t}(\x,d\f)})-b_2(t)\frac{d}{dt}v(t,\Phi_{t}(\x,d\f))\\
&\quad -\frac{d^2}{dt^2}v(t,\Phi_{t}(\x,d\f))-\frac{qb_1(t)^2}{4}-\frac{nb_2(t)^2}{4}\Big] d\mu(\x).
\end{split}
\]
\end{proof}

\begin{proof}[Proof of Corollary \ref{newmain2}]
The way to find the appropriate unknown functions (for instance $c_1$, $c_2$, $U$) is in the proof of Corollary \ref{main2}. Here we simply prove the result directly. But this follows immediately from (\ref{almost}), (\ref{dv2}), and Theorem \ref{newmain1} since
\[
\begin{split}
&\tr(\R^{t}_{\Phi_{t}(\x,d\f)})-b_2(t)\frac{d}{dt}v(t,\Phi_t(\x,d\f)) -\frac{d^2}{dt^2}v(t,\Phi_t(\x,d\f))\\
&=-\frac{n\dot c_2(t)}{2}-\frac{nc_2(t)^2}{4}.
\end{split}
\]
\end{proof}

\smallskip

\section{The Case with Drift}

In this section, we consider the following Hamiltonian
\begin{equation}\label{timeHamdrift}
\ham(t,x,p)=\frac{1}{2}\sum_{i,j}g^{ij}(t,x)p_ip_j+\sum_{i,j}g^{ij}p_iW_{x_j}(t,x)+U(t,x)
\end{equation}
where $g$ is time-dependent Riemannian metric, $U$ and $W$ are time-dependent potentials on the manifold $M$.

\begin{thm}\label{curvtimedrift}
The Hessian $\hess_\ham \f$ and the curvature $\R$ of the Hamiltonian system $\vec\ham$ defined by (\ref{timeHam}) satisfy
\[
\hess_\ham \f(t,\x)=\hess\f(t,\x)+\hess W(t,\x)+\frac{1}{2}\dot g(t,\x)
\]
and
\[
\begin{split}
\tr(\R_{(\x,\p)}^t)&=\ric(\p+\nabla W,\p+\nabla W)-\frac{1}{2}\Delta|\nabla W|^2-\Delta\dot W+\Delta U\\
&-\frac{1}{2}\tr(\ddot g)+\frac{1}{4}|\dot g|^2-\left<\nabla W+\p,\nabla(\tr(\dot g))-\diver(\dot g)\right>
\end{split}
\]
where $\ric$, $\nabla$, and $\Delta$ are the Ricci curvature, the gradient, and the Laplacian, respectively,  of the Riemannian metric $g(t,\cdot)$ at time $t$.
\end{thm}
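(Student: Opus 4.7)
The plan is to follow the proof of Theorem \ref{curvtime} step by step, applying Proposition \ref{Rformula} in geodesic normal coordinates centered at $\x$ for the metric $g(t,\cdot)$, and tracking the additional contributions produced by the drift term $\sum_{i,j}g^{ij}p_iW_{x_j}$. A useful organizing observation is that $\ham=\tfrac{1}{2}|p+\nabla W|_g^{2}+U-\tfrac{1}{2}|\nabla W|_g^{2}$, so one should expect the final formula to involve $\p+\nabla W$ in place of $\p$ and to receive a correction by a potential of the form $\dot W+\tfrac{1}{2}|\nabla W|^2-U$.

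First I would decompose $\vec\ham=\vec\ham_0+\vec A$ where $\ham_0$ is the Hamiltonian of Theorem \ref{curvtime} and $A(t,x,p)=p(\nabla W)$ is linear in $p$, and use the same vertical frame $V_i^t=\sum_j\sqrt g_{ij}\partial_{p_j}$. At $(\x,\p)$ a short bracket computation gives $[\vec A,V_i^t]=\sum_j W_{x_ix_j}\partial_{p_j}$ (the $g^{ij}_{x_k}$-contribution drops out in normal coordinates), so that
\[
\dot{\bar e}_i^{t,t}=-\partial_{x_i}+\sum_j\bigl(W_{x_ix_j}+\tfrac{1}{2}\dot g_{ij}\bigr)\partial_{p_j}.
\]
Reading off $d(d\f)_\x(-\partial_{x_i})$ immediately yields $\hess_\ham\f=\hess\f+\hess W+\tfrac{1}{2}\dot g$, and the symmetry of $W_{x_ix_j}+\tfrac12\dot g_{ij}$ forces $\Omega^{t,t}=0$, just as in the driftless case.

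The bulk of the work is then computing
\[
\ddot{\bar e}_i^{t,t}=[\vec\ham,[\vec\ham,V_i^t]]+2[\vec\ham,\dot V_i^t]+[\vec{\dot\ham},V_i^t]+\ddot V_i^t,
\]
extracting $\dot\Omega^{t,t}$, and applying Proposition \ref{Rformula}. The drift produces four new families of terms: cross brackets $[\vec A,[\vec\ham_0,V_i^t]]$ and $[\vec\ham_0,[\vec A,V_i^t]]$ giving expressions of the form $p_aW_{x_bx_c}$; the iterated bracket $[\vec A,[\vec A,V_i^t]]$ giving terms quadratic in derivatives of $W$; the piece $\Delta\dot W$ coming from $[\vec{\dot\ham},V_i^t]$; and interaction terms pairing $\dot g$ with $\nabla W$ in $[\vec\ham_0,\dot V_i^t]$ through the $\nabla W$ shift of the horizontal part of $\vec\ham$.

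The main obstacle is collapsing these many contributions into the stated compact form. Two identities drive the simplification. First, completing the square in the $\p$-quadratic contribution, combined with the cross and $W$-quadratic terms, produces precisely $\ric(\p+\nabla W,\p+\nabla W)$ in place of the $\ric(\p,\p)$ of Theorem \ref{curvtime}. Second, the Bochner identity
\[
\tfrac{1}{2}\Delta|\nabla W|^2=|\hess W|^2+\langle\nabla\Delta W,\nabla W\rangle+\ric(\nabla W,\nabla W)
\]
is needed to consolidate the remaining $W$-derivative terms into $-\tfrac12\Delta|\nabla W|^2-\Delta\dot W+\Delta U$, after interchanging $\Delta$ with $\partial_t$. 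The $\dot g$-dependent terms are then read off exactly as in Theorem \ref{curvtime}, but with $\p$ replaced by $\p+\nabla W$ in the first-order pieces $\diver(\dot g)$ and $\nabla(\tr\dot g)$, yielding the final expression.
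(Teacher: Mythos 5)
Your proposal is correct and follows essentially the same approach as the paper: a direct computation via Proposition \ref{Rformula} in geodesic normal coordinates at $(\x,\p)$, reading off $\hess_\ham\f$ from $\dot{\bar e}_i^{t,t}=-\partial_{x_i}+\sum_j(W_{x_ix_j}+\tfrac12\dot g_{ij})\partial_{p_j}$, noting $\Omega^{t,t}=0$, assembling $\ddot{\bar e}_i^{t,t}$, and then consolidating the trace by completing the square $\ric(\p,\p)+2\ric(\p,\nabla W)+\ric(\nabla W,\nabla W)=\ric(\p+\nabla W,\p+\nabla W)$ and invoking the Bochner identity $\tfrac12\Delta|\nabla W|^2=|\hess W|^2+\langle\nabla\Delta W,\nabla W\rangle+\ric(\nabla W,\nabla W)$, exactly as the paper's final two displayed equalities do. The only cosmetic difference is that you organize the bracket bookkeeping through the splitting $\vec\ham=\vec\ham_0+\vec A$, whereas the paper writes $\vec\ham$ out in full and expands directly; both routes produce the same intermediate terms and the same cancellations.
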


\smallskip

\begin{proof}[Proof of Theorem \ref{curvtimedrift}]
A computation shows that
\[
\begin{split}
\vec\ham &=\sum_{i,j}g^{ij}(p_j+W_{x_j})\partial_{x_i}\\
&-\sum_k\left(\frac{1}{2}\sum_{i,j}g^{ij}_{x_k}p_ip_j+\sum_{i,j}g^{ij}_{x_k}p_iW_{x_j}+\sum_{i,j}g^{ij}p_iW_{x_jx_k}+U_{x_k}\right)\partial_{p_k}
\end{split}
\]
and
\begin{equation}\label{brackdriftpro}
\begin{split}
[\vec \ham,\sum_i a_i^t(x)\partial_{p_i}]&=\sum_{i,j,l}g^{lj}\left(p_j+W_{x_j}\right)(a_i^t)_{x_l}\partial_{p_i} -\sum_{j,l}g^{lj}a_j^t\partial_{x_l}\\
&+\sum_{j,k,l}\left(g^{lj}_{x_k}p_j+g^{lj}_{x_k}W_{x_j}+g^{lj}W_{x_jx_k}\right)a_l^t\partial_{p_k}.
\end{split}
\end{equation}
Therefore, the following equation holds at $(\x,\p)$
\begin{equation}\label{brackdrift}
\begin{split}
[\vec \ham,\sum_i a_i^t(x)\partial_{p_i}]&=\sum_{i,l}(W_{x_l}+p_l)(a_i^t)_{x_l}\partial_{p_i}\\
&-\sum_l a_l^t\partial_{x_l}+\sum_{k,l}W_{x_lx_k}a_l^t\partial_{p_k}.
\end{split}
\end{equation}

Let $V_i^t=\sum_j\sqrt{g}_{ij}\partial_{p_j}$ and let $\bar e^{t,s}_i=(\Phi_{t,s})^*V_i^t$. Then clearly $\bar e^{t,t}_i=\partial_{p_i}$ at $(\x,\p)$. Since $\dot{\sqrt g}_{ij}=\frac{1}{2}\dot g_{ij}$ at $\x$, it follows from (\ref{brackdrift}) that
\[
\bar f_i^{t,t}:=\dot{\bar e}^{t,t}_i=[\vec\ham,V_i^t]+\dot V_i^t=-\partial_{x_i}+\sum_j\left(\frac{1}{2}\dot g_{ij}+W_{x_ix_j}\right)\partial_{p_j}
\]
at $(\x,\p)$.

It follows that
\[
\begin{split}
&d(d\f)_\x(-\partial_{x_i})=-\sum_j f_{x_ix_j}\partial_{p_j}-\partial_{x_i}\\
&=-\sum_j\left(f_{x_ix_j}+\frac{1}{2}\dot g_{ij}+W_{x_ix_j}\right)\partial_{p_j}-\partial_{x_i}+\sum_j\left(\frac{1}{2}\dot g_{ij}+W_{x_ix_j}\right)\partial_{p_j}
\end{split}
\]
and so
\[
\hess_\ham\f=\hess\f+\frac{1}{2}\dot g+\nabla^2 W.
\]

Since $\dot g+\nabla^2W$ is symmetric, it follows that
\[
\Omega_{ij}^{t,t}=\omega(\bar f_i^{t,t},\bar f_j^{t,t})=0.
\]

It also follows from (\ref{brackdrift}) that
\[
[\vec\ham,\dot V_i^t]=\frac{1}{2}\sum_{l,m}\left((W_{x_l}+p_l)(\dot g_{im})_{x_l}+W_{x_lx_m}\dot g_{il}\right)\partial_{p_m}-\sum_m\frac{1}{2}\dot g_{im}\partial_{x_m}
\]
at $\x$.

By a computation together with the facts that $\dot g_{ij}=-\dot g^{ij}$ and $(\dot g_{ij})_{x_k}=-(\dot g^{ij})_{x_k}$, we see that
\[
[\vec{\dot \ham}_t,V_i^t]=\sum_l\dot g_{il}\partial_{x_l}+\sum_k\left(\dot W_{x_ix_k}-\sum_j\dot g_{ij}W_{x_jx_k}-\sum_{l}(\dot g_{il})_{x_k}(p_l+W_{x_l})\right)\partial_{p_k}
\]
at $\x$.

Since $\ddot{\sqrt g}_{ik}=\frac{1}{2}\ddot g_{ik}-\sum_j\frac{1}{4}\dot g_{ij}\dot g_{jk}$ at $\x$, we have
\[
\ddot V_i^t=\sum_k\ddot{\sqrt g}_{ik}\partial_{p_k}=\sum_k\left(\frac{1}{2}\ddot g_{ik}-\frac{1}{4}\sum_j\dot g_{ij}\dot g_{jk}\right)\partial_{p_k}
\]
at $\x$.

A computation shows that
\[
\begin{split}
[\vec\ham,[\vec\ham,V_i^t]]&=\frac{1}{2}\sum_{j,k,l}\left(p_j+W_{x_j}\right)(p_k+W_{x_k})(g_{il})_{x_jx_k}\partial_{p_l}\\
&-\sum_{s,j,k}(p_s+W_{x_s})(g_{ij})_{x_kx_s}(p_j+W_{x_j})\partial_{p_k}\\
&+\sum_{j,l}W_{x_j}W_{x_ix_jx_l}\partial_{p_l}+\sum_{k,l}W_{x_ix_k}W_{x_kx_l}\partial_{p_l}\\
&+\sum_k\left(\frac{1}{2}\sum_{j,l}(g_{jl})_{x_kx_i}p_jp_l-\sum_{j,s}p_jg^{js}_{x_ix_k}W_{x_s}-U_{x_ix_k}\right)\partial_{p_k}
\end{split}
\]
at $(\x,\p)$.

By combining all of the above computations, we obtain
\[
\begin{split}
&\dot{\bar f}^{t,t}_i=\dot{\bar f}^{t,t}_i+\sum_j\Omega_{ij}^{t,t}\bar f_j^{t,t}\\
&=[\vec\ham,[\vec\ham,V_i^t]]+2[\vec\ham,\dot V_i^t]+\ddot V_i^t+[\vec{\dot\ham}_t,V_i^t]\\
&=\frac{1}{2}\sum_{j,k,l}\left(p_j+W_{x_j}\right)(p_k+W_{x_k})(g_{il})_{x_jx_k}\partial_{p_l}\\
&-\sum_{s,j,k}(p_s+W_{x_s})(g_{ij})_{x_kx_s}(p_j+W_{x_j})\partial_{p_k}\\
&+\sum_{j,l}W_{x_j}W_{x_ix_jx_l}\partial_{p_l}+\sum_{k,l}W_{x_ix_k}W_{x_kx_l}\partial_{p_l}\\
&+\sum_k\left(\frac{1}{2}\sum_{j,l}(g_{jl})_{x_kx_i}p_jp_l-\sum_{j,s}p_jg^{js}_{x_ix_k}W_{x_s}-U_{x_ix_k}\right)\partial_{p_k}\\
&+\sum_{l,m}\left((W_{x_l}+p_l)(\dot g_{im})_{x_l}\right)\partial_{p_m}\\
\end{split}
\]
\[
\begin{split}
&+\sum_k\left(\dot W_{x_ix_k}-\sum_{l}(\dot g_{il})_{x_k}(p_l+W_{x_l})\right)\partial_{p_k}\\
&+\sum_k\left(\frac{1}{2}\ddot g_{ik}-\frac{1}{4}\sum_j\dot g_{ij}\dot g_{jk}\right)\partial_{p_k}.
\end{split}
\]

Therefore, we obtain
\[
\begin{split}
-\tr(\R_{(\x,\p)}^t)&=\sum_{j,k,i}\frac{1}{2}p_jp_k(g_{ii})_{x_jx_k}+\sum_{j,k,i}\frac{1}{2}W_{x_j}W_{x_k}(g_{ii})_{x_jx_k}\\ &+\sum_{j,k,i}p_jW_{x_k}(g_{ii})_{x_jx_k}-\sum_{j,k,i}p_sp_j(g_{ij})_{x_ix_s}-\sum_{j,k,i}p_s(g_{ij})_{x_ix_s}W_{x_j}\\
&-\sum_{j,k,i}W_{x_s}(g_{ij})_{x_ix_s}p_j-\sum_{i,j,s}W_{x_s}W_{x_j}(g_{ij})_{x_ix_s}+\sum_{i,j}W_{x_j}W_{x_ix_jx_i}\\
&+\sum_{i,j}W_{x_ix_j}W_{x_jx_i}+\frac{1}{2}\sum_{i,j,l}(g_{jl})_{x_ix_i}p_jp_l+\sum_{i,j,s}p_j(g_{js})_{x_ix_i}W_{x_s}\\
&-\sum_i U_{x_ix_i}+\sum_{i,l}(W_{x_l}+p_l)((\dot g_{ii})_{x_l}-(\dot g_{il})_{x_i})\\
&+\sum_i\dot W_{x_ix_i}+\sum_i\left(\frac{1}{2}\ddot g_{ii}-\frac{1}{4}\sum_j\dot g_{ij}\dot g_{ji}\right)\\
&=-\ric(\p,\p)-2\ric(\p,\nabla W)\\
&+\left<\nabla\Delta W,\nabla W\right>+|\nabla^2 W|^2+\Delta\dot W-\Delta U\\
&+\frac{1}{2}\tr(\ddot g)-\frac{1}{4}|\dot g|^2+\left<\nabla W+\p,\nabla(\tr(\dot g))-\diver(\dot g)\right>\\
&=-\ric(\p+\nabla W,\p+\nabla W)+\frac{1}{2}\Delta|\nabla W|^2+\Delta\dot W-\Delta U\\
&+\frac{1}{2}\tr(\ddot g)-\frac{1}{4}|\dot g|^2+\left<\nabla W+\p,\nabla(\tr(\dot g))-\diver(\dot g)\right>
\end{split}
\]
as claimed.
\end{proof}

A computation very similar to that of Theorem \ref{Ricciflow} gives the following.

\begin{thm}\label{Ricciflow2}
Let us fix two smooth functions $c_1,c_2:\Real\to\Real$ of time $t$. Let $g$ be a smooth solution of the following equation
\begin{equation}\label{Ricci}
\dot g=c_1\ric+c_2g.
\end{equation}
Then
\[
\begin{split}
&\tr(\R_{(\x,\p)}^t)=\ric(\p+\nabla W,\p+\nabla W)-\frac{1}{2}\Delta|\nabla W|^2-\Delta\dot W+\Delta U\\
&-\frac{\dot c_1}{2}R-\frac{n\dot c_2}{2}-\frac{nc_2^2}{4}+\frac{c_1^2}{4}|\ric|^2+\frac{c_1^2}{4}\Delta R-\frac{c_1}{2}\left<\nabla W+\p,\nabla R\right>
\end{split}
\]
where $R$ is the scalar curvature of the metric $g$ at time $t$.
\end{thm}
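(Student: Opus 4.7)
The plan is to feed the Ricci-flow type relation $\dot g = c_1\ric + c_2 g$ into the formula for $\tr(\R^t_{(\x,\p)})$ provided by Theorem \ref{curvtimedrift} and then repeat the metric-evolution bookkeeping from the proof of Theorem \ref{Ricciflow}. The genuinely new features of Theorem \ref{curvtimedrift} relative to Theorem \ref{curvtime} are the drift-dependent summands $\ric(\p+\nabla W,\p+\nabla W)$, $-\tfrac12\Delta|\nabla W|^2$, $-\Delta\dot W$, and $\Delta U$, together with the fact that $\p$ is replaced by $\p+\nabla W$ in the contraction against $\nabla(\tr\dot g)-\diver(\dot g)$. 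The first four terms already appear on the right-hand side of the claim and pass through unchanged, while the contraction is handled by the same divergence identity as before.

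First I would carry over without change the four auxiliary consequences of $\dot g = c_1\ric + c_2 g$ derived in the proof of Theorem \ref{Ricciflow}: $\tr(\dot g) = c_1 R + c_2 n$; by the twice-contracted Bianchi identity, $\diver(\dot g) = \tfrac12 c_1\, dR$; the norm computation $|\dot g|^2 = c_1^2|\ric|^2 + 2c_1 c_2 R + n c_2^2$; and the evolution equation $\dot R = -\tfrac{c_1}{2}\Delta R - c_1|\ric|^2 - c_2 R$. Applying the first two to the contraction term in Theorem \ref{curvtimedrift} collapses $-\left<\nabla W+\p,\nabla(\tr\dot g)-\diver(\dot g)\right>$ into $-\tfrac{c_1}{2}\left<\nabla W+\p,\nabla R\right>$, which is precisely the last line of the claim.

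Next, the remaining contribution $-\tfrac12\tr(\ddot g) + \tfrac14|\dot g|^2$ is treated verbatim as in Theorem \ref{Ricciflow}. The key identity, coming from the time dependence of the metric used to contract the trace, is that $\tr(\ddot g) = \tfrac{d}{dt}\tr(\dot g) + |\dot g|^2 = \dot c_1 R + c_1\dot R + n\dot c_2 + |\dot g|^2$; substituting the expression for $\dot R$ and the expression for $|\dot g|^2$ reduces the combination to $-\tfrac{\dot c_1}{2}R - \tfrac{n\dot c_2}{2} - \tfrac{nc_2^2}{4} + \tfrac{c_1^2}{4}|\ric|^2 + \tfrac{c_1^2}{4}\Delta R$. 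Adding these scalars to the $W$- and $U$-dependent terms that Theorem \ref{curvtimedrift} supplies directly gives the full formula asserted in Theorem \ref{Ricciflow2}.

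There is no serious obstacle. The only point on which it is easy to slip is that, because the trace is taken with respect to the evolving metric $g(t,\cdot)$, $\tr(\ddot g)$ is not the second $t$-derivative of $\tr(\dot g)$ but differs from it by the term $|\dot g|^2$ produced by differentiating $g^{ij}$; once this bookkeeping is in place the calculation is fully parallel to that of Theorem \ref{Ricciflow}.
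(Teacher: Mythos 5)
Your proposal is correct and follows exactly the route the paper intends: the paper itself proves Theorem \ref{Ricciflow2} by remarking that it is ``a computation very similar to that of Theorem \ref{Ricciflow},'' and you have supplied precisely those details, substituting the identities $\tr(\dot g)=c_1R+c_2n$, $\diver(\dot g)=\tfrac12 c_1\,dR$, $|\dot g|^2=c_1^2|\ric|^2+2c_1c_2R+nc_2^2$, and $\dot R=-\tfrac{c_1}{2}\Delta R-c_1|\ric|^2-c_2R$ into the output of Theorem \ref{curvtimedrift}. Your closing caveat that $\tr(\ddot g)=\frac{d}{dt}\tr(\dot g)+|\dot g|^2$ (because $\dot g^{ij}\dot g_{ij}=-|\dot g|^2$) is the one bookkeeping point the paper leaves implicit, and you have it right.
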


Theorem \ref{neweg} follows immediately from Theorem \ref{main} and Theorem \ref{Ricciflow2}.

\smallskip

\section{Finsler manifolds}\label{finsler}

In this section, we discuss the case where the Hamiltonian is induced by a Finsler metric. More precisely, let $F$ be a Finsler metric defined on the tangent bundle $TM$ (i.e. $F(x,v)$ is smooth outside the zero section, positively homogeneous of degree 1, and $F^2$ is strictly convex in $v$). Let $\lag$ be the Lagrangian defined by
\[
\lag(x,v)=\frac{1}{2}F(x,v)^2
\]
and let $\ham$ be the corresponding Hamiltonian as before. We are going to show that the curvature of this Hamiltonian and the Riemann curvature of the Finsler manifold coincide up to an identification of the tangent and the cotangent bundle.

First, let us recall the definition of the Riemann curvature. Here we only give a very brief discussion. The detail can be found, for instance, in \cite{ChSh}. Let $(x_1,...,x_n,v_1,...,v_n)$ be local coordinates around a point $(\x,\ve)$ in the tangent bundle $TM$. The map $\leg:TM\to T^*M$ defined by
\[
\leg(\x,\ve)(\w)=\frac{d}{dt}\lag(\x,\ve+t\w)\Big|_{t=0}
\]
is a diffeomorphism between $TM$ and $T^*M$. It also induces local coordinates $(x_1,...,x_n,p_1,...,p_n)$ on $T^*M$ around the point $(\x,\leg(\x,\ve))$. More precisely, we have $p_i=\lag_{v_i}(x,v)$. By differentiating
\begin{equation}\label{LH}
\sum_jp_jv_j=\ham(x,p)+\lag(x,v),
\end{equation}
we obtain
\[
\sum_k\ham_{p_ip_k}(x,p)\lag_{v_kv_j}(x,v)=\delta_{ij}.
\]
Here and for the rest of this section,  we can consider $p_i$ as a function of $v_1,...,v_n$ or $v_i$ as a function of $p_1,...,p_n$ via the map $\leg$.

Let
\[
G^i=\frac{1}{2}\sum_l\ham_{p_ip_l}\left(\sum_k\lag_{x_kv_l}v^k-\lag_{x_l}\right)\text{ and } N_i^j=\partial_{v_i}G^j.
\]
The functions $N_i^j$ are the connection coefficients of a Ehresmann connection of the bundle $T(TM)$. In other words, $N_i^j$ defines a sub-bundle $HTM$ of $T(TM)$ which is spanned by the vectors
\[
\partial_{x_i}-\sum_j N_i^j\partial_{v_j}, \quad i=1,...,n
\]
and we have $T(TM)=HTM\oplus VTM$, where $VTM=\{V|d\pi(V)=0\}$ and $\pi:TM\to M$ is the natural projection. This splitting is essentially the, so called, Chern connection (see \cite{ChSh} for the detail).

\begin{lem}
The horizontal bundle $\hor$ of the Hamiltonian system $\vec\ham$ and $HTM$ defined above are related by
\[
\leg_*(HTM)=\hor.
\]
\end{lem}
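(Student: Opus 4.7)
The plan is to verify the lemma by a direct coordinate computation. In local coordinates $(x_i,v_i)$ on $TM$ with corresponding coordinates $(x_i,p_i)$ on $T^*M$ under the relation $p_k=\lag_{v_k}(x,v)$, the differential of the Legendre transform is
\[
\leg_*(\partial_{x_i})=\partial_{x_i}+\sum_k\lag_{v_kx_i}(x,v)\,\partial_{p_k},\qquad \leg_*(\partial_{v_j})=\sum_k\lag_{v_kv_j}(x,v)\,\partial_{p_k}.
\]
Applying $\leg_*$ to the horizontal vector $\partial_{x_i}-\sum_j N_i^j\partial_{v_j}$ produces a vector of the form $\partial_{x_i}+\sum_k\widetilde c_{ik}\partial_{p_k}$ with coefficients
\[
\widetilde c_{ik}=\lag_{v_kx_i}-\sum_j N_i^j\,\lag_{v_kv_j}.
\]
So the claim amounts to showing that this $\widetilde c_{ik}$ coincides with the horizontal coefficient $c_{ik}$ singled out in the proof of Lemma \ref{lem}.

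Toward this, I would first collect the standard Legendre duality identities derived from $\sum_j p_jv_j=\ham(x,p)+\lag(x,v)$, namely
\[
v_k=\ham_{p_k},\qquad \ham_{x_l}=-\lag_{x_l},\qquad \sum_m\ham_{p_kp_m}\lag_{v_mv_l}=\delta_{kl}.
\]
Since $\lag=\tfrac12 F^2$ is homogeneous of degree $2$ in $v$, Euler's identity yields $\sum_k\lag_{v_lx_k}v_k=2\lag_{x_l}$, which simplifies the spray coefficient to $G^j=-\tfrac12\sum_l\ham_{p_jp_l}\ham_{x_l}$, expressed entirely in Hamiltonian quantities at $(x,p)$. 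Differentiating in $v_i$ via the chain rule $\partial_{v_i}=\sum_m\lag_{v_mv_i}\partial_{p_m}$ expresses $N_i^j$ in terms of $\ham$ and its derivatives, and substituting into $\widetilde c_{ik}$ recasts $\widetilde c_{ik}$ purely in Hamiltonian language.

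The main obstacle is matching $\widetilde c_{ik}$ with the implicitly defined $c_{ik}$ satisfying
\[
\sum_{m,s}\ham_{p_ip_m}c_{ms}\ham_{p_sp_j}=\tfrac12\sum_k\bigl(\ham_{p_k}\ham_{p_ip_jx_k}-\ham_{x_k}\ham_{p_ip_jp_k}-\ham_{p_ix_k}\ham_{p_kp_j}-\ham_{p_ip_k}\ham_{x_kp_j}\bigr).
\]
The right-hand side is cubic in first and second derivatives of $\ham$ and linear in its third-order derivatives. To produce such third-order terms from $\widetilde c_{ik}$, I would differentiate the Legendre relation $\sum_m\ham_{p_kp_m}\lag_{v_mv_l}=\delta_{kl}$ once in $x_r$ and once in $v_r$, and also differentiate $\ham_{x_l}=-\lag_{x_l}$ to transfer $\lag_{v_kx_i}$ and $\partial_{v_i}(\ham_{p_jp_l}\ham_{x_l})$ into Hamiltonian derivatives of mixed order. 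The resulting expression is then verified, after careful symmetrization, to satisfy the displayed quadratic-in-$c$ identity. The essence of the argument, and the reason the equivalence is not immediately obvious, is that the homogeneity of $\lag$ collapses what would otherwise be the $\sum_k\lag_{x_kv_l}v_k$ term to $\lag_{x_l}$, and this is precisely what makes the Chern spray and the Agrachev horizontal distribution agree under $\leg$.
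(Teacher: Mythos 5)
Your high-level strategy---push the Chern horizontal frame $\partial_{x_i}-\sum_j N_i^j\partial_{v_j}$ forward under $\leg_*$, read off the coefficients $\widetilde c_{ik}$, and then verify that they satisfy Agrachev's quadratic identity for $c_{ik}$---is essentially the paper's computation run in the opposite direction, with the same Legendre duality identities supplying the dictionary. That part of the plan is sound.

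However, there is a concrete error that would derail the computation. The claimed Euler identity $\sum_k\lag_{v_lx_k}v_k=2\lag_{x_l}$ is false. Since $\lag_{x_l}$ is homogeneous of degree $2$ in $v$, Euler's formula gives $\sum_k v_k\,\lag_{x_l v_k}=2\lag_{x_l}$, where the \emph{velocity} index is contracted. In the spray coefficient $G^i=\tfrac12\sum_l\ham_{p_ip_l}\bigl(\sum_k\lag_{x_kv_l}v^k-\lag_{x_l}\bigr)$, it is the \emph{base} index $k$ of $\lag_{x_k v_l}$ that is contracted against $v^k$, and Euler does not apply. A check in the Riemannian case $\lag=\tfrac12 g_{ab}(x)v^av^b$ makes this explicit: $\sum_k\lag_{v_lx_k}v^k=(\partial_{x_k}g_{ml})v^mv^k$, whereas $2\lag_{x_l}=(\partial_{x_l}g_{ij})v^iv^j$, and these differ for a generic metric. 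Consequently the simplification $G^j=-\tfrac12\sum_l\ham_{p_jp_l}\ham_{x_l}$ is wrong: in the Riemannian case it yields $\tfrac14 g^{jl}(\partial_{x_l}g_{ab})v^av^b$, whereas the geodesic spray is $\tfrac12\Gamma^j_{ab}v^av^b=\tfrac14 g^{jl}(2\partial_a g_{lb}-\partial_l g_{ab})v^av^b$. The error then propagates through $N_i^j=\partial_{v_i}G^j$ and the final identification with $c_{ik}$ would not close. The ``essence of the argument'' you identify is therefore misattributed: no homogeneity collapse of the spray is available, and the paper makes no use of one. Instead one must carry the full expression for $G$ and use the complete collection of differentiated Legendre identities (including the third-order ones) to match against Agrachev's formula for $c_{ms}$, which is exactly what the paper's proof does. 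Dropping the spurious Euler step and redoing the substitution with the unreduced $G$ and $N$ should recover the result.
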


\begin{proof}
We start with the following formula of $c_{ij}$ which can be found in \cite{Ag}
\[
\begin{split}
&\sum_{m,s}\ham_{p_ip_m}c_{ms}\ham_{p_sp_j}\\
& =\frac{1}{2}\sum_{k}\left(\ham_{p_k}\ham_{p_ip_jx_k} -\ham_{x_k}\ham_{p_ip_jp_k}-\ham_{p_ix_k}\ham_{p_kp_j}-\ham_{p_ip_k}\ham_{x_kp_j}\right).
\end{split}
\]

By differentiating (\ref{LH}), we obtain
\begin{enumerate}
\item $\lag_{x_i}(x,v)+\ham_{x_i}(x,p)=0$,
\item $\ham_{p_ix_j}(x,p)+\sum_k\ham_{p_ip_k}(x,p)\lag_{v_kx_j}(x,v)=0$,
\item $\sum_k\ham_{p_ip_k}(x,p)\lag_{v_kv_j}(x,v)=\delta_{ij}$,
\item $\sum_k\ham_{p_ip_k}(x,p)\lag_{v_kv_jv_l}(x,v)\\+\sum_{k,s}\ham_{p_ip_kp_s}(x,p)\lag_{v_kv_j}(x,v)\lag_{v_sv_l}(x,v)=0$,
\item $\sum_k\ham_{p_ip_k}(x,p)\lag_{v_kv_jx_l}(x,v)+\sum_k\ham_{p_ip_kx_l}(x,p)\lag_{v_kv_j}(x,v)\\
+\sum_{k,s}\ham_{p_ip_kp_s}(x,p)\lag_{v_kv_j}(x,v)\lag_{v_sx_l}(x,v)=0$.
\end{enumerate}

It follows that
\[
\begin{split}
c_{ms}& =\frac{1}{2}\sum_{k,i,j}\Big(v_k\lag_{v_mv_i}\ham_{p_ip_jx_k}\lag_{v_sv_j}\\ &-\lag_{v_mv_i}\ham_{x_k}\ham_{p_ip_jp_k}\lag_{v_sv_j}-\lag_{v_mv_i}\ham_{p_ix_s}-\ham_{x_mp_j}\lag_{v_sv_j}\Big).
\end{split}
\]
Therefore,
\[
\begin{split}
&\sum_s c_{ms}\ham_{p_sp_l}\\
& =\frac{1}{2}\sum_{k,i,j}\left(\partial_{v_m}(\lag_{x_s}\ham_{p_sp_l} -v_k\lag_{v_ix_k}(x,v)\ham_{p_ip_l}(x,p))+2\lag_{x_mv_s}\ham_{p_sp_l}\right)\\
&=-\partial_{v_m}G^l+\sum_s \lag_{x_mv_s}\ham_{p_sp_l}=-\partial_{v_m}G^l-\ham_{p_lx_m}.
\end{split}
\]
Hence,
\[
\begin{split}
d\leg^{-1}(\partial_{x_i}+\sum_j c_{ij}\partial_{p_j})&=\partial_{x_i}+\sum_j \ham_{p_jx_i}\partial_{v_j}+\sum_{j,k} c_{ij}\ham_{p_jp_k}\partial_{v_k}\\
&=\partial_{x_i}-\sum_k N_i^k\partial_{v_k}
\end{split}
\]
and the claim follows.
\end{proof}

Next, we recall the definition of the Riemann curvature for a Finsler manifold. Let
\[
\begin{split}
&R^l_{ji}\partial_{v_l}=\left[\partial_{x_i}-N_i^k\partial_{v_k},\partial_{x_j}-N_j^l\partial_{v_l}\right]\\
&=\sum_l(\partial_{x_j}N_i^l-\partial_{x_i}N_j^l+N_i^k\partial_{v_k}N_j^l-N_j^k\partial_{v_k}N_i^l)\partial_{v_l}.
\end{split}
\]

The Riemann curvature $\mathcal R$ is defined by
\[
\mathcal R(\partial_{v_i})=\sum_{j,l}v_jR^l_{ij}\partial_{v_l}.
\]
The following shows that $\mathcal R$ and the curvature operator $\R$ of the Hamiltonian system $\vec\ham$ are essentially the same.

\begin{thm}
\[
\begin{split}
&\R_{(\x,\p)}(\leg_*\partial_{v_i})=\leg_*(\mathcal R(v_i)).
\end{split}
\]
\end{thm}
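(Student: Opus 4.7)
The plan is to transfer the bracket formula for $\R$ from $T^*M$ to $TM$ via the Legendre transform $\leg$, and then identify the resulting expression with the definition of the Finsler Riemann curvature $\mathcal R$. The preceding lemma is the crucial ingredient: together with the obvious correspondence of vertical bundles, it says that $\leg$ intertwines the splittings $T(T^*M) = \hor \oplus \ver$ and $T(TM) = HTM \oplus VTM$, so horizontal and vertical projections commute with $\leg_*$.

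First I would observe that, since $\lag = \tfrac{1}{2}F^2$ is positively homogeneous of degree $2$ in $v$, the Hamiltonian $\ham$ is positively homogeneous of degree $2$ in $p$. Differentiating the Legendre duality $\sum_j p_j v^j = \ham + \lag$ in $p_i$ yields $\ham_{p_i}(x,p) = v_i$. Combined with the horizontality of $\vec\ham$ in the homogeneous case, this shows that under $\leg$ the Hamiltonian vector field corresponds to the Finsler geodesic spray
\[
\mathcal G = \sum_i v^i \delta_i, \qquad \delta_i = \partial_{x_i} - \sum_k N_i^k \partial_{v_k}.
\]

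Second, I would invoke the bracket formula $\R_{(x,p)}(W) = -[\vec\ham,[\vec\ham,\widetilde W]_\hor]_\ver$ valid for vertical $W$ under the homogeneity assumption (this is the identity used in the proof of Lemma \ref{lem}, derivable from Proposition \ref{Rformula}). Applied to the vector field $W = \leg_*\partial_{v_i}$, whose $\leg^{-1}$-pullback to $TM$ is $\partial_{v_i}$ itself, the intertwining of splittings gives
\[
\leg^{-1}_*\,\R_{(\x,\p)}(\leg_*\partial_{v_i}) = -\bigl[\mathcal G,[\mathcal G,\partial_{v_i}]_{HTM}\bigr]_{VTM}.
\]

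Third, a short computation with Chern-connection brackets closes the argument. One has $[\mathcal G,\partial_{v_i}] = \sum_j v^j [\delta_j,\partial_{v_i}] - \delta_i$, and since $[\delta_j,\partial_{v_i}] = (\partial_{v_i}N_j^k)\partial_{v_k}$ is vertical, the horizontal part is $[\mathcal G,\partial_{v_i}]_{HTM} = -\delta_i$. Expanding the outer bracket,
\[
[\mathcal G,-\delta_i] = -\sum_j v^j [\delta_j,\delta_i] - \sum_j N_i^j\,\delta_j,
\]
since $\delta_i(v^j) = -N_i^j$. Using the formula $R^l_{ji}\partial_{v_l} = [\delta_i,\delta_j]$ from the text, the vertical component equals $-\sum_{j,l} v^j R^l_{ij}\partial_{v_l}$. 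Therefore
\[
-\bigl[\mathcal G,[\mathcal G,\partial_{v_i}]_{HTM}\bigr]_{VTM} = \sum_{j,l} v^j R^l_{ij}\partial_{v_l} = \mathcal R(\partial_{v_i}),
\]
which, pushed forward by $\leg$, is the desired identity.

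The main obstacle is really bookkeeping rather than substance: the bracket formula for $\R$ and the naturality of the splittings under $\leg$ are already in hand. What one must be careful about is that $\leg_*\partial_{v_i}$ and $\partial_{p_i}$ do not coincide (they differ by the matrix $\lag_{v_iv_j}$), so one should pick the vector field on $T^*M$ to be $\leg_*\partial_{v_i}$ from the outset in order to identify its $\leg$-pullback cleanly with $\partial_{v_i}$ and avoid extraneous factors of $\ham_{p_ip_j}$ appearing in the iterated brackets.
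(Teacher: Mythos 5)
Your proof is correct and takes essentially the same route as the paper's: invoke the bracket formula $\R(W)=-[\vec\ham,[\vec\ham,W]_\hor]_\ver$ for vertical $W$, use the preceding lemma to transport the horizontal/vertical splitting across $\leg$ and identify $\vec\ham$ with the Finsler spray $\mathcal G=\sum_j v^j\delta_j$, and then evaluate the iterated bracket to recover $\mathcal R(\partial_{v_i})$. You merely make explicit two intermediate steps (that $[\mathcal G,\partial_{v_i}]_{HTM}=-\delta_i$, and that $[\delta_j,\partial_{v_i}]$ is vertical) which the paper carries out in a single line, and your closing remark about distinguishing $\leg_*\partial_{v_i}$ from $\partial_{p_i}$ is a sensible caution, not a deviation.
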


\begin{proof}
Since $\ham$ is homogeneous, $\vec\ham$ is horizontal (see \cite{AgGa}). It follows that
\[
\vec\ham=\sum_i\ham_{p_i}(\partial_{x_i}+c_{ij}\partial_{p_j}).
\]
Therefore, we have
\[
\begin{split}
&\leg^*\left(\R_{(\x,\p)}(\leg_*\partial_{v_i})\right)=-\leg^*[\vec\ham,[\vec\ham,\leg_*\partial_{v_i}]_\hor]_\ver\\
&=-[\sum_jv_j(\partial_{x_j}-N_j^k\partial_{v_k}),[\sum_jv_j(\partial_{x_j}-N_j^k\partial_{v_k}),\partial_{v_i}]_{HTM}]_{VTM}\\
&=\sum_jv_j[\partial_{x_j}-N_j^k\partial_{v_k},\partial_{x_i}-N_i^k\partial_{v_k}]_{VTM}\\
&=\sum_{l,j}v_jR^l_{ij}\partial_{v_l}=\sum_l R^l_{i}\partial_{v_l}.
\end{split}
\]
Here the subscripts $HTM$ and $VTM$ denote the components of the vector.
\end{proof}

\bigskip

\end{document}